\newtheorem{theorem}{Theorem}[section]
\newtheorem{proposition}[theorem]{Proposition}
\newtheorem{lemma}[theorem]{Lemma}
\newtheorem{corollary}[theorem]{Corollary}
\theoremstyle{definition}
\newtheorem{definition}[theorem]{Definition}
\newtheorem{example}[theorem]{Example}
\newtheorem{remark}[theorem]{Remark}
\DeclareMathOperator{\powercone}{Pow}
\DeclareMathOperator{\indco}{indco}
\DeclareMathOperator{\conv}{conv}
\DeclareMathOperator{\cone}{co}
\DeclareMathOperator{\interior}{int}
\DeclareMathOperator{\relint}{ri}
\DeclareMathOperator{\supp}{supp}
\DeclareMathOperator{\cl}{cl}
\DeclareMathOperator{\range}{range}
\DeclareMathOperator{\rec}{rec}
\newcommand{\relentr}{D}
\newcommand{\e}{\mathrm{e}}
\newcommand{\N}{\mathbb{N}}
\newcommand{\Nsf}{\mathsf{N}}
\newcommand{\R}{\mathbb{R}}
\DeclareMathAlphabet{\mymathbb}{U}{BOONDOX-ds}{m}{n}
\newcommand{\zerob}{\mymathbb{0}}
\newcommand{\oneb}{\mathds{1}}
\newcommand{\cA}{\mathcal{A}}
\author{Riley Murray}
\address{Riley Murray:  University of California, Berkeley - Department of Electrical Engineering and Computer Sciences ({\sf{rjmurray@berkeley.edu}}).
\textit{Previously}, California Institute of Technology - Department of Computing and Mathematical Sciences}
\author{Helen Naumann}
\author{Thorsten Theobald}
\address{Helen Naumann, Thorsten Theobald:
	Goethe-Universit\"at, FB 12 -- Institut f\"ur Mathematik,
	Postfach 11 19 32, D--60054 Frankfurt am Main, Germany 
	({\sf{\{naumann,theobald\}@math.uni-frankfurt.de}})}
\subjclass[2010]{14P05, 90C23, 90C30 (primary), 05B35, 52A20 (secondary)}
\keywords{Sums of arithmetic-geometric exponentials, positive signomials, exponential sums, sums of nonnegative circuit polynomials (SONC), positive polynomials, multiplicative convexity, log convex sets.}
\title[]{Sublinear circuits and the constrained signomial nonnegativity problem}
\date{\today}
\begin{document}

\begin{abstract}
Conditional Sums-of-AM/GM-Exponentials (conditional SAGE) is a decomposition method to prove nonnegativity of a signomial or polynomial over some subset $X$ of real space.
    In this article, we undertake the first structural analysis of conditional SAGE signomials for convex sets $X$.
    We introduce the $X$-circuits of a finite subset $\cA \subset \R^n$, which generalize the simplicial circuits of the 
    affine-linear matroid induced by $\cA$ to a constrained setting. 
    The $X$-circuits serve as the main tool in our analysis and 
    exhibit particularly rich combinatorial properties for polyhedral $X$, in which case the set of $X$-circuits is comprised of one-dimensional cones of suitable polyhedral fans.
 
    The framework of $X$-circuits transparently reveals when an $X$-nonnegative conditional AM\slash GM-exponential can in fact be further decomposed as a sum of simpler $X$-nonnegative signomials.
    We develop a duality theory for $X$-circuits with connections to geometry of sets that are convex according to the geometric mean. This theory provides an optimal power cone reconstruction of conditional SAGE signomials when $X$ is polyhedral.
    In conjunction with a notion of reduced $X$-circuits, the duality theory facilitates a characterization of the
    extreme rays of conditional SAGE cones.
    
    Since signomials under logarithmic variable substitutions give polynomials, our results also have implications for
    nonnegative polynomials and polynomial optimization.
\end{abstract}

\maketitle

\section{Introduction}

Given a finite subset $\mathcal{A} \subset \R^n$, 
a signomial on $\mathcal{A}$ is a real-linear combination
\begin{equation}
\label{eq:signomial1}
f = \sum_{\alpha \in \cA} c_\alpha \e^{\alpha} \text{ of basis functions } \e^{\alpha}(x) \coloneqq \exp(\alpha^T x)
\end{equation}
with coefficients $c = (c_{\alpha})_{\alpha\in\cA}$ .
Signomials are a fundamental class of functions with applications in,
but not limited to, chemical reaction networks \cite{Mller2015,Mller2019}, aircraft design optimization \cite{Ozturk2019-sig-aero,York2018a-sig-aero}, and epidemiological process control \cite{Nowarzi2017-epidemic,Precadio2014-epidemic}.
We refer the reader to \cite{epr-2020} and its references for the manifold occurrences of signomials in pure and applied mathematics and to \cite[\S 1.1]{MurrayPhD} for an abridged history of signomial modeling which begins with geometric programming.
Signomials are often considered under a logarithmic change of variables $y \mapsto f(\log y) = \sum_{\alpha\in\cA}c_{\alpha}\prod_{i=1}^n y_i^{\alpha_i}$, so that for $\cA \subset \N^n$ one obtains polynomials over the positive orthant $\R^{n}_{++}$.

A basic question one might ask of a signomial is when the coefficients $c = (c_{\alpha})_{\alpha\in\cA}$ are such that $f$ is globally nonnegative.
Framing this question in terms of a signomial's coefficients affords direct connections to polynomials.
If the exponent vectors $\cA$ are contained in $\N^n$, then $f$ is nonnegative on $\R^n$ if and only if the polynomial $y \mapsto \sum_{\alpha\in\cA}c_{\alpha}\prod_{i=1}^n y_i^{\alpha_i}$ is nonnegative on the nonnegative orthant $\R^n_+$.
Deciding such nonnegativity problems is NP-hard in general \cite{Murty1987}.
However, several researchers have developed sufficient conditions for nonnegativity based on the AM/GM inequality.
In contrast to the well-known Sums-of-Squares nonnegativity certificates in the polynomial setting (see, e.g., \cite{lasserre-positive-polynomials,powers-woermann-1998}), the techniques based on the AM/GM inequality are not tied to the notion of a polynomial's degree, and hence apply to general signomials.
The earliest results here are due to Reznick \cite{reznick-1989}, with a resurgence marked by the works of Pantea, Koeppl, and Craciun \cite{Pantea2012}, Iliman and de Wolff \cite{iliman-dewolff-resmathsci}, and Chandrasekaran and Shah \cite{chandrasekaran-shah-2016}.
Whether considered for signomials or polynomials, such techniques have appealing forms of sparsity preservation in the proofs of nonnegativity \cite{mcw-2018,wang-nonneg}.

In this article, we are concerned with the question of when a signomial 
on exponents $\cA$ is \emph{$X$-nonnegative} (i.e., nonnegative on $X$) for a convex set $X$.
In important progress on this
question, Murray, Chandrasekaran and Wierman have proposed an
extension of the \textit{Sums-of-AM/GM-Exponentials} or \textit{SAGE} approach to global nonnegativity \cite{chandrasekaran-shah-2016}, which goes by the name \textit{conditional SAGE} \cite{mcw-2019}.
The method works as follows: if a signomial $f$ of the form~\eqref{eq:signomial1}
has at most one negative coefficient $c_{\beta}$, i.e.,
\[
  f \ = \ \sum_{\alpha \in \mathcal{A} \setminus \{\beta\}} c_{\alpha}  \e^{\alpha}
    + c_{\beta} \e^{\beta} \quad \text{ with } \quad c_{\alpha} \ge 0 \; 
    \text{ for all } \alpha \in \mathcal{A} \setminus \{\beta\}
\]
then we may divide out the corresponding basis function $\e^{\beta}$ to obtain a new signomial $g = \sum_{\alpha\in\cA}c_{\alpha}\e^{[\alpha-\beta]}$ without affecting nonnegativity.
Because $g$ is the sum of a signomial with all nonnegative coefficients (a \textit{posynomial}) and a constant, it is convex by construction, its $X$-nonnegativity can be decided by applying the principle of strong duality in convex optimization.
The outcome of this duality argument is that $f$ is $X$-nonnegative if and only if there exists a dual variable $\nu = (\nu_{\alpha})_{\alpha\in\cA}$ that satisfies a certain relative entropy inequality in $\nu$, $c$, and the support function of $X$ 
(see Proposition~\ref{pr:rel-entropy-cond} for a precise statement).
Thus, the $X$-nonnegativity of $f$ can be decided by convex \emph{relative entropy programming.}
The $X$-nonnegative signomials with at most one negative coefficient are called \textit{$X$-AGE}, and the signomials which decompose into a sum of such functions are called \textit{$X$-SAGE}. 
The recognition problem for $X$-SAGE signomials can likewise be decided by relative entropy programming.

The purpose of this article is to undertake the first structural analysis of the cones of $X$-SAGE signomials on exponents $\cA$, which we henceforth denote by $C_X(\cA)$.
At the outset of this research, our goals were to find counterparts to the many convex-combinatorial properties known for the unconstrained case $C_{\R^n}(\cA)$ \cite{FdW-2019,Katthaen:Naumann:Theobald:UnifiedFramework,mcw-2018}, and to understand conditional SAGE relative to techniques such as nonnegative circuit polynomials \cite{iliman-dewolff-resmathsci,Pantea2012,reznick-1989}.
Towards this end we have introduced an analysis tool of \emph{sublinear circuits} which we call \textit{the $X$-circuits of $\cA$}.
Our definition of these $X$-circuits (see Section \ref{sec:x_circuts_def}) centers on a local, orthant-wise, strict-sublinearity condition for the support function of $X$ composed with $\cA$.
This construction ensures that the special case of $\R^n$-circuits reduces to the simplicial circuits of the affine-linear matroid induced by $\cA$.

We demonstrate that analysis by $X$-circuits is extremely effective in describing many structural aspects of $X$-SAGE cones.
Our techniques are sufficiently robust that one can prove nearly every result in this manuscript assuming nothing of $X$ beyond convexity.
Some special treatment is given to the case when $X$ is polyhedral, as this reveals some striking interactions between discrete, convex, and so-called \textit{geometrically convex}  or \textit{multiplicatively convex} geometry (see Section \ref{sec:x_circuits_sage}).
In a broader sense, a selection of our results have consequences for numerical optimization, such as basis identification in optimization with SAGE certificates, and a procedure to simplify certain systems of power cone inequalities on the nonnegative orthant.

\subsection{Main contributions}\label{subsec:main_contrib}

We begin by introducing some limited notation.
The vector space of real $|\cA|$-tuples indexed by $\alpha\in\cA$ is denoted by $\R^{\cA}$.
The support function of a convex set $X$, denoted by $\sigma_X$, is the convex function defined by $\sigma_X(y) = \sup\{ y^T x : x \in X \}$.
We regard the exponent set $\cA \subset \R^n$ as a linear operator from $\R^{\cA}$ to $\R^n$ by $\cA \nu = \sum_{\alpha\in\cA}\alpha\nu_{\alpha}$; the corresponding adjoint is denoted $\cA^T$.
For concreteness, one might think of $\cA$ as a matrix with columns given by the exponents $\alpha$.
We continue to use $C_X(\cA)$ to denote the cone of $X$-SAGE signomials on $\cA$.
For each $\beta \in \cA$, we denote the corresponding cone of $X$-AGE functions by
\begin{equation}\label{eq:def_c_age}
    C_X(\cA,\beta) = \left\{ f \,:\, f =\sum_{\alpha\in\cA}c_{\alpha}\e^{\alpha} \text{ is } X\text{-nonnegative},~ c_{\setminus \beta} \geq \zerob \right\}
\end{equation}
where $c_{\setminus\beta}$ denotes the vector in $\R^{\cA \setminus\beta}$ formed by deleting $c_{\beta}$ from $c$.

The basic tools for our analysis are \textit{the $X$-circuits of $\cA$} (routinely abbreviated to \textit{$X$-circuits}).
We formulate the $X$-circuits of $\cA$ as nonzero vectors $\nu^\star \in \R^{\cA}$ at which the augmented support function $\nu\mapsto \sigma_X(-\cA \nu)$ exhibits a strict sublinearity condition (see Definition \ref{def:x_beta_circuit}).
We characterize $X$-circuits as generators of suitable convex cones in $\R^{\cA} \times \R$ and
usually focus on \textit{normalized} $X$-circuits $\lambda \in \R^{\cA}$, for which the nonnegative entries sum to unity.
Theorem \ref{thm:polyhedron_x_finite_circuits} shows that in the polyhedral case, $X$-circuits are exactly the generators of all one-dimensional elements of a suitable polyhedral fan.
A key consequence of Theorem \ref{thm:polyhedron_x_finite_circuits} is that when $X$ is a polyhedron, there are only finitely many normalized $X$-circuits.

Section \ref{sec:x_circuits_age_cones} uses the machinery of $X$-circuits to understand $X$-AGE cones.
First, we show that if a signomial generates an extreme ray of $C_X(\cA,\beta)$, then the dual variable $\nu$ which certifies its required relative entropy inequality must be an $X$-circuit (Theorem \ref{thm:x_circuits_age}).
Normalized $X$-circuits $\lambda$ are then associated to cones of \textit{$\lambda$-witnessed AGE functions} $C_X(\cA,\lambda)$.\footnote{When parsing $C_X(\cA,\beta)$ and $C_X(\cA,\lambda)$, the reader should note that $\beta$ and $\lambda$ live in different spaces.}
The functions in $C_X(\cA,\lambda)$ are $X$-nonnegative signomials admitting a nonnegativity certificate based on a damped power cone inequality in weights $\lambda$.
Theorem \ref{thm:primal_x_age_powercone} shows that every $X$-SAGE function can be written as a sum of $\lambda$-witnessed AGE functions for $X$-circuits $\lambda$.
In proving this, we formalize the connection between conditional SAGE and prior works for global nonnegativity \cite{iliman-dewolff-resmathsci,Pantea2012,reznick-1989}.
Theorem \ref{thm:primal_x_age_powercone} also motivates a basis identification technique where an approximate relative entropy certificate of $f \in C_X(\cA)$ may be refined by power cone programming.
Combining Theorems \ref{thm:polyhedron_x_finite_circuits} and \ref{thm:primal_x_age_powercone} yields a corollary that when $X$ is a polyhedron, cones of $X$-SAGE signomials are (in principle) power cone representable; this generalizes results by several authors in the unconstrained case \cite{averkov-2019,NaumannTheobald2020,papp-2019,wang2019}.

Section \ref{sec:x_circuits_sage} undertakes a thorough analysis of $C_X(\cA)$.
We begin by associating $X$-circuits $\lambda$ with affine functions $\phi_{\lambda} : \R^{\cA} \to \R$ given by $\phi_{\lambda}(y) = \sum_{\alpha\in\cA}y_{\alpha}\lambda_{\alpha} + \sigma_X(-\cA \lambda)$.
We define the \textit{circuit-generated cone} $G_X(\cA)$ as the smallest convex cone containing these functions and the constant function $y \mapsto 1$.
Upon embedding the affine functions on $\R^{\cA}$ into $\R^{\cA} \times \R$, Theorem \ref{thm:log_dual_x_sage_valid} provides the following identity between the dual SAGE cone $C_X(\cA)^*$ and the dual circuit-generated cone $G_X(\cA)^*$
\[
    C_X(\cA)^* = \cl\{ \exp y \,:\, (y,1) \in G_X(\cA)^* \}.
\]
\noindent Qualitatively, Theorem \ref{thm:log_dual_x_sage_valid} says $C_X(\cA)^*$ is not only convex in the classical sense, but also convex under a logarithmic transformation $S \mapsto \log S = \{ y : \exp y \in S \}$.
The property of a set being convex under this logarithmic transformation is known by various names, including \textit{log convexity} \cite{Agrawal2019}, \textit{geometric convexity} \cite{JarczykMatowski2002,Ozdemir2014}, or \textit{multiplicative convexity} \cite{Niculescu2000}.
This property has previously been considered in the literature on ordinary SAGE certificates (i.e., SAGE certificates for the special case $X=\R^n$) \cite{Katthaen:Naumann:Theobald:UnifiedFramework,mcw-2018}, but never in such a systematic way as in our analysis.
For example, in view of Theorem \ref{thm:log_dual_x_sage_valid} it becomes natural to consider $\Lambda_X^\star(\cA)$ -- \textit{the reduced $X$-circuits of $\cA$} -- as the normalized circuits $\lambda$ for which $\phi_{\lambda}$ generates an extreme ray of the circuit-generated cone.
The property of a circuit being ``reduced'' in this sense is highly restrictive, and yet (by Theorem \ref{thm:reduced_x_circuits}) we can construct $C_X(\cA)$ using only $\lambda$-witnessed AGE cones as $\lambda$ runs over $\Lambda_X^\star(\cA)$.
Finally, through a technical lemma (\ref{lem:exponentiate_separating_hyperplane}), we show how separating hyperplanes in the space of the dual circuit-generated cone may be mapped to separating hyperplanes in the exponentiated space of the dual SAGE cone.
This lemma has general applications in simplifying systems of certain power cone constraints on the nonnegative orthant; in our context, it serves as the basis for Theorem \ref{thm:polyhedra_reduced_x_circuits}, paraphrased below.
\begin{quote}
    \textit{If $X$ is a polyhedron and $C_X(\cA)$ consists of more than just posynomials, then} 
    \begin{equation*}
        C_X(\cA) = \sum_{\lambda \in \Lambda^\star_X(\cA)}  C_X(\cA,\lambda).
    \end{equation*}
    \textit{Moreover, there is no subset $\Lambda \subsetneq \Lambda_X^\star(\cA)$ for which $C_X(\cA) = \sum_{\lambda \in \Lambda} C_X(\cA,\lambda)$.}
\end{quote}
\noindent Theorem \ref{thm:polyhedra_reduced_x_circuits} provides the most efficient possible description of $C_X(\cA)$ in terms of power cone inequalities.
Its computational implications are addressed briefly in Section \ref{se:discussion}.

Throughout the article we illustrate key concepts with the half-line $X = [0,\infty)$.
Specifically, Example \ref{ex:circuits-univariate1} addresses the $[0,\infty)$-circuits of a generic point set $\cA \subset \R$, and Example \ref{ex:reduced_extreme_rays} covers the corresponding \textit{reduced} $[0,\infty)$-circuits.
This culminates with a complete characterization of the extreme rays of $C_X(\cA)$ for $X = [0,\infty)$ and $\cA \subset \R$ (Proposition \ref{prop:extremalsupportsconicdim1_alt}).

\subsection{Related work}\label{subsec:related_work}

Let us begin by introducing some basic concepts from discrete geometry.
The \textit{circuits} of the affine-linear matroid induced by $\cA$ are the nonzero vectors $\nu^\star \in \ker \cA \subset \R^{\cA}$ 
whose entries sum to zero, and whose supports are inclusion minimal among all vectors in $\ker \cA$ that sum to zero.
In the SAGE literature one is interested in \textit{simplicial circuits}.
These are the circuits $\nu^\star$ that, upon scaling by a suitable constant, have exactly one negative component.
The name \textit{simplicial} is used here because the convex hull of the support $\supp\nu^\star := \{\alpha \,:\, \nu^{\star}_{\alpha} \neq 0 \}$ forms a simplex (possibly of low dimension); exactly one element in $\supp\nu^\star$ is contained in the relative interior of this simplex.
These simplicial circuits are uniquely determined (up to scaling) by their supports.
It is therefore common to call a subset $A \subset \cA$ a simplicial circuit if its convex hull forms a simplex and has a relative interior containing exactly one element of $A$.

To situate conditional SAGE in the literature one should look to the close relatives of ordinary SAGE: the \textit{agiforms} of Reznick \cite{reznick-1989}, the \textit{monomial dominating posynomials} of Pantea, Koeppl and Craciun \cite{Pantea2012}, and the \textit{Sums-of-Nonnegative-Circuit} (SONC) polynomials of Iliman and de Wolff \cite{iliman-dewolff-resmathsci}.
The latter two works determined necessary and sufficient conditions for $\R^n_{+}$ and $\R^n$-nonnegativity of polynomials supported on a simplicial circuit, based on power cone inequalities in the polynomial's coefficients and circuit vector.
In our context, key developments in this area include Wang's discovery of conditions under which a SONC decomposition exists for a given polynomial \cite{wang-nonneg}, and Murray, Chandrasekaran, and Wierman's proof that the cone of SONC polynomials can be represented by a projection of a cone of SAGE signomials \cite[\S 5]{mcw-2018}.
From these results it is now understood that SONC and ordinary SAGE are equivalent to one another for purposes of certain structural analyses.
Our results show that the ``circuit number'' approach of SONC does not generalize to the $X$-nonnegativity problem in the same manner as SAGE.
However, it is possible to describe conditional SAGE in a way which is aesthetically similar to SONC via our $\lambda$-witnessed AGE cones.

To appreciate the structural results proven for $C_X(\cA)$ in this work, it is useful to mention some analogous results proven in the case $X = \R^n$.
As a signomial generalization of an earlier result by Reznick \cite{reznick-1989},
Murray, Chandrasekaran, and Wierman have shown that every signomial which generates an extreme ray of $C_{\R^n}(\cA)$ is supported on either a singleton or a simplicial circuit \cite{mcw-2018}.
Curiously, a given signomial $f$ can be extremal in $C_{\R^n}(\cA)$ for $\cA$ as the support of $f$, and yet nonextremal in $C_{\R^n}(\cA')$ for $\cA' \supsetneq \cA$.
To account for this, Katth\"an, Naumann, and Theobald introduced the concept of a \textit{reduced circuit}, which they used to obtain a complete characterization of the extreme rays of $C_{\R^n}(\cA)$  \cite{Katthaen:Naumann:Theobald:UnifiedFramework}.
Subsequently, Forsg{\aa}rd and de Wolff employed regular subdivisions, $A$-discriminants and tropical geometry to study how circuits affect the algebraic boundary of the signomial SAGE cone  \cite{FdW-2019}.
Our results include direct extensions of the above results by Murray et al.\ and Katth\"an et al.\ to the case of $X \subsetneq \R^n$.
For Forsg{\aa}rd and de Wolff's work, our 
\textit{circuit-generated cone} generalizes their \textit{Reznick cone}.

Now we turn to how SAGE can be used for optimization.
Given a signomial objective $f$ and a convex feasible set $X$, we have $\sup\{ \gamma \in \R : f - \gamma \text{ is } X\text{-SAGE}\} \leq \inf_{x \in X}f(x)$.
This procedure has been extended to a convex relaxation hierarchy for which A.\ Wang et al. have proven a completeness result \cite{wjyp-2020} (see also \cite{dickinson-povh}).
Very recently, additional SAGE-based hierarchies have been developed to approach a signomial's minimum from both above and below, including in the presence of nonconvex constraints \cite{dressler-murray-2021}.
Such techniques can be implemented using the \texttt{sageopt} python package and a reliable exponential cone solver such as MOSEK \cite{mosek,sageopt}.

On the polynomial optimization side, Karaca et al.\ developed a combined SAGE and Sums-of-Squares approach to optimization over (subsets of) the  nonnegative orthant \cite{karaca-2017}.
By consideration to the close SAGE-SONC relationship, one finds connections to works of Dressler et al.\ on polynomial optimization with SONC \cite{didW-2017,DKdW}.
As an alternative to SONC, one may work directly with a notion of \textit{SAGE polynomials} \cite[\S 5.1]{mcw-2018}.
The concept of SAGE polynomials is important because the corresponding nonnegativity certificates can be computed efficiently, and because they are transparently generalized to \textit{$X$-SAGE polynomials} \cite[\S 4]{mcw-2019}.
Our signomial results may be applied to conditional SAGE polynomials, however care must be taken in mapping between the two types of functions; see for example \cite[Theorems 1 and 2]{mcw-2019}.

\subsection{Some definitions and conventions}

Our terminology and notation for convex analysis is generally chosen to match that of Rockafellar \cite{rockafellar-book}.
Here we define terms and notation which are less commonly used or which differ from those of \cite{rockafellar-book}; additional standard definitions are reproduced in the appendix.
We abbreviate the line segment connecting $x$ and $y$ in $\R^n$ by $[x,y] = \{ \lambda x + (1-\lambda)y: 0 \leq \lambda \leq 1\}$.
A convex cone $K \subset \R^n$ is \emph{pointed} if it contains no lines.
A vector $v$ in a convex cone $K$ is called an \emph{edge generator} if $\{\lambda v \,:\, \lambda \geq 0\}$ is an extreme ray of $K$.
The \textit{polar} of a convex cone $K$ is $K^\circ = -K^*$, where $K^*$ is the dual cone to $K$.
The \textit{induced cone} of a convex set $S \subset \R^n$ is $\indco(S) \coloneqq \cl\{(s,\mu) \,:\, \mu > 0,\, s/\mu \in S\} \subset \R^{n+1}$, and the \textit{recession cone} is $\rec(S) \coloneqq \{ t \,:\, \exists s \in S \text{ such that } s + \lambda t \in S \; \forall\, \lambda \geq 0 \}$.

All logarithms are base-$e$, where $e$ is Euler's number.
We extend the scalar exponential function ``$\exp$'' to real vectors in an elementwise fashion.
The zero vector and vector of all ones (in appropriate spaces) are denoted $\zerob$ and $\oneb$ respectively.
The standard basis for $\R^{\cA}$ is denoted $\{\delta_{\alpha}\}_{\alpha\in\cA}$, and the support of a vector $c \in \R^{\cA}$ is $\supp c = \{\alpha:c_{\alpha} \neq 0 \}$.

\smallskip

\section*{Acknowledgements}

This work would not have been possible without an invitation from Bernd Sturmfels for R.M. to visit The Max Planck Institute for Mathematics in the Sciences (Leipzig, Germany) in late 2019.
R.M. was supported by an NSF Graduate Research Fellowship, and
T.T. was supported by DFG grant TH 1333/7-1. We thank the anonymous referees for their constructive feedback.

\section{Preliminaries}\label{sec:prelims}

Throughout this article, $X \subset \R^n$ is closed, convex, and nonempty, and the set $\cA \subset \R^n$ is nonempty and finite.
We only consider data $(\cA,X)$ where the functions $\{ \e^{\alpha} \}_{\alpha \in \cA}$ are linearly independent on $X$.
The purpose of this linear independence assumption is to ensure the $X$-nonnegativity cone does not contain a lineality space; equivalently, the assumption ensures the moment cone $\cone\{ \exp(\cA^T x) \in \R^{\cA} \,:\, x \in X\}$ is full-dimensional.

\begin{definition}\label{def:c_sage}
The \emph{$X$-SAGE cone with respect to the support $\mathcal{A}$}
is the Minkowski sum
\[
  C_X(\cA) \ = \ \sum_{\beta \in \cA} C_X(\cA,\beta),
\]
where $C_X(\cA,\beta)$ are the $X$-AGE cones defined in \eqref{eq:def_c_age}.
\end{definition}

\begin{remark}
    In this definition, all the signomials in the decomposition of the right hand
    side are also restricted to the support $\cA$. This is no loss of generality, 
    since any signomial $f$ on $\cA$, which is contained in
    $\sum_{\beta \in \mathcal{A'}} C_X(\cA',\beta)$ for some superset 
    $\mathcal{A}'$ of $\cA$, is also contained in 
    $\sum_{\beta \in \cA} C_X(\cA,\beta)$,
     see \cite[Corollary~1]{mcw-2019}.
\end{remark}

By adopting Definition \ref{def:c_sage}, it is clear that the problem of representing $C_X(\cA)$ reduces to the problem of representing the cones $C_X(\cA,\beta)$.
To state the representation of these cones we use the \textit{relative entropy function}
\[
    \relentr(\nu,c) = \sum_{\alpha \in \cA} \nu_\alpha \log\left(\frac{\nu_\alpha}{c_\alpha}\right).
\]
We use standard conventions where relative entropy is continuously extended to $\R^\cA_+ \times \R^\cA_+$, and define $D(\nu,c) = \infty$ if either $\nu$ or $c$ has a negative component.

\begin{proposition}[Theorem 1 of \cite{mcw-2019}]
\label{pr:rel-entropy-cond}
    A signomial $f = \sum_{\alpha\in \cA } c_\alpha \e^{\alpha}$ belongs to $C_X(\cA,\beta)$ if and only if there exists a vector $\nu \in \R^\cA$ that satisfies
    \begin{equation}
        \oneb^T\nu = 0 \quad \text{and} \quad \sigma_X(-\cA \nu) + \relentr(\nu_{\setminus \beta},e c_{\setminus \beta}) \leq c_\beta, \label{eq:relativentropycondition}
    \end{equation}
    where again, $\sigma_X(y) = \sup\{\, y^T x\, :\, x \in X\}$ for $y\in\R^n$. Such a vector $\nu$ is called a \emph{relative entropy certificate} for $f$.
\end{proposition}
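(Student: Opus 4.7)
The plan is to reduce $X$-nonnegativity of $f$ to the nonnegativity of a convex function on $X$, and then to apply Fenchel--Rockafellar duality to obtain the entropic certificate. Because $\e^{-\beta^T x}>0$ pointwise, $f$ is $X$-nonnegative if and only if
\[
  g(x) \ \coloneqq \ \e^{-\beta^T x} f(x) \ = \ \sum_{\alpha\in\cA\setminus\{\beta\}} c_\alpha\, \e^{(\alpha-\beta)^T x} + c_\beta
\]
is nonnegative on $X$, and the hypothesis $c_{\setminus\beta}\geq\zerob$ makes $g$ convex on all of $\R^n$. The proposition therefore becomes the statement that $\inf_{x\in X} g(x) \geq 0$ if and only if \eqref{eq:relativentropycondition} admits a feasible $\nu$.

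For the (if) direction I would apply the Fenchel--Young inequality for the convex function $t\mapsto c_\alpha\e^t$, whose conjugate on the nonnegative half-line is $\mu\mapsto\mu\log(\mu/(\e c_\alpha))$, giving
\[
  c_\alpha \e^{t} \ \geq \ \mu\, t - \mu\log\!\bigl(\mu/(\e c_\alpha)\bigr) \qquad (\mu \geq 0).
\]
Substituting $t=(\alpha-\beta)^T x$, $\mu=\nu_\alpha$, summing over $\alpha\in\cA\setminus\{\beta\}$, and setting $\nu_\beta := -\sum_{\alpha\neq\beta}\nu_\alpha$ so that $\oneb^T\nu=0$, the linear-in-$x$ contributions collapse to $(\cA \nu)^T x$. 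Adding $c_\beta$ and taking the infimum over $x\in X$ then yields
\[
  \inf_{x\in X} g(x)\ \geq\ c_\beta - \sigma_X(-\cA\nu) - \relentr(\nu_{\setminus\beta},\,\e c_{\setminus\beta}),
\]
so any $\nu$ satisfying \eqref{eq:relativentropycondition} is a certificate that $g\geq 0$ on $X$, hence that $f\in C_X(\cA,\beta)$.

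For the (only if) direction I would compute the Fenchel dual of $\inf_{x\in X} g(x)$. Writing $g = F\circ B + c_\beta$ with $F(z)=\sum_{\alpha\neq\beta}c_\alpha\e^{z_\alpha}$ and the linear map $B:\R^n\to\R^{\cA\setminus\{\beta\}}$ given by $(Bx)_\alpha=(\alpha-\beta)^T x$, the conjugate of $F$ is exactly $\relentr(\cdot,\e c_{\setminus\beta})$ on the nonnegative orthant; combined with $\delta_X^*=\sigma_X$ and the identity $B^T\nu_{\setminus\beta} = \cA\nu$ (under $\nu_\beta := -\oneb^T\nu_{\setminus\beta}$), the Fenchel duality theorem produces
\[
  \inf_{x\in X} g(x)\ =\ \sup_{\nu}\Bigl\{c_\beta - \sigma_X(-\cA\nu) - \relentr(\nu_{\setminus\beta},\,\e c_{\setminus\beta})\ :\ \oneb^T\nu=0,\ \nu_{\setminus\beta}\geq\zerob\Bigr\}.
\]
Whenever the primal value is nonnegative, a dual maximizer supplies the sought $\nu$.

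The main obstacle is securing strong duality together with attainment on the dual side. I would rely on the finiteness and continuity of $g$ on all of $\R^n$, so that a standard Rockafellar regularity condition such as $\relint(\dom g)\cap\relint X\neq\emptyset$ is trivially met, and on the coercivity of $\relentr(\cdot,\e c_{\setminus\beta})$ on its effective domain to obtain a maximizer. A secondary subtlety is the insistence on a \emph{nonzero} $\nu$ in the degenerate case $c_\beta\geq 0$ where $\nu=\zerob$ already satisfies the inequality; here a small perturbation along $\ker\oneb^T\cap\R^{\cA\setminus\{\beta\}}_+$ preserves \eqref{eq:relativentropycondition} by continuity of $\sigma_X$ at the origin and the fact that $\relentr(\zerob,\,\e c_{\setminus\beta})=0$, provided $c_{\setminus\beta}$ has a positive entry, which is guaranteed by the paper's standing linear-independence hypothesis except in the trivial case $\cA=\{\beta\}$.
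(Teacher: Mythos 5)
Your argument is essentially the paper's own: the paper only sketches the proof, by dividing out $\e^{\beta}$, writing $X$-nonnegativity of the resulting convex function as the program \eqref{eq:nonnegativity_as_primal_min}, and asserting that \eqref{eq:relativentropycondition} is its dual via convex conjugacy. You have filled in exactly that computation, with Fenchel--Young supplying weak duality (the ``if'' direction) and Rockafellar's relative-interior condition supplying strong duality with dual attainment (the ``only if'' direction); that core is correct.

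The one flawed step is your patch for the ``nonzero $\nu$'' clause. First, $\sigma_X$ is \emph{not} continuous at the origin when $X$ is unbounded: if $d \neq \zerob$ lies in $\rec(X)$ then $\sigma_X(y) = \infty$ whenever $y^T d > 0$, so points arbitrarily close to $\zerob$ have infinite support-function value and a generic small perturbation of $\nu = \zerob$ can destroy \eqref{eq:relativentropycondition}. The correct argument fixes a direction $\nu = s(\delta_\alpha - \delta_\beta)$ with $\sigma_X(\beta - \alpha) < \infty$ and $c_\alpha > 0$, and uses positive homogeneity $\sigma_X(-s\cA(\delta_\alpha - \delta_\beta)) = s\,\sigma_X(\beta-\alpha)$ together with $s\log(s/(\e c_\alpha)) \to 0^-$ as $s \to 0^+$. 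Second, your claim that the standing linear-independence hypothesis guarantees a positive entry of $c_{\setminus\beta}$ conflates a property of $\cA$ with a property of the particular coefficient vector: for $f = \e^{\beta}$ one has $c_{\setminus\beta} = \zerob$, and finiteness of $\relentr(\nu_{\setminus\beta}, \e c_{\setminus\beta})$ then forces $\nu = \zerob$. More generally no admissible nonzero direction need exist at all (e.g.\ a two-term posynomial over $X = \R^n$, where $\sigma_X(-\cA\nu)<\infty$ forces $\cA\nu = \zerob$ and hence $\nu=\zerob$ when the exponents are affinely independent); in such degenerate cases the ``nonzero'' clause simply cannot be met. This is an imprecision inherited from the statement (harmless, since these $f$ are posynomials plus a nonnegative constant and trivially lie in $C_X(\cA,\beta)$), not something a perturbation argument can repair, and it should be acknowledged as such rather than argued away.
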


Proposition \ref{pr:rel-entropy-cond} is important for computational optimization.
For example, if $X$ is the unit ball in the Euclidean norm, then $\sigma_X(-\cA \nu) = \| \cA \nu\|_2$, and so \eqref{eq:relativentropycondition} becomes a mixed relative-entropy and second-order-cone inequality.
More generally, the formulation is tractable whenever we can efficiently represent the epigraph of the support function of $X$.

Since the relative entropy condition in Proposition~\ref{pr:rel-entropy-cond} is essential
for our treatment, we outline its proof. Adopt $I_X$ as the indicator function
of $X$, with $I_X(x) = 0$ for $x\in X$ and $I_X(x) = \infty$ otherwise.
Given $f = \sum_{\alpha\in\cA}c_{\alpha}\e^{\alpha}$ with $c_{\setminus \beta} \geq \zerob$,
the primal formulation for $X$-nonnegativity of $f$ is
\begin{equation}\label{eq:nonnegativity_as_primal_min}
   \inf_{\substack{x \in \R^n \\ t \in \R^{\cA}}}\left\{\ I_X(x) + \sum_{\alpha\in\cA \setminus \beta} c_{\alpha}\exp{t_{\alpha}} ~:~ t_{\alpha} = (\alpha-\beta)^T x \; \, \forall \, \alpha\in\cA \right\}  \geq -c_{\beta}.
\end{equation}
The formulation \eqref{eq:relativentropycondition} is simply the dual to \eqref{eq:nonnegativity_as_primal_min} using the machinery of convex conjugate functions.
In particular, the relative entropy certificate $\nu$ in \eqref{eq:relativentropycondition} is the dual variable to the equality constraints in \eqref{eq:nonnegativity_as_primal_min}.

The larger goal of this article is to reveal additional structure in the $X$-SAGE cones $C_X(\cA)$ that is not immediately apparent from Proposition \ref{pr:rel-entropy-cond}.
From the case $X = \R^n$, the additional structure concerned the supports of signomials that generate extreme rays of $C_{\R^n}(\cA,\beta)$ or $C_{\R^n}(\cA)$.
In this context it is standard to use the term \textit{simplicial circuit} in the sense of subsets $A \subset \cA$.
Specifically, $A \subset \cA$ is a simplicial circuit if it is a minimal affinely dependent set and $\conv A$ has $|A|-1$ extreme points.
This definition of circuits in terms of these subsets $A \subset \cA$ is equivalent to the definition involving numeric vectors $\nu^\star \in \R^{\cA}$; see \cite{FdW-2019}.

\begin{proposition}[Theorem 5 of \cite{mcw-2018}]\label{pr:rn-sage-via-circuits}
    Let $\beta \in \cA$.
    A signomial $f = \sum_{\alpha\in \cA } c_\alpha \e^{\alpha}$ belongs to 
    $C_{\R^n}(\cA,\beta)$ if and only if it can be written as a finite sum $f = \sum_{i=1}^k f^{(i)}$ of signomials
    \[
     f^{(i)} \ = \ \sum_{\alpha\in \cA } c^{(i)}_\alpha \e^{\alpha} \in C_{\R^n}(\cA,\beta),
    \quad 1 \le i \le k,
    \]
    such that the supports $\{ \alpha \in \cA: c^{(i)}_{\alpha} \neq 0 \}$ are either singletons or simplicial circuits.
\end{proposition}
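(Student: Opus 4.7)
The reverse implication is immediate: if each $f^{(i)} \in C_{\R^n}(\cA,\beta)$, then so is $f = \sum_i f^{(i)}$, since $C_{\R^n}(\cA,\beta)$ is a convex cone (and every summand keeps its non-$\beta$ coefficients nonnegative). For the forward implication, the plan is to promote the relative entropy witness supplied by Proposition~\ref{pr:rel-entropy-cond} into an explicit decomposition of $f$ with the claimed support structure.

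Since $X = \R^n$, the support function $\sigma_{\R^n}(-\cA\nu)$ equals $0$ when $\cA\nu = \zerob$ and $+\infty$ otherwise, so the condition in~\eqref{eq:relativentropycondition} forces $\cA\nu = \zerob$. Together with $\oneb^T\nu = 0$ and $\nu_\alpha \geq 0$ for $\alpha \neq \beta$ (needed for $D$ to be finite), any witness $\nu$ lies in the polyhedral cone
\begin{equation*}
K_\beta \ = \ \{\, \mu \in \R^\cA : \cA\mu = \zerob,\ \oneb^T\mu = 0,\ \mu_\alpha \geq 0 \text{ for all } \alpha \neq \beta \,\}.
\end{equation*}
A standard argument from affine matroid theory shows that $K_\beta$ is pointed (its non-$\beta$ coordinates are nonnegative and sum to $-\mu_\beta$) and that its edge generators are, up to positive scaling, exactly the vectors whose support is a simplicial circuit containing $\beta$ in the relative interior of the convex hull of the remaining points. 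By Carath\'{e}odory's theorem I may therefore write $\nu = \sum_{i=1}^k \nu^{(i)}$, where each $\nu^{(i)}$ is a nonnegative scalar multiple of such an edge generator.

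The third step is to split the coefficients of $f$ in proportion to this decomposition. For $\alpha \neq \beta$ with $\nu_\alpha > 0$ (which forces $c_\alpha > 0$, since otherwise $D(\nu_{\setminus\beta}, \e c_{\setminus\beta}) = +\infty$) I set $c^{(i)}_\alpha = (\nu^{(i)}_\alpha / \nu_\alpha)\, c_\alpha$, and $c^{(i)}_\alpha = 0$ otherwise. The component $c^{(i)}_\beta$ is chosen so that the relative entropy inequality holds with equality for the pair $(\nu^{(i)}, c^{(i)})$. Using the identity $\nu^{(i)}_\alpha \log(\nu^{(i)}_\alpha / (\e c^{(i)}_\alpha)) = \nu^{(i)}_\alpha \log(\nu_\alpha / (\e c_\alpha))$, a direct summation collapses to
\begin{equation*}
\sum_{i=1}^k c^{(i)}_\beta \ = \ D(\nu_{\setminus\beta}, \e c_{\setminus\beta}) \ \leq \ c_\beta,
\end{equation*}
so that the slack $c_\beta - \sum_i c^{(i)}_\beta \geq 0$, together with the unused coefficients $c_\alpha$ for $\alpha \neq \beta$ lying outside $\supp\nu$, can be absorbed as singleton-supported nonnegative monomials. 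Each $f^{(i)}$ then lies in $C_{\R^n}(\cA,\beta)$ by Proposition~\ref{pr:rel-entropy-cond} with witness $\nu^{(i)}$, and its support is contained in the simplicial circuit $\supp\nu^{(i)}$.

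The main obstacle I anticipate is the degenerate bookkeeping. If $c_\beta \geq 0$, then $f$ is already a posynomial and decomposes trivially into nonnegative singletons, so the witness-based construction is unnecessary; if some $c^{(i)}_\beta$ produced above happens to vanish, the corresponding $f^{(i)}$ is a posynomial on $\supp\nu^{(i)} \setminus \{\beta\}$, which need not itself be a simplicial circuit and must be refined into singleton summands. Both situations are handled uniformly by the observation that every nonnegative monomial lies in $C_{\R^n}(\cA,\beta)$ with singleton support, so any such posynomial piece can always be split into terms of the allowed support types.
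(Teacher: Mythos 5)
Your argument is correct. The paper imports this statement from \cite{mcw-2018} without giving a proof, but your route --- extracting the entropy witness $\nu$ from Proposition~\ref{pr:rel-entropy-cond}, decomposing it into edge generators of $\ker\cA\cap N_\beta$ (which are exactly the simplicial circuits through $\beta$), and splitting the coefficients proportionally via $c^{(i)}_\alpha = (\nu^{(i)}_\alpha/\nu_\alpha)\,c_\alpha$ with $c^{(i)}_\beta$ chosen to make the entropy inequality tight --- is precisely the specialization to $X=\R^n$ of the decomposition the paper itself develops in Lemma~\ref{lem:linear_suppfunc_age_decomp} and Theorem~\ref{thm:x_circuits_age}, so it is essentially the same approach.
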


Of course, in view of Definition 2.1, Proposition \ref{pr:rn-sage-via-circuits} tells us every $f \in C_{\R^n}(\cA)$ similarly decomposes into AGE functions supported on singletons and simplicial circuits.

Revealing the full structure of conditional SAGE cones requires consideration to more than just a signomial's support.
Therefore, thinking in terms of affine-linear circuits as subsets $A \subset \cA$ will not suit our purposes.
The following definition codifies our convention of considering affine-linear circuits as numeric vectors.

\begin{definition}\label{de:rn-circuit}
    A nonzero vector 
    $\nu^\star \in \{\nu \in \R^{\cA} \, : \, \oneb^T \nu = 0 \}$
    in the kernel of the linear operator 
    $\nu \mapsto \cA \nu = \sum_{\alpha \in \cA} \alpha \nu_{\alpha}$
    is called an \emph{$\R^n$-circuit} if it is minimally supported and has exactly one negative component.
\end{definition}

It is possible that a given $\cA$ has no $\R^n$-circuits, but then every $\alpha \in \cA$ would be an extreme point of $\conv\cA$.
This is a degenerate case that results in $C_{\R^n}(\cA)$ containing only posynomials, but we still give consideration to this possibility throughout the article.
In the language of Definition \ref{de:rn-circuit}, we combine Propositions~\ref{pr:rel-entropy-cond} 
and~\ref{pr:rn-sage-via-circuits} to obtain the following formulation.

\begin{proposition}[Theorem 4.4 of \cite{FdW-2019}] Let $\beta \in \cA$.
    A signomial $f = \sum_{\alpha\in \cA } c_\alpha \e^{\alpha}$ belongs to 
    $C_{\R^n}(\cA,\beta)$ if and only if there exist $k \ge 0$ and signomials
    $
    f^{(i)} \ = \ \sum_{\alpha\in \cA } c^{(i)}_\alpha \e^{\alpha} \in C_{\R}(\cA,\beta),
    \quad 1 \le i \le k,
    $
    with $f = \sum_{i=1}^k f^{(i)}$ and such that for any signomial $f^{(i)}$ which is not
    supported on a singleton, there exists an $\R^n$-circuit $\nu^{(i)} \in \R^\cA$ with 
    $\relentr(\nu^{(i)}_{\setminus \beta},e c^{(i)}_{\setminus \beta}) \leq c^{(i)}_\beta.$
\end{proposition}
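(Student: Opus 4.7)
The plan is to derive the statement by combining Proposition~\ref{pr:rn-sage-via-circuits} with the relative entropy representation of Proposition~\ref{pr:rel-entropy-cond}. The backward implication is essentially immediate, while the only substantive content of the forward implication is identifying the dual witness produced by Proposition~\ref{pr:rel-entropy-cond} with a positive scalar multiple of an $\R^n$-circuit in the sense of Definition~\ref{de:rn-circuit}.

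For the backward direction, each singleton-supported $f^{(i)}$ already belongs to $C_{\R^n}(\cA,\beta)$ by hypothesis. For each non-singleton $f^{(i)}$, the hypothesis supplies an $\R^n$-circuit $\nu^{(i)}$ with $\cA \nu^{(i)} = \zerob$ and $\oneb^T \nu^{(i)} = 0$, together with the relative entropy bound. Since $\sigma_{\R^n}(-\cA \nu^{(i)}) = \sigma_{\R^n}(\zerob) = 0$, Proposition~\ref{pr:rel-entropy-cond} yields $f^{(i)} \in C_{\R^n}(\cA,\beta)$. The defining inequalities of $C_{\R^n}(\cA,\beta)$ are preserved under conic combinations, so $f = \sum_i f^{(i)}$ lies in $C_{\R^n}(\cA,\beta)$.

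For the forward direction, I would apply Proposition~\ref{pr:rn-sage-via-circuits} to write $f = \sum_{i=1}^k f^{(i)}$ with each $f^{(i)} \in C_{\R^n}(\cA,\beta)$ supported either on a singleton or on a simplicial circuit $A_i \subseteq \cA$. For each non-singleton $f^{(i)}$, Proposition~\ref{pr:rel-entropy-cond} furnishes a nonzero $\nu^{(i)} \in \R^{\cA}$ with $\oneb^T \nu^{(i)} = 0$ and $\sigma_{\R^n}(-\cA \nu^{(i)}) + \relentr(\nu^{(i)}_{\setminus \beta}, e c^{(i)}_{\setminus \beta}) \leq c^{(i)}_\beta$. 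Finiteness of the left-hand side forces, via the standard conventions for $\sigma_{\R^n}$ and $\relentr$, that $\cA \nu^{(i)} = \zerob$ (because $\sigma_{\R^n}$ is $+\infty$ off the origin), that $\nu^{(i)}_{\setminus \beta} \geq \zerob$, and that $\nu^{(i)}_\alpha = 0$ whenever $c^{(i)}_\alpha = 0$. Combined with $\oneb^T \nu^{(i)} = 0$, these constraints yield $\supp \nu^{(i)} \subseteq A_i$ and $\nu^{(i)}_\beta \leq 0$.

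The main obstacle is verifying that $\nu^{(i)}$ is itself an $\R^n$-circuit. Since $A_i$ is simplicial, hence minimally affinely dependent, the subspace of $\ker \cA$ satisfying $\oneb^T \nu = 0$ and $\supp \nu \subseteq A_i$ is one-dimensional and spanned by the essentially unique affine-linear circuit vector on $A_i$; thus $\nu^{(i)}$ is a scalar multiple of this vector, and minimality forces $\supp \nu^{(i)} = A_i$. Nontriviality of $\nu^{(i)}$, together with $\nu^{(i)}_{\setminus \beta} \geq \zerob$ and the sum condition, rules out $\nu^{(i)}_\beta = 0$ (otherwise $\nu^{(i)} \geq \zerob$ with $\oneb^T \nu^{(i)} = 0$ would force $\nu^{(i)} = \zerob$), so $\nu^{(i)}$ has exactly one negative entry, located at $\beta$. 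This matches Definition~\ref{de:rn-circuit}, completing the argument.
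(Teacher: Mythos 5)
Your overall route---deriving the statement by composing Proposition~\ref{pr:rn-sage-via-circuits} with Proposition~\ref{pr:rel-entropy-cond}---is exactly what the paper intends (it offers no further proof, simply saying the two propositions are ``combined'' and deferring to the cited reference), and both your backward direction and your identification of $\nu^{(i)}$ with a nonzero multiple of the essentially unique affine dependency on a minimally affinely dependent support are correct.

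There is one unaddressed edge case in the forward direction. Proposition~\ref{pr:rn-sage-via-circuits} only guarantees that each $\supp f^{(i)}$ is a singleton or a simplicial circuit; a priori a non-singleton summand could be a posynomial supported on a simplicial circuit $A_i$ with $\beta \notin A_i$ (so $c^{(i)}_\beta = 0$). In that situation your finiteness analysis yields only $\supp \nu^{(i)}_{\setminus\beta} \subseteq A_i$, while $\nu^{(i)}_\beta$ may well be nonzero, so that $\supp \nu^{(i)} \subseteq A_i \cup \{\beta\}$ rather than $A_i$. The space of affine dependencies supported on $A_i \cup \{\beta\}$ can be two-dimensional, so the one-dimensionality argument---and with it the conclusion that $\nu^{(i)}$ is an $\R^n$-circuit---breaks down for such a summand. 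The fix is immediate: any posynomial summand can be split further into singleton-supported terms, so one may assume without loss of generality that every non-singleton $f^{(i)}$ has $c^{(i)}_\beta < 0$ and hence $\beta \in A_i = \supp f^{(i)}$, after which your argument goes through verbatim.
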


\section{Sublinear circuits induced by a point set}\label{sec:x_circuts_def}

We begin this section with a functional analytic definition for the $X$-circuits of a point set $\cA$, generalizing $\R^n$-circuits to a constrained setting.
After revealing various elementary properties and discussing some examples,
we characterize $X$-circuits in more geometric terms in 
Theorems \ref{thm:characterize_circuits} and \ref{thm:polyhedron_x_finite_circuits}.
In particular the latter theorem interprets $X$-circuits in terms of normal fans
when $X$ is a polyhedron.
In Example \ref{ex:circuits-univariate1}, we determine the $[0,\infty)$-circuits of 
a univariate support set $\cA \subset \R$; the example is developed further in 
Section \ref{sec:x_circuits_sage} and culminates in a theorem completely
characterizing the extreme rays of the resulting $X$-SAGE cone 
$C_{[0,\infty)}(\cA)$ in Section \ref{sec:primal_perspective}.

The derivations in this section are purely combinatorial and convex-geometric, and make no mention of signomials.
However, the 
definition of $X$-circuits is ultimately chosen to prepare for studying $X$-SAGE cones, and in particular it relates to distinguished vectors $\nu \in \R^{\cA}$ that might
satisfy \eqref{eq:relativentropycondition} for certain $c \in \R^{\cA}$.
Note that \eqref{eq:relativentropycondition} has an implicit constraint $\nu_{\setminus\beta} \geq \zerob$ arising from our extended-real-valued definition of relative entropy.
To avoid dependence on relative entropy in this section, we frame our discussion of $X$-circuits in terms of cones
\begin{equation}\label{eq:nbeta}
  N_{\beta} = \{ \nu \in \R^{\cA} \,:\, \nu_{\setminus \beta} \geq \zerob,~ \oneb^T \nu = 0 \}
\end{equation}
for vectors $\beta \in \cA$.

\begin{definition}\label{def:x_beta_circuit}
    A vector $\nu^\star \in N_\beta$ is an \textit{$X$-circuit of $\cA$} (or simply, an \textit{$X$-circuit}) if (1) it is nonzero, (2) $\sigma_X(-\cA \nu^\star) < \infty$, and (3) it cannot be written as a convex combination of two non-proportional $\nu^{(1)},\nu^{(2)} \in N_\beta$, for which $\nu \mapsto \sigma_X(-\cA \nu)$ is linear on $[\nu^{(1)},\nu^{(2)}]$.
\end{definition}

The third condition is equivalent to strict sublinearity of $\nu \mapsto \sigma_X(-\cA \nu)$ on any line segment in $N_{\beta}$ that contains $\nu^\star$, except for the trivial line segments which generate a single ray.
The central importance of the sublinearity condition leads us to refer to $X$-circuits also
as \textit{sublinear circuits}; the latter term is helpful in remembering the definition early in our development.

\begin{remark}
    In the special case $X=\R^n$, condition (2) simplifies to $\cA \nu = \zerob$.
    In conjunction with the definition of $N_{\beta}$, this shows that the 
    special case $X=\R^n$ of Definition~\ref{def:x_beta_circuit} matches exactly
    with Definition~\ref{de:rn-circuit} of $\R^n$-circuits.
\end{remark}

Conceptually, Definition \ref{def:x_beta_circuit} indicates that $X$-circuits are essential in capturing the behavior of the augmented support function $\nu \mapsto \sigma_X(-\cA \nu)$ on the given $N_{\beta}$.
While developing this concept formally
it is convenient for us to enumerate $\nu^+ \coloneqq \{ \alpha \,:\, \nu_\alpha > 0 \}$, and to identify the unique index $\nu^- \coloneqq \beta \in \cA$ where $\nu_\beta < 0$.
Note that positive homogeneity of the support function tells us that the property of being a sublinear circuit is invariant under scaling by positive constants.
A sublinear circuit is \textit{normalized} if its unique negative term $\nu_{\beta}$ has $\nu_{\beta} = -1$, in which case we usually denote it by the symbol $\lambda$
rather than $\nu$.
We can normalize a given sublinear circuit by taking the ratio with its infinity norm $\lambda = \nu / \| \nu \|_{\infty}$, because $\|\nu\|_{\infty}=|\nu_\beta|$ for all vectors $\nu \in N_{\beta}$.

\begin{example}\emph{(The conic case.)}
    It is straightforward to determine which $\nu \in N_\beta$ are $X$-circuits of $\cA$ when $X$ is a cone.
    In such a setting, the support function of $X$ can only take on the values zero and positive infinity. Hence, $\nu \mapsto \sigma_X(-\cA \nu)$ is trivially linear over all of $V_\beta \coloneqq \{ \nu \in N_\beta \,:\, \sigma_X(-\cA \nu) < \infty \}$.
    Notice that $V_{\beta}$ is a cone and that $\sigma_X(-\cA\nu) = 0$ may be reformulated as $\nu \in (\cA^T X)^*$.
    Standard conic duality calculations (see Proposition \ref{prop:convex_analysis:dual_of_linear_image}) show that $(\cA^T X)^* = \ker \cA + \cA^\dagger X^*$, where $\cA^\dagger$ denotes the Moore-Penrose pseudo-inverse of $\cA$.
    Thus
    \[
    V_\beta = (\ker \cA + \cA^\dagger X^*) \cap N_\beta
    \]
    and the $X$-circuits $\nu \in N_{\beta}$ are precisely the edge generators of $V_{\beta}$.
    
    Regarding again the special case $X=\R^n$ from this conic perspective,
    we have $X^* = \{\zerob\}$, so $\cA^{\dagger} X^* = \{\zerob\}$, and $\ker \cA + \cA^{\dagger} X^* = \ker \cA$, which implies $V_\beta = \ker \cA \cap N_\beta$.
        It is easily shown that edge generators of $\ker \cA \cap N_\beta$ are precisely those $\nu \in \ker\cA \cap N_\beta \setminus\{\zerob\}$ for which $\nu^+ = \{ \alpha \,:\, \nu_\alpha > 0\}$ are affinely independent, which recovers the
    matroid-theoretic notion of affine-linear simplicial circuits from the point of view of subsets $A \subset \cA$.
\end{example}

The following proposition shows that the affine-independence property is a necessary condition for all sublinear circuits.
The proposition provides insight because it shows an $X$-circuit $\nu$ with $X \subset \R^n$ is restricted to $|\supp \nu| \leq n+2$.

\begin{proposition}\label{prop:x_circuit_aff_indep}
    If $\nu^\star \in N_\beta$ is an $X$-circuit, then $(\nu^\star)^+ = \supp \nu^\star \setminus \beta$ is affinely independent.
\end{proposition}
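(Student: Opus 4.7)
The plan is to argue by contradiction via a perturbation of $\nu^\star$ inside $N_\beta$. Suppose that $(\nu^\star)^+$ is affinely dependent. Then there exists a nonzero vector $\mu \in \R^{\cA}$ supported on $(\nu^\star)^+$ (so in particular $\mu_\beta = 0$ and $\mu_\alpha = 0$ for every $\alpha$ with $\nu^\star_\alpha = 0$) that satisfies $\oneb^T \mu = 0$ and $\cA \mu = \zerob$. The existence of such a $\mu$ is precisely the restatement of affine dependence of $(\nu^\star)^+$ as a kernel condition on the augmented matrix whose columns are $(\alpha,1)$ for $\alpha \in (\nu^\star)^+$.

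Next I would define the two perturbations $\nu^{(+)} = \nu^\star + \epsilon \mu$ and $\nu^{(-)} = \nu^\star - \epsilon \mu$ for a sufficiently small $\epsilon > 0$. Because $\mu_\beta = 0$, both vectors still have the same (negative) $\beta$-entry as $\nu^\star$; because $\mu$ is supported on $(\nu^\star)^+$ where $\nu^\star$ is strictly positive, choosing $\epsilon$ small makes the remaining entries of $\nu^{(\pm)}_{\setminus\beta}$ nonnegative; and $\oneb^T \mu = 0$ preserves the sum-to-zero constraint. Hence $\nu^{(\pm)} \in N_\beta$. Non-proportionality of $\nu^{(+)}$ and $\nu^{(-)}$ follows immediately by inspecting the $\beta$-coordinate: a scalar $c$ with $\nu^{(+)} = c \nu^{(-)}$ would force $c = 1$, giving $\epsilon \mu = -\epsilon \mu$, hence $\mu = \zerob$, a contradiction.

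The final step is to observe that $\cA \mu = \zerob$ implies $\cA \nu^{(+)} = \cA \nu^{(-)} = \cA \nu^\star$, so $\nu \mapsto \sigma_X(-\cA \nu)$ is in fact constant on the segment $[\nu^{(-)}, \nu^{(+)}]$, which is a fortiori linear on that segment and has finite value there (since $\sigma_X(-\cA\nu^\star) < \infty$ by condition (2) of Definition~\ref{def:x_beta_circuit}). Since $\nu^\star = \tfrac{1}{2}(\nu^{(+)} + \nu^{(-)})$ is written as a convex combination of two non-proportional elements of $N_\beta$ along which the augmented support function is linear, this contradicts the $X$-circuit property of $\nu^\star$.

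There is no substantial technical obstacle here; the proof is essentially a routine perturbation inside the feasibility cone $N_\beta$. The only points requiring care are (i) ensuring that the perturbation keeps us inside $N_\beta$, which uses both the location of $\supp\mu$ and the smallness of $\epsilon$, and (ii) verifying non-proportionality, which is immediate from the preserved $\beta$-coordinate.
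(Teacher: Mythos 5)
Your proof is correct and is essentially the paper's argument: both exploit that $\nu \mapsto \sigma_X(-\cA\nu)$ is constant on the fiber $\{\nu \in N_\beta : \cA\nu = \cA\nu^\star,\ \nu_\beta = \nu^\star_\beta\}$, so that the circuit condition forces $\nu^\star$ to be a vertex of that polytope. The paper cites the standard polyhedral fact that such vertices use affinely independent columns, whereas you prove its contrapositive by hand with the explicit perturbation $\nu^\star \pm \epsilon\mu$; this is the same mechanism, just unpacked.
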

\begin{proof}
    From a fixed $\nu^\star \in N_\beta$ construct $z = -\cA \nu^\star$ and $U = \{ \nu \in N_\beta \,:\, -\cA \nu = z,\, \nu_\beta = \nu^\star_\beta \}$.
    The function $\nu \mapsto \sigma_X(-\cA \nu)$ is a constant and equal to $\sigma_X(z)$ on $U$, and so in order for $\nu^\star$ to be an $X$-circuit, it must be a vertex of the polytope $U$.
    The set $U$ is in 1-to-1 correspondence with $W=\{ w \in \R^{\cA \setminus \beta}_{+} \,:\, \sum_{\alpha \in \cA \setminus \beta}(\beta-\alpha) w_\alpha = z,\, \mathds{1}^T w = -\nu^\star_\beta \}$ by identifying $w = \nu_{\setminus \beta}$. 
    In matrix notation, we can write $W = \{ w \in \R^{\cA \setminus\beta}_+ \,:\, M w = (z,-\nu_{\beta}^\star)\}$ by forming the matrix $M$ with columns $\{ (\beta-\alpha, 1) \}_{\alpha\in\cA\setminus\beta}$ indexed by $\alpha\in\cA\setminus\beta$.
   
    Basic polyhedral geometry tells us that all vertices $w^\star$ of $W$ use an affinely independent set of columns from $M$.
    Furthermore, a given set of columns from $M$ is affinely independent if and only if the corresponding indices of the columns (as vectors $\alpha\in\cA\setminus\beta$) are affinely independent.
    Since the correspondence between $\nu \in U$ and $w \in W$ preserves extremality, the vertices of $U$ have affinely independent positive support $\nu^+$.
\end{proof}

The converse of Proposition \ref{prop:x_circuit_aff_indep} is not true.
This is to say: not every vector $\nu \in N_{\beta}$ with affinely independent $\nu^+$ is an $X$-circuit.

\begin{example}\label{ex:aff_indep_not_sufficient}
    Let $\cA \subset \R^2$ contain $\alpha_1 = (0, 0)$, $\alpha_2= (1,0)$, and $\alpha_3 = (0,1)$, and consider $X = \{ x \in \R^2 \ : \ x \ge u\}$ for some fixed point $u \in \R^2$.
    The vector $\nu^\star = (-2,1,1)$ has $(\nu^\star)^- = \alpha_1 = (0,0)$, and $(\nu^\star)^+
    = \{\alpha_2,\alpha_3\} = \{(1,0),(0,1)\}$ is affinely independent.
    Considering $\nu^{(1)} = (-2,2,0)$ and $\nu^{(2)} = (-2,0,2)$, we have $\nu^\star =\frac{1}{2}(\nu^{(1)}+\nu^{(2)}) \in \relint L$ for $L \coloneqq [\nu^{(1)},\nu^{(2)}]$.
    Moreover, the mapping $\nu \mapsto \sigma_X(-\cA \nu)$
    is linear on $L$, because for any $\mu_1, \mu_2 \ge 0$ with
    $\mu_1+\mu_2 = 1$ we have 
    \begin{align*}
       \sigma_X(\cA (-\mu_1 \nu^{(1)} - \mu_2 \nu^{(2)}))
      & = \sigma_X((-2\mu_1,-2\mu_2)) = -2\mu_1 u_1- 2\mu_2 u_2 \\
      & =
      \sigma_X((-2\mu_1,0)) + \sigma_X((0,-2\mu_2)).
    \end{align*}
    The last equality is true
    since $(1,1)$ maximizes both the objective functions
    $x \mapsto (-2\mu_1,0)^T x$ and $x \mapsto (0,-2\mu_2)^T x$ on $X$.
\end{example}

With the basic exercise of Example \ref{ex:aff_indep_not_sufficient} complete, we turn to characterizing sublinear circuits in full generality.

\begin{theorem}\label{thm:characterize_circuits}
    Fix $\beta \in \cA$.
    The convex cone generated by
    \[
    T = \{\, (\nu, \sigma_X(-\cA \nu)) \,:\, \nu \in N_\beta,\, \sigma_X(-\cA \nu) < \infty\}
    \]
    is pointed (i.e., it contains no lines) and closed.
    A vector $\nu^\star \in N_\beta$ is an $X$-circuit of $\cA$ if and only if $(\nu^\star, \sigma_X(-\cA \nu^\star))$ is an edge generator for $\cone T$.
\end{theorem}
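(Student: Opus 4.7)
Pointedness comes first and follows from the pointedness of $N_\beta$. If $\nu$ and $-\nu$ both lie in $N_\beta$, then $\pm\nu_{\setminus\beta} \geq \zerob$ gives $\nu_{\setminus\beta} = \zerob$, and $\oneb^T\nu = 0$ then forces $\nu = \zerob$. Since $\cone T \subset N_\beta \times \R$ and $N_\beta$ is pointed, any line in $\cone T$ projects to $\{\zerob\}$ in the first coordinate and so lies in the fiber $\{\zerob\} \times \R$. Applying the same pointedness argument to a representation $(\zerob, r) = \sum_i \mu_i (\omega^{(i)}, \sigma_X(-\cA \omega^{(i)}))$ shows $\mu_i \omega^{(i)} = \zerob$ for each $i$, whence $r = 0$. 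Thus $\cone T \cap (\{\zerob\} \times \R) = \{(\zerob,0)\}$ and $\cone T$ contains no lines.

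Closedness is the main technical obstacle, and I would approach it via sequential compactness combined with Carathéodory's theorem. Given $(\nu^{(k)}, r^{(k)}) \in \cone T$ converging to $(\nu^*, r^*)$, I would write each element as $\sum_{i=1}^{|\cA|+1}(\omega_k^{(i)}, \sigma_X(-\cA \omega_k^{(i)}))$ with $\omega_k^{(i)} \in N_\beta$ and $\sigma_X(-\cA\omega_k^{(i)}) < \infty$. Nonnegativity of $(\omega_k^{(i)})_{\setminus\beta}$ together with boundedness of $\sum_i (\omega_k^{(i)})_{\setminus\beta}= (\nu^{(k)})_{\setminus\beta}$ yield a uniform bound on the $\omega_k^{(i)}$, so we may pass to subsequences with $\omega_k^{(i)} \to \omega_*^{(i)} \in N_\beta$ and $\sum_i \omega_*^{(i)} = \nu^*$. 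Each $\sigma_X(-\cA\omega_k^{(i)})$ is bounded (below via evaluation at a fixed point of $X$ and above by $r^{(k)}$ minus the other summands), so a further subsequence gives $\sigma_X(-\cA\omega_k^{(i)}) \to s^{(i)}$ with $\sum_i s^{(i)} = r^*$. The hard part is then to identify $s^{(i)} = \sigma_X(-\cA\omega_*^{(i)})$: lower semicontinuity of $\sigma_X$ supplies one inequality, and matching it requires a careful choice of the Carathéodory decomposition so that the terms avoid escape to the boundary of the effective domain, yielding $(\nu^*, r^*) = \sum_i (\omega_*^{(i)}, \sigma_X(-\cA\omega_*^{(i)})) \in \cone T$.

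Turning to the edge generator characterization, I would argue both directions. If $\nu^\star$ fails to be an $X$-circuit, Definition~\ref{def:x_beta_circuit} provides non-proportional $\nu^{(1)}, \nu^{(2)} \in N_\beta$ and $\mu \in (0,1)$ with $\nu^\star = \mu\nu^{(1)} + (1-\mu)\nu^{(2)}$ and $\sigma_X \circ (-\cA)$ linear on $[\nu^{(1)},\nu^{(2)}]$. Linearity forces $\sigma_X(-\cA\nu^\star) = \mu\sigma_X(-\cA\nu^{(1)}) + (1-\mu)\sigma_X(-\cA\nu^{(2)})$, so
\[
(\nu^\star, \sigma_X(-\cA\nu^\star)) = \mu(\nu^{(1)}, \sigma_X(-\cA\nu^{(1)})) + (1-\mu)(\nu^{(2)}, \sigma_X(-\cA\nu^{(2)}))
\]
is a non-trivial decomposition into non-proportional elements of $T \subset \cone T$, and so the generated ray is not extreme.

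Conversely, if $\nu^\star$ is an $X$-circuit and $(\nu^\star, \sigma_X(-\cA\nu^\star)) = w^{(1)} + w^{(2)}$ with $w^{(i)} = (\nu^{(i)}, r^{(i)}) \in \cone T$, I would use the inclusion $\cone T \subset \{(\nu,r) : \nu \in N_\beta,\, r \geq \sigma_X(-\cA\nu)\}$ (immediate from subadditivity and positive homogeneity of $\sigma_X\circ(-\cA)$) to get $r^{(i)} \geq \sigma_X(-\cA\nu^{(i)})$. Summing and using subadditivity yields
\[
\sigma_X(-\cA\nu^\star) = r^{(1)}+r^{(2)} \geq \sigma_X(-\cA\nu^{(1)}) + \sigma_X(-\cA\nu^{(2)}) \geq \sigma_X(-\cA\nu^\star),
\]
so every inequality is sharp. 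Hence $r^{(i)} = \sigma_X(-\cA\nu^{(i)})$ and $\sigma_X \circ (-\cA)$ is additive on the pair $\nu^{(1)},\nu^{(2)}$; for a sublinear function additivity on a pair is equivalent to linearity on the line segment they span. Since $\nu^\star$ is an $X$-circuit, Definition~\ref{def:x_beta_circuit} forces $\nu^{(1)}$ and $\nu^{(2)}$ to be nonnegative scalar multiples of $\nu^\star$, and the corresponding $w^{(1)}, w^{(2)}$ are then proportional to $(\nu^\star, \sigma_X(-\cA\nu^\star))$, establishing extremality of the ray.
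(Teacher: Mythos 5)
Your pointedness argument and your edge-generator characterization are both correct. The pointedness proof is essentially the paper's, spelled out more carefully (the paper simply notes that $\cone T\subset N_\beta\times\R$ and that the line through $(\zerob,1)$ cannot occur). For the equivalence with $X$-circuits you take a genuinely different route in the harder direction: the paper lifts to the epigraph of $\nu\mapsto\sigma_X(-\cA\nu)$, finds a proper face containing the putative decomposition, and invokes Carath\'eodory, whereas you use the containment $\cone T\subset\{(\nu,r):\nu\in N_\beta,\ r\geq\sigma_X(-\cA\nu)\}$ and observe that in any two-summand decomposition of $(\nu^\star,\sigma_X(-\cA\nu^\star))$ all the sublinearity inequalities must be tight; additivity then gives linearity on the segment, and after rescaling $\nu^\star=\nu^{(1)}+\nu^{(2)}$ to the convex combination $\tfrac12(2\nu^{(1)})+\tfrac12(2\nu^{(2)})$ (using scale-invariance of the circuit property) Definition~\ref{def:x_beta_circuit} forces proportionality. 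This is shorter and cleaner than the paper's argument, and I see no error in it.

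The genuine gap is closedness, and you have located the crux without resolving it. The identification $s^{(i)}=\sigma_X(-\cA\omega_*^{(i)})$ cannot be finessed: lower semicontinuity only yields $\sigma_X(-\cA\omega_*^{(i)})\leq s^{(i)}$, and the resulting slack adds a vector of the form $(\zerob,c)$ with $c>0$ to the last coordinate --- which your own pointedness argument shows is \emph{not} absorbed by $\cone T$. So ``a careful choice of the Carath\'eodory decomposition'' is carrying the entire proof while remaining unspecified; as written, this part is a plan rather than a proof. The paper avoids sequences: since every nonzero $\nu\in N_\beta$ has $\nu_\beta<0$, it normalizes to the base $Q_1=\{\lambda\in N_\beta:\lambda_\beta=-1,\ \sigma_X(-\cA\lambda)<\infty\}$, asserts that $Q_1$ is compact and that $\lambda\mapsto\sigma_X(-\cA\lambda)$ is continuous on it, so that $T_1=\{(\lambda,\sigma_X(-\cA\lambda)):\lambda\in Q_1\}$ and $\conv T_1$ are compact and avoid the origin, whence $\cone T=\cone(\conv T_1)$ is closed by \cite[Corollary 9.6.1]{rockafellar-book}. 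Be aware that these compactness and continuity assertions are exactly the ``escape to the boundary of the effective domain'' phenomenon you worried about: they are immediate when $X$ is polyhedral, but for a general closed convex $X$ the barrier cone $\{y:\sigma_X(y)<\infty\}$ need not be closed (consider $X=\{x\in\R^2:x_2\geq x_1^2\}$), and then $Q_1$ need not be closed and $\cone T$ can acquire the limit point $(\zerob,1)$. So your caution is warranted, but to match the paper you should either adopt its normalization-plus-compactness argument (under the hypotheses that make it valid) or supply the missing continuity/closedness justification explicitly.
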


\begin{proof}
    Let $Q$ denote the closed convex set $Q = \{ \nu \,:\, \nu \in N_\beta,\, \sigma_X(-\cA \nu) < \infty\}$.
    The claim of the theorem is trivially true if $Q = \{\zerob\}$, in which case there are no $X$-circuits $\nu \in N_\beta$ and $\cone T = \{(\zerob,0)\}$ has no extreme rays.
    We therefore assume for the duration of the proof that $Q$ contains a nonzero vector.
    
    We turn to showing $\cone T$ is closed and pointed, particularly beginning with pointedness.
    For this, observe $\cone T \subset N_\beta \times \R$.
    Since $N_\beta$ contains no lines, there are no lines in $\cone T$ of the form $(\nu,\tau)$ with $\nu \neq \zerob$.
    Meanwhile, we know that the line spanned by $(\zerob,1)$ cannot be contained in $\cone T$, since $\sigma_X(-\cA \zerob) = 0$.
    Now we turn to closedness of $\cone T$.
    Since $Q$ is contained within $ N_\beta$, we may normalize $Q$ against $\{ \nu \,:\, \nu_\beta = -1\}$:
    $Q = \cone Q_1$ for the nonempty compact convex set
    $Q_1 \coloneqq \{ \lambda \,:\, \lambda \in Q, \lambda_\beta = -1\}$.
    From $Q_1$ we construct
    $T_1 = \{ (\lambda,\sigma_X(-\cA \lambda)) \,:\, \lambda \in Q_1 \}$.
    The set $T_1$ inherits compactness from $Q_1$ (by continuity of $\lambda \mapsto \sigma_X(-\cA \lambda)$), and the convex hull $T_2 = \conv T_1$ inherits compactness from $T_1$ (as the convex hull of a compact set is compact).
    It is evident that $T_2$ does not contain the zero vector, and so by \cite[Corollary 9.6.1]{rockafellar-book} we have that $\cone T_2$ is closed.
    We finish this phase of the proof by identifying $\cone T = \cone T_2$.

    At this point we have that $\cone T$ is the convex hull of its extreme rays; it remains to determine the nature of these extreme rays.
    Since $T$ is a generating set for $\cone T$ and contains only vectors of the form $(\nu,\sigma_X(-\cA \nu))$, every edge generator of $\cone T$ is given by a nonzero vector $(\nu^\star,\sigma_X(-\cA \nu^\star))$ for appropriate $\nu^\star$.
    It is clear that $\nu^\star$ must be an $X$-circuit in order for $(\nu^\star,\sigma_X(-\cA \nu^\star))$ to be an edge generator of $\cone T$.
    The harder direction is to show that $\nu^\star$ being an $X$-circuit is \textit{sufficient} for $(\nu^\star,\sigma_X(-\cA\nu^\star))$ to be an edge generator for $\cone T$.
    
    To handle this direction, begin by defining an affinely independent set $\mathcal{V} = \{ \nu^{(i)} \}_{i=1}^\ell$ and a vector $\theta$ in the relative interior of  $\Delta_\ell \coloneqq \{ z \in \R^\ell_+ \,:\, \mathds{1}^T z = 1 \}$, where $\nu^\star = \sum_{i=1}^\ell \theta_i \nu^{(i)}$ and
    \[
        \sigma_X(-\cA \nu^\star) = \textstyle\sum_{i=1}^\ell \theta_i \sigma_X(-\cA \nu^{(i)}).
    \]
    We claim that $\nu \mapsto \sigma_X(-\cA \nu)$ is linear on the entirety of $\conv \mathcal{V}$.
    To see why, note that the assumption on $\nu^\star$ relative to $\mathcal{V}$ means the elements of $\Phi \coloneqq \{ (\nu^{(i)},\sigma_X(-\cA \nu^{(i)})) \,:\, i \in [\ell]\cup\{\star\} \}$ lie on a common hyperplane on the boundary of the epigraph $H = \{ (\nu,t) \,:\, \sigma_X(-\cA \nu) \leq t \}$.
    Since $\nu \mapsto \sigma_X(-\cA \nu)$ is convex, $H$ is a convex set, and there is some proper face $F$ of $H$ which contains $\Phi$.
    It is evident that $\nu \mapsto \sigma_X(-\cA \nu)$ is linear on the projection of that face $\hat{F} = \{ \nu\,:\, \exists t \in \R \, \; (\nu,t) \in F \}$.
    Since $\conv\mathcal{V} \subset \hat{F}$, this proves our claim regarding linearity of $\nu \mapsto \sigma_X(-\cA \nu)$ on $\conv \mathcal{V}$.
    
    By the above argument: if $\nu^\star$ is an $X$-circuit, then for every $\theta \in \relint \Delta_\ell$ and affinely independent $\mathcal{V} = \{ \nu^{(i)} \}_{i=1}^\ell \subset N_\beta$ with $\cone \mathcal{V} \neq \cone \{ \nu^\star \}$, we have
    \[
    (\nu^\star,\sigma_X(-\cA \nu^\star)) \neq \textstyle\sum_{i=1}^\ell \theta_i \left(\nu^{(i)},\sigma_X(-\cA \nu^{(i)})\right).
    \]
    From Carath\'eodory's Theorem, restricting to affinely independent $\mathcal{V} \subset T$ is sufficient to test extremality in $\cone T$.
    Therefore, every circuit $\nu^\star \in N_\beta$ induces an edge generator for $\cone T$.
\end{proof}

When considering the set ``$T$'' in Theorem \ref{thm:characterize_circuits}, it is natural to expect that for polyhedral $X$ there are only finitely many extreme rays in the cone $\cone T$, and hence only finitely many normalized $X$-circuits.
The remainder of this section serves to prove this fact; here we use the concept of \textit{normal fans} from polyhedral geometry. See, e.g., \cite[Chapter~7]{Ziegler}
(for the bounded case of polytopes), \cite[Section~5.4]{gkz-1994} or 
\cite[Chapter~2]{sturmfels-gbcp}.
For each face $F$ of a polyhedron $P$, there is an associated \textit{outer normal cone}
\[
    \Nsf_P(F) = \{ w \,:\, z^T w = \sigma_P(w) ~\forall~z\in F\}.
\]
Clearly, the support function of a polyhedron $P$ is linear on every outer
normal cone,
and in particular the linear representation may be given by $\sigma_P(w) = z^T w$ 
for any $z \in F$.
We obtain the \textit{outer normal fan} of $P$ by collecting all outer normal cones:
\[
    \mathcal{O}(P) = \{ \Nsf_P(F) \,:\, F \text{ is a face of } P \}
\]
The support of $\mathcal{O}(P)$ is the polar $\rec(P)^{\circ}$.
The full-dimensional linearity domains of the support function are the outer normal cones
of the vertices of $P$ (see also \cite[Section~1]{fillastre-izmestiev-2017}).

\begin{theorem}\label{thm:polyhedron_x_finite_circuits}
    If $X$ is polyhedral, then $\nu \in N_\beta \setminus \{\zerob\}$ is an 
    $X$-circuit if and only if
     $\cone\{\nu\}$ is a ray in $\mathcal{O}(-\cA^T X + N_{\beta}^\circ)$.
    Consequently, a polyhedral set $X$ has finitely many normalized circuits.
\end{theorem}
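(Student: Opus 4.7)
The plan is to recast the sublinearity condition on $\nu \mapsto \sigma_X(-\cA\nu)$ via the polyhedron $P := -\cA^T X + N_\beta^\circ$, which is polyhedral since $X$ is polyhedral, $N_\beta^\circ$ is finitely generated, and polyhedra are closed under linear images and Minkowski sums. First I would compute $\sigma_P$ explicitly: the Minkowski-sum formula for support functions together with the identity $\sigma_{K^\circ} = I_K$ for a closed convex cone $K$ yield $\sigma_P(\nu) = \sigma_X(-\cA\nu) + I_{N_\beta}(\nu)$. Hence $\dom \sigma_P = \{\nu \in N_\beta : \sigma_X(-\cA\nu) < \infty\}$ and $\sigma_P$ restricts to $\nu \mapsto \sigma_X(-\cA\nu)$ on this domain. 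By the standard correspondence between polyhedra and their support functions, the cones of $\mathcal{O}(P)$ partition $\dom \sigma_P = \rec(P)^\circ$ into cells on which $\sigma_X(-\cA\cdot)$ is linear.

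For the forward implication, I would take $\nu^\star$ to be an $X$-circuit and let $C \in \mathcal{O}(P)$ be the unique cone with $\nu^\star \in \relint C$. If $\dim C \geq 2$, then $\nu^\star$ is the midpoint of some segment $[\nu^{(1)}, \nu^{(2)}]$ of non-proportional vectors in $C \subseteq N_\beta$, and linearity of $\sigma_X(-\cA\cdot)$ on $C$ contradicts Definition \ref{def:x_beta_circuit}(3). Hence $\dim C = 1$ and $\cone\{\nu^\star\} = C$ is a ray of $\mathcal{O}(P)$.

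For the backward implication, suppose $\cone\{\nu^\star\}$ is a ray of $\mathcal{O}(P)$ and assume for contradiction that $\nu^\star = \mu\nu^{(1)} + (1-\mu)\nu^{(2)}$ with $\mu \in (0,1)$, non-proportional $\nu^{(1)},\nu^{(2)} \in N_\beta$, and $\sigma_X(-\cA\cdot)$ linear on $[\nu^{(1)}, \nu^{(2)}]$. The key step is to place the whole segment into a single cell of $\mathcal{O}(P)$. Since $P$ is polyhedral and $\sigma_P(\nu^\star) < \infty$, the maximum of $x \mapsto \nu^{\star T} x$ over $P$ is attained by some $v \in P$, giving $\sigma_P(\nu) \geq v^T\nu$ everywhere with equality at $\nu^\star$. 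Because this nonnegative difference is affine on the segment (by the assumed linearity of $\sigma_P$) and vanishes at the interior point $\nu^\star$, it must vanish on all of $[\nu^{(1)}, \nu^{(2)}]$. Hence $v \in \argmax_{x \in P} \nu^T x$ for every $\nu$ in the segment, placing the segment inside the outer normal cone $\Nsf_P(F_v) \in \mathcal{O}(P)$, where $F_v$ is the minimal face of $P$ containing $v$. Writing $C' := \Nsf_P(F_v)$, the ray $\cone\{\nu^\star\}$ is a cone of $\mathcal{O}(P)$ contained in $C'$, hence a face of $C'$; the defining face property of convex cones then forces both $\nu^{(1)}$ and $\nu^{(2)}$ into $\cone\{\nu^\star\}$, contradicting non-proportionality.

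Finiteness is then immediate: the polyhedral fan $\mathcal{O}(P)$ has finitely many cones and hence finitely many rays, and each ray meets the normalization hyperplane $\{\nu : \nu_\beta = -1\}$ in at most one point. The main obstacle I foresee is the argument in the backward direction that linearity of $\sigma_P$ on a segment forces the whole segment into a single cell of $\mathcal{O}(P)$; the subgradient-style argument above handles this by propagating equality from an interior supporting point to the full segment, then invoking the characterization of $\Nsf_P(F_v)$ as the set of linear functionals supported by $v$.
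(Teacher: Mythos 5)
Your proof is correct, and it follows the paper's overall strategy: identify $\sigma_X(-\cA\,\cdot)$ with $\sigma_P$ for $P=-\cA^TX+N_\beta^\circ$ (your support-function computation $\sigma_P=\sigma_X(-\cA\,\cdot)+I_{N_\beta}$ is a clean way to get the identity the paper extracts from \cite[Theorem 14.2]{rockafellar-book}), and read off circuits from the normal fan $\mathcal{O}(P)$. The forward implication and the finiteness conclusion are essentially identical to the paper's. Where you genuinely diverge is the backward implication. The paper restricts $\mathcal{O}(P)$ to the two-dimensional cone $\cone\{\nu^{(1)},\nu^{(2)}\}$ and argues that, since $\cone\{\nu^\star\}$ is a ray of the restricted fan, the endpoints lie in different two-dimensional linearity domains, so $\sigma_P$ cannot be linear across the segment. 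You instead pick a maximizer $v\in\argmax_{x\in P}\nu^{\star T}x$, observe that the nonnegative affine function $\sigma_P-v^T(\cdot)$ vanishes at the interior point $\nu^\star$ of the segment and hence on the whole segment, conclude that $[\nu^{(1)},\nu^{(2)}]\subseteq \Nsf_P(F_v)\in\mathcal{O}(P)$, and then use that the ray $\cone\{\nu^\star\}$ is a face of $\Nsf_P(F_v)$ to force both endpoints onto the ray. This is arguably tighter: the paper's step asserting that the endpoints must lie in \emph{different} maximal linearity domains (and that this precludes linearity of $g$) leaves a little for the reader to fill in, whereas your supporting-functional argument propagates equality explicitly and lands the contradiction directly on the face property, at the mild cost of invoking the fan axiom that a cone of $\mathcal{O}(P)$ contained in another is a face of it. Both arguments rely on the same underlying facts about normal fans of polyhedra, so neither is more general; yours is simply more self-contained at the critical step.
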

\begin{proof}
    Let $P = -\cA^T X + N_\beta^\circ$. Using the characterization in 
    \cite[Theorem 14.2]{rockafellar-book}, the polar of its recession cone
    can be expressed as
    \[
     (\rec P)^\circ = \{ \nu \,:\, \sigma_X(-\cA \nu) < \infty\} \cap N_\beta,
    \]
    where we have also used the property 
    $\sigma_X(-\cA \nu) = \sup_{x \in X} (-\cA \nu)^T x
    = \sup_{x \in -\cA^T X} \nu^T x 
    = \sigma_{-\cA^T X}(\nu).
    $
    In particular, this also gives $\sigma_X(-\cA \nu) = \sigma_P(\nu)$.
    From $P$ construct the outer normal fan $\mathcal{O} \coloneqq \mathcal{O}(P)$.
    We claim that $\cone\{\nu\}$ is a ray in $\mathcal{O}$.
    
    It is clear that if a cone $K \in \mathcal{O}$ is associated to a face $F$ of $P$, then we
    may express $\sigma_P(\nu) = z^T \nu$ for any $z \in F$, and so $\sigma_P(\nu) \equiv \sigma_X(-\cA\nu)$ is linear on $K$.
    Since the support of $\mathcal{O}$ is $\rec(P)^{\circ}$, the
    cones $K \in \mathcal{O}$ partition $(\rec P)^\circ$, i.e.,
    \begin{equation*}
        (\rec P)^\circ  =  \bigcup_{K \in \mathcal{O}} \relint(K),
    \end{equation*}
    and if $K,K'$ are distinct elements in $\mathcal{O}$, then $\relint K \cap \relint K' = \emptyset$.
    Therefore, every $\nu \in N_\beta \setminus \{\zerob\}$ for which $\sigma_X(-\cA \nu) < \infty$ is associated with a unique $K \in \mathcal{O}$, by way of $\nu \in \relint K$.
    
    Fix $\nu \in (\rec P)^\circ$, and let $K$ be the associated element of $\mathcal{O}$ that contains $\nu$ in its relative interior.
    If $K$ is of dimension greater than 1, $\nu$ can be expressed as a convex combination of non-proportional $\nu^{(1)},\nu^{(2)} \in K$ -- and clearly $\bar{\nu} \mapsto \sigma_X(-\cA \bar{\nu}) \equiv \sigma_P(\bar{\nu})$ would be linear on the interval $[\nu^{(1)},\nu^{(2)}]$.
    Thus for $\nu$ to be an $X$-circuit, it is necessary that $K$ be of dimension 1.
    Since $P$ is a polyhedron, $\mathcal{O}$ is induced by finitely many faces.
    Thus there are finitely many $K \in \mathcal{O}$ with $\dim K = 1$ and in turn finitely many normalized $X$-circuits of $\cA$.
    
    Conversely, let $\nu^\star \in N_\beta \setminus \{\zerob\}$ and $\cone\{\nu^\star\}$ be a ray in 
    $\mathcal{O}$. Since $\mathcal{O}$
    is supported on $\rec(P)^{\circ}$,
    we have $\sigma_X(- \cA \nu^\star) = \sigma_P(\nu^\star) < \infty$.
    
    Let $\nu^{(1)}, \nu^{(2)} \in N_{\beta}$ be non-proportional and 
    $\tau \in (0,1)$ satisfy 
    $\nu^\star = \tau \nu^{(1)} + (1-\tau) \nu^{(2)}$. If $\nu^{(1)}$ or $\nu^{(2)}$
    is outside of $\rec(P)^{\circ}$, say, $\nu^{(1)}$, then
    $\sigma_X(-\cA \nu^{(1)}) = \infty$ and thus the mapping
    $\nu \mapsto \sigma_X(-\cA \nu)$ cannot be linear on
    $[\nu^{(1)}, \nu^{(2)}]$. Hence, we can assume that 
    $\nu^{(1)}, \nu^{(2)} \in \rec(P)^{\circ}$.
    
    We have to show that the mapping
    \[
      g \, : [0,1] \to \R, \quad \theta \mapsto \sigma_P(\theta \nu^{(1)} + (1-\theta) \nu^{(2)})
    \]
    is not linear.
    
    Consider the restriction of the fan $\mathcal{O}$ to the cone
    $C := \cone \{\nu^{(1)}, \nu^{(2)} \}$, that is, the collection of all the 
    cones in $\{\Nsf _P(F) \cap C \, : \, F \text{ is a face of }P\}$. 
    This is a fan $\mathcal{O}'$ supported on the two-dimensional cone
    $S := \rec(P)^{\circ} \cap C$.
    On the set $S$, we consider the restricted mapping 
    $(\sigma_P)|_{S} \, : \, S \to \R$, $w \mapsto \sigma_P(w)$. 
    The linearity domains of $(\sigma_P)|_S$ are the two-dimensional cones
    in $\mathcal{O}'$. Since $\cone\{v^\star\}$ is
    a ray in the fan $\mathcal{O}$ and thus also in the fan 
    $\mathcal{O}'$, the vectors $\nu^{(1)}$ and
    $\nu^{(2)}$ are contained in different two-dimensional cones of the fan
    $\mathcal{O}'$. Hence, the mapping $g$ is not linear.
    Altogether, this shows that $\nu^\star$ is an $X$-circuit.
\end{proof}

\begin{example}
\label{ex:circuits-univariate1}
    We consider as a running example
    the one-dimensional case of $X = [0,\infty)$ and
    $\mathcal{A} = \{\alpha_1, \ldots, \alpha_m \} \subset \R$
    where we can assume $\alpha_1 < \cdots < \alpha_m$.
    In this running example we index by integers $i \in [m] := \{1,\ldots,m\}$ rather than by elements $\alpha\in\cA$.
    Therefore we identify $\R^{\cA}$ with $\R^m$ and use $\delta_i$ for the $i^{\text{th}}$ unit vector in $\R^m$ (for each $i \in [m]$).
    Under these conventions, $\cA$ is regarded as a row vector in $\R^{1 \times m}$ and $\cA^T = (\alpha_1,\ldots,\alpha_m)$ is a column vector in $\R^{m}$.
    We claim that
    the normalized $X$-circuits $\lambda \in \R^m$ are the vectors either of the form (1) $\lambda = \delta_k - \delta_j$ for $j < k$ or of the form (2)
    \[
    \lambda = \left(\frac{\alpha_j - \alpha_i}{\alpha_k - \alpha_i}\right)\delta_{k} + \left(\frac{\alpha_k-\alpha_j}{\alpha_k - \alpha_i}\right)\delta_i - \delta_j \quad\text{ for }\quad i < j < k.
    \]
    Note that vectors of type (2) satisfy $\cA \lambda = 0$, and in fact are the unique such vectors that also satisfy $\supp \lambda = \{i,j,k\}$, $\lambda_j = -1$, $\lambda_i,\lambda_k > 0$, $\mathds{1}^T \lambda = 0$.
    
    To derive this claim we consider for fixed $j \in [m]$ the polyhedron
    $P = -\mathcal{A}^T X + N_{j}^{\circ}$ from 
    Theorem~\ref{thm:polyhedron_x_finite_circuits}.
    It is evident that this polyhedron is a cone, that may be expressed as
    \[
      P \ = \ \cone\{ (-\alpha_1, \ldots, -\alpha_m)\} 
        + \R \cdot \mathds{1}  -
        \sum_{\ell \in [m] \setminus j} \cone \{\delta_{\ell}\}.
    \]
    The rays of its normal fan are the extreme rays of its polar
    \begin{equation}
      \label{eq:circuits-univariate1}
      P^{\circ} = (\rec P)^{\circ} = \{ \nu \in \R^{m} \, : \,
      (-\alpha_1, \ldots, -\alpha_m)^T \nu \le 0, \; \ \mathds{1}^T \nu = 0, \;
      \nu_{\ell} \ge 0 \text{ for } \ell \in [m] \setminus j \}.
    \end{equation}
    Note here that this gives us exactly the set ``$Q$'' from the proof of Theorem \ref{thm:characterize_circuits}. This happens because $X$ is conic and hence the support function $\sigma_X(-\cA\nu)$ evaluates to zero for every $X$-circuit $\nu$.
    By Proposition~\ref{prop:x_circuit_aff_indep}, each $X$-circuit in $N_{j}$
    has at most three non-vanishing components $\nu_i, \nu_j, \nu_k$,
    and, moreover, it has $m-2$ of the inequalities 
    in~\eqref{eq:circuits-univariate1} binding. If all those binding inequalities are
    of the form~$\nu_{\ell} \ge 0$, then with $\sigma_X(-\cA\nu)<\infty$,  we obtain the normalized 
    $X$-circuits of $\cA$ of type~(1). Now assume that the 
    inequality $(-\alpha_1, \ldots, -\alpha_m)^T \nu \le 0$ is binding for some 
    normalized $X$-circuit $\nu$ of $\cA$. Since
    the sign pattern $(-,+,+)$ for $(\nu_i,\nu_j,\nu_k)$ in conjunction with $\mathds{1}^T \nu = 0$ leads to $(-\alpha_1, \ldots, -\alpha_m)^T \nu < 0$, and 
    the sign pattern
    $(+,+,-)$ contradicts the $X$-circuit condition $\sigma_X(-\cA\nu)<\infty$, we obtain
    the normalized $X$-circuits of $\cA$ of type~(2).
\end{example}

For the example classes of the nonnegative orthant
and the cube $[-1,1]^n$, we refer the reader to
\cite{naumann-theobald-2021}.

\section{Sublinear circuits in AGE cones}\label{sec:x_circuits_age_cones}

In this section, we show how the $X$-AGE cones $C_X(\mathcal{A},\beta)$
can be further decomposed using sublinear circuits.
These decompositions lay the foundation to understand the extreme rays of the 
conditional SAGE cone $C_X(\cA)$.
Our first result here is a necessary criterion for an $X$-AGE function $f$ to be extremal in 
$C_X(\mathcal{A},\beta)$, which states that all of its relative entropy certificates must be $X$-circuits
(see Theorem~\ref{thm:x_circuits_age}). Definition \ref{def:lambda_witnessed_age_cone} introduces 
\textit{$\lambda$-witnessed AGE cones} as the subset of signomials in $C_{X}(\mathcal{A},\beta)$ 
whose nonnegativity is certified by a given normalized vector~$\lambda$. 
Theorem~\ref{thm:primal_x_age_powercone} then decomposes $C_X(\mathcal{A},\beta)$ through
the $\lambda$-witnessed AGE cones, where $\lambda$ is a normalized $X$-circuit.
As a consequence, for polyhedral $X$, the cone $C_X(\mathcal{A},\beta)$ is power-cone representable (see Corollary \ref{cor:primal_x_age_powercone_polyhedra}).

In the last part of this section we prove two propositions on explicit representations for primal 
and dual $\lambda$-witnessed AGE cones. Proposition \ref{prop:weighted_dual_x_age_powercone} 
in particular is very important for a characterization of dual $X$-SAGE cones, as it reveals a multiplicative convexity property used extensively in Section \ref{sec:x_circuits_sage}.

The following lemma provides a construction to decompose an $X$-AGE function into simpler summands, under a local linearity condition on the support function $\nu \mapsto \sigma_X(-\cA \nu)$.

\begin{lemma}\label{lem:linear_suppfunc_age_decomp}
    Let $f = \sum_{\alpha\in \cA } c_\alpha \e^{\alpha}$ be $X$-AGE with negative term $c_\beta < 0$.
    If $\nu$ is a relative entropy certificate for $f$ which
    can be written as a convex combination $\nu = \sum_{i=1}^k \theta_i \nu^{(i)}$ of non-proportional $\nu^{(i)} \in N_{\beta}$ and $\tilde{\nu} \mapsto \sigma_X(-\cA \tilde{\nu})$ is linear on $\conv\{\nu^{(i)}\}_{i=1}^k$, then $f$ is not extremal in $C_X(\cA,\beta)$.
\end{lemma}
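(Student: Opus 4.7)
The plan is to construct an explicit decomposition $f = \sum_{i=1}^k f^{(i)}$ into pairwise non-proportional signomials in $C_X(\cA,\beta)$; such a decomposition will display $f$ as a non-trivial positive combination of distinct rays of the $X$-AGE cone and thereby rule out extremality. Each $f^{(i)}$ will be certified via Proposition \ref{pr:rel-entropy-cond} by the rescaled vector $\hat\nu^{(i)} \coloneqq \theta_i \nu^{(i)} \in N_\beta$, whose defining virtue is that $\sum_i \hat\nu^{(i)} = \nu$ and, by positive homogeneity of $\sigma_X$ combined with the linearity hypothesis on $\conv\{\nu^{(i)}\}$, $\sigma_X(-\cA\nu) = \sum_i \sigma_X(-\cA \hat\nu^{(i)})$. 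Without loss of generality all $\theta_i > 0$, since any vanishing weights contribute nothing to $\nu$ and may be discarded.

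The core of the argument will be choosing coefficient vectors $c^{(i)} \in \R^\cA$ summing to $c$ so that each pair $(\hat\nu^{(i)}, c^{(i)})$ satisfies \eqref{eq:relativentropycondition}. For $\alpha \in \nu^+$ I will set $c^{(i)}_\alpha \coloneqq (\hat\nu^{(i)}_\alpha/\nu_\alpha)\, c_\alpha$; this is well-defined because finiteness of $D(\nu_{\setminus\beta}, e c_{\setminus\beta})$ forces $c_\alpha > 0$ whenever $\nu_\alpha > 0$. For $\alpha \in \cA \setminus (\nu^+ \cup \{\beta\})$, the identity $\hat\nu^{(i)}_\alpha = 0$ holds automatically, and I will set $c^{(i)}_\alpha \coloneqq \theta_i c_\alpha$. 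The motivation for this splitting is the pointwise identity $\hat\nu^{(i)}_\alpha \log(\hat\nu^{(i)}_\alpha / e c^{(i)}_\alpha) = \hat\nu^{(i)}_\alpha \log(\nu_\alpha / e c_\alpha)$ on $\nu^+$, which after summation over $i$ yields $\sum_i D(\hat\nu^{(i)}_{\setminus\beta}, e c^{(i)}_{\setminus\beta}) = D(\nu_{\setminus\beta}, e c_{\setminus\beta})$. Combined with the distributive identity for $\sigma_X$ and the bound \eqref{eq:relativentropycondition} assumed for $\nu$, the slack $c_\beta - \sum_i [\sigma_X(-\cA\hat\nu^{(i)}) + D(\hat\nu^{(i)}_{\setminus\beta}, e c^{(i)}_{\setminus\beta})]$ is non-negative, so I can distribute $c_\beta$ among the $c^{(i)}_\beta$ so as to absorb this slack while individually satisfying \eqref{eq:relativentropycondition} (with certificate $\hat\nu^{(i)}$). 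Proposition \ref{pr:rel-entropy-cond} then places each $f^{(i)} \coloneqq \sum_\alpha c^{(i)}_\alpha \e^\alpha$ in $C_X(\cA,\beta)$; any $f^{(i)}$ that ends up with $c^{(i)}_\beta \geq 0$ is just a posynomial, which also lies in $C_X(\cA,\beta)$.

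The step I expect to require the most care is verifying pairwise non-proportionality of the $f^{(i)}$, since only then does the decomposition genuinely certify non-extremality. Suppose $f^{(i)} = \mu f^{(j)}$ for some $\mu > 0$. Then for $\alpha \in \nu^+$ one has $c^{(i)}_\alpha = \mu c^{(j)}_\alpha$, which by the explicit splitting forces $\hat\nu^{(i)}_\alpha = \mu \hat\nu^{(j)}_\alpha$ there; off $\nu^+ \cup \{\beta\}$ both vectors already vanish; and the affine constraint $\oneb^T \hat\nu^{(i)} = 0$ then propagates proportionality to the $\beta$-coordinate. Hence $\hat\nu^{(i)}$ would be proportional to $\hat\nu^{(j)}$, and consequently $\nu^{(i)}$ to $\nu^{(j)}$, contradicting the hypothesis. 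This final step is the reason the splitting was designed to tie $c^{(i)}$ to $\hat\nu^{(i)}$ on $\nu^+$: it encodes non-proportionality of the certificates into non-proportionality of the summands.
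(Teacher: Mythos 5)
Your proposal is correct and follows essentially the same route as the paper's proof: split $c$ proportionally to the certificates on $\nu^+$ so that the ratios $\nu_\alpha/c_\alpha$ are preserved, use linearity of the augmented support function to make both the support-function term and the relative entropy term additive across the summands, and inherit non-proportionality of the summands from that of the $\nu^{(i)}$. The only cosmetic difference is that you absorb the weights into rescaled certificates $\hat\nu^{(i)}=\theta_i\nu^{(i)}$ and distribute the leftover mass on the coordinates outside $\supp\nu$ among the summands, whereas the paper keeps the $\nu^{(i)}$ unscaled and collects that leftover into an added posynomial.
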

\begin{proof}
    Construct vectors $c^{(i)}$ by
    \begin{equation}
        c^{(i)}_\alpha = \begin{cases} (c_\alpha/\nu_\alpha) \nu^{(i)}_\alpha &\text{ if } \alpha \in \nu^+ \\
        0 & \text{ otherwise } \end{cases}
        \qquad \text{ for all } \alpha \in \cA \setminus \beta,\label{eq:XcircuitAGEvectors}
    \end{equation}
    and $c^{(i)}_\beta = \sigma_X(-\cA \nu^{(i)}) + \relentr(\nu^{(i)}_{\setminus \beta}, e c^{(i)}_{\setminus \beta})$.
    These $c^{(i)}$ define $X$-AGE signomials by construction, and they inherit non-proportionality from the $\nu^{(i)}$.
    We need to show that $\sum_{i=1}^k \theta_i c^{(i)} \leq c$, which will establish that $f$ can be decomposed as a sum of these non-proportional $X$-AGE functions (possibly with an added posynomial).
    
    For indices $\alpha \in \nu^+$, the construction \eqref{eq:XcircuitAGEvectors} relative to $\nu$ and $\{\nu^{(i)}\}_{i=1}^k$ actually ensures $\sum_{i=1}^k \theta_i c^{(i)}_\alpha = c_\alpha$.
    For indices $\alpha \in \supp c \setminus \supp \nu$ we have $\sum_{i=1}^k \theta_i c^{(i)}_\alpha = 0 \leq c_\alpha$.
    The definitions of $\nu^{(i)}$ ensure
    \begin{equation}
    \sigma_X(-\cA \nu) = \sigma_X\left(-\cA(\textstyle\sum_{i=1}^k \theta_i \nu^{(i)})\right) = \textstyle\sum_{i=1}^k \theta_i \sigma_X(-\cA \nu^{(i)}).
    \label{eq:linearity_benefit_x_age}
    \end{equation}
    Meanwhile, \eqref{eq:XcircuitAGEvectors} provides $\nu^{(i)}_\alpha/c^{(i)}_\alpha = \nu_\alpha / c_\alpha$, which may be combined with $\sum_{i=1}^k \theta_i \nu^{(i)}_\alpha = \nu_\alpha\; \forall\; \alpha \in \cA$ to deduce 
    \begin{equation}
     \sum_{i=1}^k \theta_i \relentr(\nu^{(i)}_{\setminus \beta}, e c^{(i)}_{\setminus \beta}) = \relentr(\nu_{\setminus \beta},e c_{\setminus \beta}). \label{eq:circuit_x_age_relent_linear}
    \end{equation}
    We combine \eqref{eq:linearity_benefit_x_age} and \eqref{eq:circuit_x_age_relent_linear} to obtain the desired result
    \[
    \sum_{i=1}^k \theta_i c^{(i)}_\beta = \sum_{i=1}^k \theta_i\left(\sigma_X(-\cA \nu^{(i)}) + \relentr(\nu^{(i)}_{\setminus \beta},e c^{(i)}_{\setminus \beta})\right) = \sigma_X(-\cA \nu) + \relentr(\nu_{\setminus \beta},e c_{\setminus \beta}) \leq c_{\beta}.
    \]
\end{proof}

\begin{theorem}\label{thm:x_circuits_age}
    Let $f = \sum_{\alpha\in \cA } c_\alpha \e^{\alpha}$ be $X$-AGE with negative term $c_\beta < 0$.
If $f$ has a relative entropy certificate which is not an $X$-circuit, then $f$ is not extremal in $C_X(\cA,\beta)$.
\end{theorem}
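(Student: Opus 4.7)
My plan is to derive the theorem as an almost immediate consequence of Lemma~\ref{lem:linear_suppfunc_age_decomp} together with the negation of the third clause in Definition~\ref{def:x_beta_circuit}. The crux is simply to verify that a vector $\nu$ satisfying \eqref{eq:relativentropycondition} with $c_\beta<0$ automatically meets the nonzero and finite-support-function conditions (1)--(2) of Definition~\ref{def:x_beta_circuit}, so that failure to be an $X$-circuit must come from failure of condition~(3).

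First, I would extract from \eqref{eq:relativentropycondition} the membership $\nu\in N_\beta$: the inequality forces the summand $\relentr(\nu_{\setminus\beta},ec_{\setminus\beta})$ to be finite, which by the extended-real convention on $\relentr$ gives $\nu_{\setminus\beta}\ge\zerob$; combined with the explicit constraint $\oneb^T\nu = 0$, this is exactly the defining condition of $N_\beta$ from \eqref{eq:nbeta}. Next, I would check nonzeroness: if $\nu=\zerob$, the left-hand side of \eqref{eq:relativentropycondition} reduces to $\sigma_X(\zerob)+\relentr(\zerob,ec_{\setminus\beta})=0$, so we would conclude $0\le c_\beta$, contradicting $c_\beta<0$. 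Finally, the relative entropy term is nonnegative, so $\sigma_X(-\cA\nu)\le c_\beta < \infty$, giving condition~(2).

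With conditions (1) and (2) of Definition~\ref{def:x_beta_circuit} confirmed for $\nu$, the hypothesis that $\nu$ is not an $X$-circuit forces condition~(3) to fail. By definition this means $\nu$ can be written as a convex combination $\nu = \theta \nu^{(1)} + (1-\theta)\nu^{(2)}$ of two non-proportional vectors $\nu^{(1)},\nu^{(2)}\in N_\beta$ with $\theta\in(0,1)$, such that $\tilde\nu\mapsto\sigma_X(-\cA\tilde\nu)$ is linear on $[\nu^{(1)},\nu^{(2)}]$. This is precisely the hypothesis of Lemma~\ref{lem:linear_suppfunc_age_decomp} (with $k=2$), so applying that lemma yields a decomposition of $f$ into non-proportional $X$-AGE signomials in $C_X(\cA,\beta)$ (plus possibly a posynomial), certifying that $f$ is not extremal in $C_X(\cA,\beta)$.

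I do not anticipate a real obstacle here; the entire argument is just bookkeeping against Definition~\ref{def:x_beta_circuit} and an invocation of Lemma~\ref{lem:linear_suppfunc_age_decomp}. The only subtle point is making sure that the implicit sign and sum constraints on $\nu$ imposed by the extended-valued definition of $\relentr$ are indeed the same as those in the definition of $N_\beta$, which I would state explicitly to keep the argument self-contained.
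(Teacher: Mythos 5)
Your proposal is correct and follows exactly the paper's route: verify that $\nu$ meets conditions (1) and (2) of Definition~\ref{def:x_beta_circuit}, negate condition (3) to obtain the convex combination, and invoke Lemma~\ref{lem:linear_suppfunc_age_decomp}; the paper's own proof is just a terser version that asserts $\nu\neq\zerob$ and $\sigma_X(-\cA\nu)<\infty$ without spelling out the bookkeeping. One justification in your write-up is off, though: the term $\relentr(\nu_{\setminus\beta},ec_{\setminus\beta})=\sum_{\alpha\neq\beta}\bigl(\nu_\alpha\log(\nu_\alpha/c_\alpha)-\nu_\alpha\bigr)$ is \emph{not} nonnegative in general (take $\nu_{\setminus\beta}=c_{\setminus\beta}$, which gives $-\oneb^T\nu_{\setminus\beta}<0$), so you cannot conclude $\sigma_X(-\cA\nu)\le c_\beta$. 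The conclusion you need still holds because each summand $t\mapsto t\log(t/(ec_\alpha))$ is bounded below by $-c_\alpha$, so the relative entropy term is $>-\infty$, and since $\sigma_X>-\infty$ as well ($X\neq\emptyset$), finiteness of the sum in \eqref{eq:relativentropycondition} forces $\sigma_X(-\cA\nu)<\infty$.
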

\begin{proof}
    If $f$ is an $X$-AGE function with $c_\beta < 0$ and $\nu$ satisfies \eqref{eq:relativentropycondition}, then we must have $\nu \neq \zerob$ and $\sigma_X(-\cA \nu) < \infty$.
    By the definition of an $X$-circuit, $\nu$ may be written as a convex combination $\nu = \theta \nu^{(1)} + (1-\theta)\nu^{(2)}$ where $\bar{\nu} \mapsto \sigma_X(-\cA \bar{\nu})$ is linear on $[\nu^{(1)},\nu^{(2)}]$, and furthermore the $\nu^{(i)}$ are not proportional.
    We can therefore invoke Lemma \ref{lem:linear_suppfunc_age_decomp} to prove the claim.
\end{proof}

In the remainder of this section we eliminate the degree of freedom associated with $\nu$ laying on a ray.
For each $\beta \in \cA$, we introduce the following notation for the associated set of normalized $X$-circuits of $\cA$
\begin{equation*}
    \Lambda_X(\cA,\beta) = \{ \lambda \in N_\beta \,:\, \lambda \text{ is an } X\text{-circuit of } \cA ,\, \lambda_\beta = -1\}.\label{eq:normalized_x_beta_circuits}
\end{equation*}
The set of all normalized $X$-circuits of $\cA$ is denoted $\Lambda_X(\cA)$.
The main reason for introducing this notation is how it interacts with the following definition.

\begin{definition}\label{def:lambda_witnessed_age_cone}
Given a vector $\lambda \in N_{\beta}$ with $\lambda_{\beta} = -1$, the \textit{$\lambda$-witnessed AGE cone} is
\begin{equation}\label{eq:weighted_prim_x_age_powercone}
    C_X(\cA,\lambda) = \left\{~ \sum_{\alpha\in\cA}c_{\alpha}\e^{\alpha} \,:\, \prod_{\alpha \in \lambda^+} \left[\frac{c_\alpha}{\lambda_\alpha} \right]^{\lambda_\alpha} \geq - c_\beta \exp\left(\sigma_X(-\cA \lambda)\right) ,~ c_{\setminus \beta} \geq \zerob \right\}.
\end{equation}
\end{definition}
We show below that every signomial in $C_X(\cA,\lambda)$ is nonnegative on $X$.
The term ``witnessed'' in ``$\lambda$-witnessed AGE cone'' is chosen to reflect the defining role of $\lambda$ in the nonnegativity certificate.
We only use $\lambda$-witnessed AGE cones for theoretical purposes, and only with $\lambda \in \Lambda_X(\cA)$.
Possible computational uses (particularly with $\lambda \not\in \Lambda_X(\cA)$) are offered in Section \ref{se:discussion}.

\begin{theorem}\label{thm:primal_x_age_powercone}
   Let $\Lambda_X(\cA)\ne\emptyset$. The cone $C_X(\cA,\beta)$ can be written as the convex hull of $\lambda$-witnessed AGE cones,
    where $\lambda$ runs over the normalized $X$-circuits, that is,
    \[
     C_X(\cA,\beta) = \conv\bigcup_{\lambda \in \Lambda_X(\cA,\beta)} C_X(\cA,\lambda).
    \]
\end{theorem}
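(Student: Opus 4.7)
The plan is to prove the two inclusions separately. For $\supseteq$, by convexity of $C_X(\cA,\beta)$ it suffices to show $C_X(\cA,\lambda) \subseteq C_X(\cA,\beta)$ for each $\lambda \in \Lambda_X(\cA,\beta)$. Given $f = \sum_{\alpha\in\cA} c_\alpha \e^{\alpha} \in C_X(\cA,\lambda)$, the weighted AM/GM inequality with weights $\{\lambda_\alpha\}_{\alpha\in\lambda^+}$ yields
\[
  \sum_{\alpha \neq \beta} c_\alpha \e^{\alpha^T x} \;\geq\; \left( \prod_{\alpha \in \lambda^+} \left(\frac{c_\alpha}{\lambda_\alpha}\right)^{\!\lambda_\alpha} \right) \e^{(\cA\lambda + \beta)^T x},
\]
since $\lambda_\beta = -1$ forces $\sum_{\alpha \neq \beta}\lambda_\alpha \alpha = \cA\lambda + \beta$. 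Combining the defining power cone inequality \eqref{eq:weighted_prim_x_age_powercone} with the pointwise bound $(\cA\lambda)^T x \geq -\sigma_X(-\cA\lambda)$ valid for $x \in X$ then gives $f(x) \geq 0$ on $X$, so $f \in C_X(\cA,\beta)$ and thus $\conv\bigcup_\lambda C_X(\cA,\lambda) \subseteq C_X(\cA,\beta)$.

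For the reverse inclusion, fix $f \in C_X(\cA,\beta)$. The posynomial case $c_\beta \geq 0$ is trivial: the power cone condition \eqref{eq:weighted_prim_x_age_powercone} is vacuously satisfied for every normalized $\lambda$, so $f$ lies in every $C_X(\cA,\lambda)$. Assume $c_\beta < 0$. Proposition~\ref{pr:rel-entropy-cond} supplies a nonzero $\nu \in N_\beta$ with $\sigma_X(-\cA\nu) + \relentr(\nu_{\setminus\beta}, \e c_{\setminus\beta}) \leq c_\beta$, and Theorem~\ref{thm:characterize_circuits} places the pair $(\nu, \sigma_X(-\cA\nu))$ in the pointed closed cone $\cone T$, whose edge generators are exactly the $(\lambda^{(i)},\sigma_X(-\cA\lambda^{(i)}))$ for $\lambda^{(i)} \in \Lambda_X(\cA,\beta)$. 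I therefore decompose $(\nu, \sigma_X(-\cA\nu)) = \sum_{i=1}^k t_i(\lambda^{(i)}, \sigma_X(-\cA\lambda^{(i)}))$ with $t_i > 0$ and set $\mu^{(i)} \coloneqq t_i \lambda^{(i)}$. Applying the construction from the proof of Lemma~\ref{lem:linear_suppfunc_age_decomp} to these $\mu^{(i)}$ produces coefficient vectors $c^{(i)}$ satisfying $c^{(i)}_\alpha = (c_\alpha/\nu_\alpha)\mu^{(i)}_\alpha$ on $\nu^+$, $c^{(i)}_\alpha = 0$ on $\cA \setminus (\nu^+ \cup \{\beta\})$, and $c^{(i)}_\beta = \sigma_X(-\cA\mu^{(i)}) + \relentr(\mu^{(i)}_{\setminus\beta}, \e c^{(i)}_{\setminus\beta})$; the lemma guarantees $\sum_i c^{(i)} \leq c$ componentwise, so $g \coloneqq f - \sum_i f^{(i)}$ is a posynomial.

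The key remaining step---and the chief obstacle---is to verify that each $f^{(i)}$ actually lies in $C_X(\cA,\lambda^{(i)})$, that is, satisfies the power cone inequality \eqref{eq:weighted_prim_x_age_powercone} and not merely the relative entropy inequality used in its construction. Fix $i$ and write $t = t_i$, $\lambda = \lambda^{(i)}$, $\sigma = \sigma_X(-\cA\lambda)$, and $P = \prod_{\alpha\in\lambda^+}(c^{(i)}_\alpha/\lambda_\alpha)^{\lambda_\alpha}$. Using $\mu^{(i)}_\alpha = t\lambda_\alpha$ and $\sum_{\alpha\in\lambda^+}\lambda_\alpha = 1$, a direct expansion reduces the defining identity for $c^{(i)}_\beta$ to $c^{(i)}_\beta = t\bigl(\sigma + \log t - 1 - \log P\bigr)$. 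Substituting into the target inequality $P \geq -c^{(i)}_\beta \e^{\sigma}$ and setting $z = \log(P/t) - \sigma$, the inequality becomes the universal exponential bound $\e^z \geq 1 + z$. Hence $f^{(i)} \in C_X(\cA,\lambda^{(i)})$, and absorbing the posynomial residual $g$ (which lies in every $\lambda$-witnessed AGE cone) into any single summand yields $f \in \sum_\lambda C_X(\cA,\lambda) = \conv\bigcup_\lambda C_X(\cA,\lambda)$, which is the claimed identity.
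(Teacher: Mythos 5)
Your proof is correct, and it reorganizes the argument in a way that is genuinely different from the paper's. For the inclusion $\conv\bigcup_\lambda C_X(\cA,\lambda) \subseteq C_X(\cA,\beta)$ you give a direct weighted AM/GM argument combined with the pointwise bound $(\cA\lambda)^T x \geq -\sigma_X(-\cA\lambda)$ on $X$; the paper never argues this way, but instead derives nonnegativity of $\lambda$-witnessed AGE functions from the relative-entropy characterization (Proposition~\ref{pr:rel-entropy-cond}) via the equivalence, for $\nu$ restricted to the ray $\R_+\lambda$, between the entropy inequality and the power cone inequality (Proposition~\ref{prop:misc:powercone_relentr}). That single equivalence is the engine of the paper's proof: combined with Theorem~\ref{thm:x_circuits_age} (extreme rays of $C_X(\cA,\beta)$ have circuit witnesses), it settles both inclusions at once. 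You instead bypass Theorem~\ref{thm:x_circuits_age} entirely: you take an arbitrary $f \in C_X(\cA,\beta)$, decompose its witness $(\nu,\sigma_X(-\cA\nu))$ conically along the edge generators of $\cone T$ supplied by Theorem~\ref{thm:characterize_circuits}, rerun the coefficient-splitting construction of Lemma~\ref{lem:linear_suppfunc_age_decomp} for that specific conic decomposition, and verify the power cone inequality for each summand by an explicit computation reducing to $\e^z \geq 1+z$ (which is, in effect, one direction of Proposition~\ref{prop:misc:powercone_relentr} made concrete). What your route buys is self-containedness and transparency -- it does not need $C_X(\cA,\beta)$ to be reconstructed from its extreme rays, and the AM/GM direction makes the link to classical circuit-number certificates explicit; what the paper's route buys is economy, since the scaling equivalence proves membership in $C_X(\cA,\lambda)$ and $X$-nonnegativity simultaneously without any hand computation. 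One shared blemish, not attributable to you: both arguments implicitly assume $\Lambda_X(\cA,\beta) \neq \emptyset$ when $C_X(\cA,\beta)$ contains posynomials, a degenerate case the theorem statement glosses over.
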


Note here that for any $\beta \in \mathcal{A}$ and (normalized)  $\lambda \in N_{\beta}$, we have $\mathbb{R}^{\mathcal{A}}_+ \subset C_X(\mathcal{A},\lambda)$.

\begin{proof}
    Theorem \ref{thm:x_circuits_age} already tells us that for $\Lambda_X(\cA)\ne\emptyset$, $C_X(\cA,\beta)$ may be expressed as the convex hull of $X$-AGE functions $f = \sum_{\alpha \in \cA} c_\alpha \e^{\alpha}$ 
    which have $X$-circuits as relative entropy certificates.
    Therefore it suffices to show that (i) for any such function, the normalized $X$-circuit
    $\lambda = \nu / |\nu_{\beta}|$
    is such that $(c,\lambda)$ satisfy the condition in \eqref{eq:weighted_prim_x_age_powercone}, and (ii) if \textit{any} $(c,\lambda)$ satisfy \eqref{eq:weighted_prim_x_age_powercone}, then the resulting signomial is nonnegative on $X$.
    We will actually do both of these in one step.
    
    Suppose $\nu \in N_\beta$ is restricted to satisfy $\nu = s \lambda$ for a variable $s \geq 0$ and a fixed $\lambda \in \Lambda_X(\cA,\beta)$.
    It suffices to show that the set of $c \in \R^{\cA}$ for which
    \[
    \exists s \geq  0 \,:\, \nu = s \lambda \text{ and } \sigma_X(-\cA \nu) + \relentr(\nu_{\setminus \beta},e c_{\setminus \beta}) \leq c_\beta
    \]
    is the same as \eqref{eq:weighted_prim_x_age_powercone}.
    
    Let $r(\nu) = \sigma_X(-\cA \nu) + \relentr(\nu_{\setminus\beta},e c_{\setminus\beta})$.
    Apply positive homogeneity of the support function to see $\sigma_X(-\cA \nu) = |\nu_{\beta}| \sigma_X(\cA \nu /|\nu_{\beta}|)$, and use $\nu = s \lambda$ to infer $s = |\nu_{\beta}|$ and $\sigma_X(-\cA \nu / |\nu_{\beta}|) = \sigma_X(-\cA \lambda)$.
    Abbreviate $d := \sigma_X(-\cA \lambda)$ and substitute $\sum_{\alpha\in\lambda^+}\nu_{\alpha} = |\nu_{\beta}|$ to obtain
    \[
    r(\nu) = \textstyle\sum_{\alpha \in \lambda^+}\left( \nu_\alpha \log(\nu_\alpha / c_\alpha) - \nu_\alpha + \nu_\alpha d\right).
    \]
    The term $d$ may be moved into the logarithm by identifying $\nu_\alpha d = \nu_\alpha \log(1 / \exp(-d))$.
    For $\alpha \in \lambda^+$ we define scaled terms $\tilde{c}_\alpha = c_\alpha \exp(-d)$, so that $r(\nu) = \textstyle\sum_{\alpha \in \lambda^+} \nu_\alpha \log(\nu_\alpha / \tilde{c}_\alpha) - \nu_\alpha$.
    By Proposition \ref{prop:misc:powercone_relentr}, there exists a $\nu = s \lambda$ for which $r(\nu) \leq c_\beta$ if and only if
    \begin{equation}
      \label{eq:lambda-witnessed-end}
    -c_\beta \leq \prod_{\alpha \in \lambda^+} [\tilde{c}_\alpha / \lambda_\alpha]^{\lambda_\alpha}.
    \end{equation}
    Since $[\tilde{c}_\alpha / \lambda_\alpha]^{\lambda_\alpha} = [c_\alpha / \lambda_\alpha]^{\lambda_\alpha} \left( \exp(-d) \right)^{\lambda_\alpha}$ and $\prod_{\alpha \in \lambda^+} \left(  \exp(-d) \right)^{\lambda_\alpha} = \exp(-d)$,
    \eqref{eq:lambda-witnessed-end} can be recognized as the inequality occurring
    within~\eqref{eq:weighted_prim_x_age_powercone}, which completes the proof.
\end{proof}

Theorem \ref{thm:primal_x_age_powercone} shows how $\lambda$-witnessed AGE cones provide a window to the structure of \textit{full} AGE cones $C_X(\cA,\beta)$.
To appreciate the benefit of this perspective, it is necessary to consider the more elementary ``power cone.''
In our context, the primal power cone associated with a normalized $X$-circuit $\lambda \in \R^{\cA}$ is 
\[
    \powercone(\lambda) = \{ z \in \R^{\supp \lambda} \,:\, \textstyle\prod_{\alpha \in \lambda^+} z_{\alpha}^{\lambda_\alpha} \geq |z_\beta|,~ z_{\setminus \beta} \geq \zerob,~ \beta \coloneqq \lambda^{-} \};
\]
the corresponding dual cone is given by
\[
    \powercone(\lambda)^* = \{ w \in \R^{\supp \lambda} \,:\, \textstyle\prod_{\alpha \in \lambda^+}[w_\alpha / \lambda_\alpha]^{\lambda_\alpha} \geq |w_\beta|,~ w_{\setminus \beta} \geq \zerob,~ \beta \coloneqq \lambda^- \}. 
\]
It should be evident that $C_X(\cA,\lambda)$ can be formulated in terms of a dual $\lambda$-weighted power cone; a precise formula is provided momentarily.
For now we give a corollary concerning power cone representability and second-order representability
of $C_X(\cA)$ when $X$ is a polyhedron (see \cite{averkov-2019,BenTal2001} for formal definitions).

\begin{corollary}\label{cor:primal_x_age_powercone_polyhedra}
    If $X$ is a polyhedron, then $C_X(\cA)$ is power cone representable.
    If in addition $\cA^T X$ is \textit{rational}, then $C_X(\cA)$ is second-order representable
    and thus has semidefinite extension degree~2.
\end{corollary}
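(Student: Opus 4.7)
The strategy is to push both representability claims through the structural decomposition
$$C_X(\cA) \;=\; \sum_{\beta \in \cA}\conv\bigcup_{\lambda \in \Lambda_X(\cA,\beta)} C_X(\cA,\lambda)$$
obtained by combining Definition \ref{def:c_sage} with Theorem \ref{thm:primal_x_age_powercone}. Because $X$ is polyhedral, Theorem \ref{thm:polyhedron_x_finite_circuits} makes each index set $\Lambda_X(\cA,\beta)$ finite, so we are dealing with a finite Minkowski sum of conic hulls of finitely many $\lambda$-witnessed AGE cones. These operations preserve both power cone representability and second-order representability, hence it suffices to exhibit such representations for a single cone $C_X(\cA,\lambda)$.

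By Definition \ref{def:lambda_witnessed_age_cone}, $C_X(\cA,\lambda)$ is cut out by the linear constraints $c_{\setminus \beta} \geq \zerob$ together with the single inequality $\prod_{\alpha \in \lambda^+}[c_\alpha/\lambda_\alpha]^{\lambda_\alpha} \geq -c_\beta \exp(\sigma_X(-\cA\lambda))$. Under the positive linear rescaling $c_\beta' := c_\beta \exp(\sigma_X(-\cA\lambda))$, this is exactly a (dual) weighted power cone constraint with weights $(\lambda_\alpha)_{\alpha \in \lambda^+}$. This establishes the first claim, since power cone representability is preserved under linear changes of variables regardless of whether the rescaling constant is algebraic.

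For the second claim, I would argue that when $\cA^T X$ is rational, the polyhedron $P := -\cA^T X + N_\beta^\circ$ is rational, because the summand $N_\beta^\circ$ is polar to a cone with integer generators and hence itself rational. The rays of the normal fan of a rational polyhedron are themselves rational, so Theorem \ref{thm:polyhedron_x_finite_circuits} forces every $\lambda \in \Lambda_X(\cA,\beta)$ to have rational coordinates. Weighted power cones with rational weights admit exact second-order representations (\cite{BenTal2001}; cf.\ \cite{averkov-2019}), and each second-order cone is a $2 \times 2$ spectrahedron. Combining this with closure of SOC representability under finite convex hulls and Minkowski sums yields the asserted semidefinite extension degree of~$2$.

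\textbf{Main obstacle.} The only delicate point is the treatment of the (in general transcendental) scalar $\exp(\sigma_X(-\cA\lambda))$ appearing in the defining inequality of $C_X(\cA,\lambda)$. Once one recognizes that it enters solely as a positive real rescaling of the single scalar variable $c_\beta$ and therefore disturbs neither representability property, the remainder of the argument is bookkeeping built on Theorems \ref{thm:polyhedron_x_finite_circuits} and \ref{thm:primal_x_age_powercone} together with the classical fact that rationally-weighted power cones are exactly SOC-representable.
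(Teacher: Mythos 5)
Your proposal is correct and follows essentially the same route as the paper: finiteness of normalized $X$-circuits from Theorem \ref{thm:polyhedron_x_finite_circuits}, the decomposition of $C_X(\cA)$ into finitely many $\lambda$-witnessed AGE cones via Theorem \ref{thm:primal_x_age_powercone}, power cone representability of each $C_X(\cA,\lambda)$, and rationality of the circuits yielding second-order representability via \cite{BenTal2001}. You merely spell out two points the paper leaves implicit — why rationality of $\cA^T X$ forces $\Lambda_X(\cA)\subset\mathbb{Q}^{\cA}$ (via the rational normal fan of $-\cA^T X + N_\beta^\circ$), and why the possibly transcendental scalar $\exp(\sigma_X(-\cA\lambda))$ is harmless as a positive rescaling of $c_\beta$ — both of which are accurate.
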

\begin{proof}
    We can assume  $\Lambda_X(\cA)\neq\emptyset$, since otherwise $C_X(\cA)=\R_+^\cA$ and the claim follows. By Theorem \ref{thm:polyhedron_x_finite_circuits}, polyhedral $X$ have finitely many $X$-circuits, up to scaling.
    Apply Theorem \ref{thm:x_circuits_age} and finiteness of the normalized circuits $\Lambda_X(\cA)$ to write
    \[
    C_X(\cA) = \sum_{\lambda \in \Lambda_X(\cA)} C_X(\cA,\lambda).
    \]
    The first claim follows as each of the finitely many sets $C_X(\cA,\lambda)$ appearing in the above sum are (dual) power cone representable.
    For the second claim observe that under the rationality assumptions we have $\Lambda_X(\cA) \subset \mathbb{Q}^{\cA}$.
    Using $\beta \coloneqq \lambda^-$ and $m \coloneqq |\supp \lambda|$, it is known that the $m$-dimensional $\lambda$-weighted power cone (and its dual) are second-order representable when $\lambda_{\setminus \beta}$ is a rational vector in the $(m-1)$-dimensional probability simplex \cite[Section 3.4]{BenTal2001}.
    The last claim follows as the semidefinite extension degree of the second-order cone is 
    two \cite[Section 2.3]{BenTal2001}.
\end{proof}

The first part of Corollary \ref{cor:primal_x_age_powercone_polyhedra} generalizes the case $X=\R^n$ considered by Papp for polynomials \cite{papp-2019}.
That aspect of the corollary has uses in computational optimization when applied judiciously.
The second part of Corollary \ref{cor:primal_x_age_powercone_polyhedra} generalizes results by Averkov \cite{averkov-2019} and Wang and Magron \cite{wang2019} for ordinary SAGE polynomials, and recent results by Naumann and Theobald for several types of ordinary SAGE-like certificates \cite{NaumannTheobald2020}.
We have deliberately framed the second part of the corollary in abstract terms (semidefinite extension degree), because that aspect of the corollary seems not useful for computational optimization.

We now work towards finding a simple representation of dual $\lambda$-witnessed AGE cones $C_X(\cA,\lambda)^*$.
We begin this process by regarding the primal as a cone of coefficients contained in $\R^{\cA}$, and finding an explicit representation of the primal in terms of the elementary dual power cone  $\powercone(\lambda)^*$.
Towards that end we introduce a diagonal linear operator $S_\lambda : \R^{\cA} \to \R^{\supp \lambda}$ where $(S_\lambda w)_\alpha = w_\alpha$ for $\alpha \in \lambda^+$, and $(S_\lambda w)_\beta = w_\beta \exp(\sigma_X(-\cA \lambda))$ for $\beta \coloneqq \lambda^-$.
Recall that $\delta_\beta \in \R^{\cA}$ denotes the standard basis vector corresponding to $\beta \in \cA$, i.e., $\delta_\beta^T w = w_\beta$ for $w \in \R^\cA$.

\begin{proposition}\label{prop:weighted_age_basic_powercone}
    For $\lambda \in N_{\beta}$ with $\lambda_{\beta} = -1$ and $\sigma_X(-\cA \lambda) < \infty$, the $\lambda$-witnessed AGE cone admits the representation
    \begin{equation}
        C_X(\cA,\lambda) = \{ c \in \R^{\cA} \,:\, \beta \coloneqq \lambda^-,~ c_{\setminus \beta} \geq \zerob,~ (S_\lambda c - r \delta_\beta) \in \powercone(\lambda)^*,~ r \geq 0 \}.\label{eq:weighted_age_basic_powercone}
    \end{equation}
\end{proposition}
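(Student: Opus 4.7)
The plan is to verify the claimed equality by directly unpacking both sides and checking set containment in each direction. The key observation is that the free variable $r \geq 0$ on the right-hand side acts precisely as a slack for the sign of $c_\beta$.

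First I will spell out what the right-hand side says in coordinates. Writing $d \coloneqq \sigma_X(-\cA \lambda)$, the vector $S_\lambda c - r \delta_\beta$ has components $c_\alpha$ for $\alpha \in \lambda^+$ and $c_\beta \e^{d} - r$ at index $\beta$. Thus the condition $(S_\lambda c - r \delta_\beta) \in \powercone(\lambda)^*$, together with $c_{\setminus\beta} \geq \zerob$ and $r \geq 0$, reads
\begin{equation*}
 c_{\setminus\beta} \geq \zerob, \qquad r \geq 0, \qquad
 \prod_{\alpha \in \lambda^+}\left[\frac{c_\alpha}{\lambda_\alpha}\right]^{\lambda_\alpha} \;\geq\; \bigl|c_\beta \e^{d} - r\bigr|.
\end{equation*}
Meanwhile, membership in $C_X(\cA,\lambda)$ as in Definition~\ref{def:lambda_witnessed_age_cone} is exactly $c_{\setminus\beta}\geq\zerob$ together with $\prod_{\alpha \in \lambda^+}[c_\alpha/\lambda_\alpha]^{\lambda_\alpha} \geq -c_\beta \e^{d}$.

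For the inclusion of the left-hand side in the right, suppose $c \in C_X(\cA,\lambda)$. I will split on the sign of $c_\beta$: if $c_\beta \geq 0$, set $r = c_\beta \e^{d} \geq 0$, which makes $c_\beta \e^{d} - r = 0$, so the power-cone inequality reduces to $\prod_{\alpha \in \lambda^+}[c_\alpha/\lambda_\alpha]^{\lambda_\alpha} \geq 0$, which holds since $c_\alpha, \lambda_\alpha > 0$ on $\lambda^+$. If $c_\beta < 0$, set $r = 0$; then $|c_\beta \e^{d} - r| = -c_\beta \e^{d}$, so the power-cone inequality is exactly the defining AGE inequality.

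For the reverse inclusion, suppose $(c,r)$ satisfies the right-hand side conditions. Since $r \geq 0$, we have $c_\beta \e^{d} - r \leq c_\beta \e^{d}$, hence
\begin{equation*}
 \bigl|c_\beta \e^{d} - r\bigr| \;\geq\; -(c_\beta \e^{d} - r) \;=\; r - c_\beta \e^{d} \;\geq\; -c_\beta \e^{d}.
\end{equation*}
Combining with the assumed power-cone inequality yields $\prod_{\alpha \in \lambda^+}[c_\alpha/\lambda_\alpha]^{\lambda_\alpha} \geq -c_\beta \e^{d}$, so $c \in C_X(\cA,\lambda)$.

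There is no real obstacle here; the proof is essentially a bookkeeping exercise. The only subtlety worth flagging is the role of the slack variable $r$, which has no analogue on the left-hand side: it is introduced solely to keep the dual-power-cone formulation homogeneous and to absorb the possibility that $c_\beta$ is nonnegative, in which case the AGE inequality is vacuous but the power-cone inequality must still be satisfied.
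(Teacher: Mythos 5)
Your proof is correct and follows essentially the same route as the paper: both unpack the dual power cone condition coordinate-wise and observe that the slack $r\geq 0$ is optimally chosen as $0$ when $c_\beta<0$ (recovering the defining AGE inequality) and as $c_\beta\e^{\sigma_X(-\cA\lambda)}$ when $c_\beta\geq 0$ (making the constraint vacuous). The only cosmetic slip is the phrase ``$c_\alpha,\lambda_\alpha>0$ on $\lambda^+$'' --- one only has $c_\alpha\geq 0$ there, but the product is still nonnegative, so nothing breaks.
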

\begin{proof}
    First, we note that some inequality constraints $c_{\setminus \beta} \geq \zerob$ are implied by $(S_\lambda c - r \delta_\beta) \in \powercone(\lambda)^*$.
    It is necessary to include the inequality constraints explicitly, to account for the case when $\supp \lambda \subsetneq \cA$.
    The condition $(S_\lambda c - r \delta_\beta) \in \powercone(\lambda)^*$ can be rewritten as
    \begin{equation}
        \prod_{\alpha \in \lambda^+} [c_\alpha / \lambda_\alpha]^{\lambda_\alpha} \geq | c_\beta \exp(\sigma_X(-\cA \lambda)) - r|.\label{eq:prove_basic_age_powercone_lift}
    \end{equation}
    Meanwhile, the minimum of $| c_\beta \exp(\sigma_X(-\cA \lambda)) - r|$ over $r \geq 0$ is attained at $r = 0$ when $c_\beta < 0$ and $r = c_{\beta}\exp(\sigma_X(-\cA\lambda))$ when $c_\beta \geq 0$.
    In the $c_\beta < 0$ case the constraint \eqref{eq:prove_basic_age_powercone_lift} becomes
    \[
        \prod_{\alpha \in \lambda^+} [c_\alpha / \lambda_\alpha]^{\lambda_\alpha} \geq -c_\beta \exp(\sigma_X(-\cA \lambda)).
    \]
    In the $c_\beta \geq 0$ case the constraint \eqref{eq:prove_basic_age_powercone_lift} is vacuous, since $\prod_{\alpha \in \lambda^+} [c_\alpha / \lambda_\alpha]^{\lambda_\alpha} \geq 0$ is implied by $c_{\setminus \beta} \geq \zerob$.
    As the constraint in the preceding display is similarly vacuous when $c_\beta > 0$, we see that it can be used in lieu of \eqref{eq:prove_basic_age_powercone_lift} without loss of generality.
\end{proof}

We can appeal to Proposition \ref{prop:weighted_age_basic_powercone} to find a representation for $C_X(\cA,\lambda)^*$ which is analogous to Equation (\ref{eq:weighted_prim_x_age_powercone}).
Again, the dual is computed by regarding the primal as a cone of \textit{coefficients}.

\begin{proposition}\label{prop:weighted_dual_x_age_powercone}
    For $\lambda \in N_{\beta}$ with $\lambda_{\beta} = -1$ and $\sigma_X(-\cA \lambda) < \infty$, the dual $\lambda$-witnessed AGE cone is given by 
    \begin{equation}\label{eq:weighted_dual_x_age_powercone}
    C_X(\cA,\lambda)^* = \left\{ v \in \R^{\cA}_+ \,:\, \beta \coloneqq \lambda^-,~ \exp(\sigma_X(-\cA \lambda)) \prod_{\alpha \in \lambda^+} v_\alpha^{\lambda_\alpha} \geq v_\beta\right\}.
    \end{equation}
\end{proposition}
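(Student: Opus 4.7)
The plan is to prove the equality by verifying the two inclusions directly. I will lean on Proposition~\ref{prop:weighted_age_basic_powercone} only for intuition, and work instead with the explicit defining inequality $\prod_{\alpha\in\lambda^+}(c_\alpha/\lambda_\alpha)^{\lambda_\alpha}\geq -c_\beta\exp(\sigma_X(-\cA\lambda))$ together with $c_{\setminus\beta}\geq\zerob$. Throughout, I will use the fact that $\oneb^T\lambda = 0$ and $\lambda_\beta=-1$ imply $\sum_{\alpha\in\lambda^+}\lambda_\alpha = 1$, so the weighted AM/GM inequality $\sum_{\alpha\in\lambda^+}\lambda_\alpha x_\alpha \geq \prod_{\alpha\in\lambda^+} x_\alpha^{\lambda_\alpha}$ is valid for nonnegative $x$.

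For the inclusion ``$\subseteq$'': let $v\in C_X(\cA,\lambda)^*$. Testing against the ray generators $\delta_\alpha\in C_X(\cA,\lambda)$ for $\alpha\in\cA\setminus\beta$ yields $v_\alpha\geq 0$. To force the geometric inequality, I will introduce a family of boundary elements of $C_X(\cA,\lambda)$ parameterized by $u\in\R^{\lambda^+}_{++}$: set
\[
c_\alpha \,=\, \lambda_\alpha u_\alpha \ \text{ for } \alpha\in\lambda^+,\quad c_\beta \,=\, -\exp(-\sigma_X(-\cA\lambda))\prod_{\alpha\in\lambda^+}u_\alpha^{\lambda_\alpha},\quad c_\alpha=0 \text{ else.}
\]
A direct check shows $c\in C_X(\cA,\lambda)$, and $\langle v,c\rangle\geq 0$ rearranges to $\sum_{\alpha\in\lambda^+}\lambda_\alpha u_\alpha v_\alpha \geq v_\beta\exp(-\sigma_X(-\cA\lambda))\prod_{\alpha\in\lambda^+}u_\alpha^{\lambda_\alpha}$. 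Minimizing the left-hand side over $u$ subject to $\prod u_\alpha^{\lambda_\alpha}=1$ (which by equality in AM/GM occurs when all $u_\alpha v_\alpha$ are equal and yields minimum value $\prod v_\alpha^{\lambda_\alpha}$) produces exactly the claimed inequality $\exp(\sigma_X(-\cA\lambda))\prod v_\alpha^{\lambda_\alpha}\geq v_\beta$.

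For the inclusion ``$\supseteq$'': fix $v\geq\zerob$ satisfying the geometric inequality and any $c\in C_X(\cA,\lambda)$. Decompose
\[
\langle v,c\rangle \,=\, \sum_{\alpha\in\cA\setminus\supp\lambda} v_\alpha c_\alpha \,+\, \sum_{\alpha\in\lambda^+} v_\alpha c_\alpha \,+\, v_\beta c_\beta.
\]
The first sum is nonnegative since both factors are. If $c_\beta\geq 0$ the remaining terms are also nonnegative and we are done. If $c_\beta<0$, the defining inequality of $C_X(\cA,\lambda)$ forces $c_\alpha>0$ for all $\alpha\in\lambda^+$, and AM/GM gives
\[
\sum_{\alpha\in\lambda^+}v_\alpha c_\alpha \,=\, \sum_{\alpha\in\lambda^+}\lambda_\alpha\!\left(\frac{v_\alpha c_\alpha}{\lambda_\alpha}\right) \,\geq\, \prod_{\alpha\in\lambda^+}\!\left(\frac{v_\alpha c_\alpha}{\lambda_\alpha}\right)^{\!\lambda_\alpha} \,=\, \Big(\!\prod_{\alpha\in\lambda^+}v_\alpha^{\lambda_\alpha}\Big)\Big(\!\prod_{\alpha\in\lambda^+}(c_\alpha/\lambda_\alpha)^{\lambda_\alpha}\Big).
\]
Chaining with the two available geometric inequalities for $v$ and $c$ gives $\sum_{\alpha\in\lambda^+} v_\alpha c_\alpha \geq v_\beta|c_\beta| = -v_\beta c_\beta$, so $\langle v,c\rangle\geq 0$.

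The only subtle step is handling the boundary cases in the AM/GM arguments, namely when some $v_\alpha$ vanishes for $\alpha\in\lambda^+$ (forcing $v_\beta=0$ by the geometric inequality, making the required nonnegativity trivial) or when $u_\alpha\to 0$ in the testing family (handled by a density/continuity argument, or by noting that the infimum statement is insensitive to open/closed issues on the positive orthant). I expect the main obstacle to be organizing the $c_\beta<0$ case cleanly so that the two geometric inequalities compose correctly; once the factorization of $\exp(\pm\sigma_X(-\cA\lambda))$ is carried through explicitly, the proof concludes without further work.
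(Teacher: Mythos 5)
Your proof is correct, and it takes a genuinely different route from the paper's. The paper computes $C_X(\cA,\lambda)^*$ by Lagrangian duality: it starts from the lifted representation of Proposition~\ref{prop:weighted_age_basic_powercone}, introduces dual multipliers $\mu \in \powercone(\lambda)$ and $t \ge 0$ for the value function $\mathrm{Val}(v) = \inf\{v^Tc : c \in C_X(\cA,\lambda)\}$, and reads off the dual feasibility conditions $S_\lambda^{-1}v \in \powercone(\lambda)$, $v_\beta \ge 0$. You instead verify both inclusions by hand: for ``$\supseteq$'' you re-derive the primal--dual power cone pairing directly via weighted AM/GM (using $\sum_{\alpha\in\lambda^+}\lambda_\alpha = 1$ and the sign split on $c_\beta$), and for ``$\subseteq$'' you construct an explicit one-parameter family of boundary elements of $C_X(\cA,\lambda)$ and optimize the resulting linear inequalities over $u$. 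Your argument is more elementary --- it avoids invoking strong duality (the $\inf\sup = \sup\inf$ step in the paper) and makes the role of AM/GM, and hence the connection to the circuit-number certificates of SONC, completely explicit; the paper's route is shorter because it delegates the AM/GM content to the known duality between $\powercone(\lambda)$ and $\powercone(\lambda)^*$.

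One small omission: in the ``$\subseteq$'' direction you only test against $\delta_\alpha$ for $\alpha \in \cA\setminus\beta$, which gives $v_{\setminus\beta} \ge \zerob$ but not $v_\beta \ge 0$, and the latter is part of the claimed description (it is also needed in your own boundary-case discussion). This is immediate: $\delta_\beta$ itself lies in $C_X(\cA,\lambda)$, since $c_{\setminus\beta} = \zerob \ge \zerob$ and $0 \ge -\exp(\sigma_X(-\cA\lambda))$, so testing against $\delta_\beta$ yields $v_\beta \ge 0$. With that one line added, the proof is complete; your handling of the degenerate cases (some $v_\alpha = 0$ forcing $v_\beta = 0$, and the unattained infimum over $u$) is sound as sketched.
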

\begin{proof}
    Let $\beta = \lambda^-$ as is usual.
    To $v \in \R^\cA$ associate $\mathrm{Val}(v) = \inf\{ v^T c \,:\, c \in C_X(\cA,\lambda)\}$.
    A vector $v$ belongs to $C_X(\cA,\lambda)^*$ if and only if $\mathrm{Val}(v) = 0$.
    We will find constraints on $v$ so that the dual feasible set for computing $\mathrm{Val}(v)$ is nonempty, which in turn will imply $\mathrm{Val}(v) = 0$.
    
    We begin by noting that for any element $\alpha \in \cA \setminus \supp \lambda$, the only constraints on $c_\alpha, v_\alpha$ for $c \in C_X(\cA,\lambda),v \in C_X(\cA,\lambda)^*$ are $c_\alpha \geq 0, v_\alpha \geq 0$;
    therefore we assume $\cA = \supp \lambda$ for the remainder of the proof.
    When considering the given expression for $\mathrm{Val}(v)$ as a primal problem, we compute a dual using \eqref{eq:weighted_age_basic_powercone} from Proposition \ref{prop:weighted_age_basic_powercone}.
    Under the assumption $\cA = \supp \lambda$, the constraint $c_{\setminus \beta} \geq \zerob$ is implied by $(S_\lambda c - r \delta_\beta) \in \powercone(\lambda)^*$.
    Therefore when forming a Lagrangian for $\mathrm{Val}(v)$ using \eqref{eq:weighted_age_basic_powercone}, the dual variable to ``$c_{\setminus \beta} \geq \zerob$'' may be omitted.
    
    For the remaining constraints $(S_\lambda c - r \delta_\beta) \in \powercone(\lambda)^*$ and $r \geq 0$ we use dual variables $\mu \in \powercone(\lambda)$ and $t \in \R_+$ respectively; the Lagrangian is
    \begin{align*}
    \mathcal{L}(c,r,\mu,t) 
        &= v^T c - \mu^T(S_\lambda c - r \delta_\beta) - tr \\
        &= c^T (v - S_\lambda^T \mu) - r(t - \mu_\beta).
    \end{align*}
    For the Lagrangian to be bounded below over $c \in \R^{\cA}$ and $r \in \R$, it is necessary and sufficient that $v = S_\lambda^\intercal \mu$ and $\mu_\beta = t$.
    Since we have assumed $\supp \lambda = \cA$ and $\sigma_X(-\cA \lambda) < \infty$, the diagonal linear operator $S_\lambda$ is symmetric positive definite, so we can express the requirements on $\mu, t$ as
    \[
        S_\lambda^{-1} v = \mu \quad \text{and} \quad \mu_\beta = t.
    \]
    Therefore the conditions $S_\lambda^{-1} v \in \powercone(\lambda),~ v_\beta \geq 0$ are equivalent to
    \begin{align*}
    \mathrm{Val}(v)
        &= \inf\bigg\{\sup\{\mathcal{L}(c,r,\mu,t) \,:\, (\mu,t) \in \powercone(\lambda) \times \R_+ \}~:~ (c,r) \in \R^{\cA} \times \R \bigg\} \\
        &= \sup\bigg\{\inf\{\mathcal{L}(c,r,\mu,t) \,:\, (c,r) \in \R^{\cA} \times \R \} ~:~ (\mu,t) \in \powercone(\lambda) \times \R_+ \bigg\} = 0.
    \end{align*}
    The proposition follows by applying the definitions of $\powercone(\lambda)$ and $S_\lambda$.
\end{proof}

\section{Reduced sublinear circuits in SAGE cones}\label{sec:x_circuits_sage}

The previous section showed that an $X$-SAGE cone is generated by $X$-circuits.
Here we seek a much sharper characterization: are all $X$-circuits really necessary?
The answer to this question depends on whether one means to reconstruct an individual AGE cone, or the larger SAGE cone.
For example, by reinterpreting results from \cite{mcw-2018}, we may infer that every simplicial $\R^n$-circuit $\lambda \in \Lambda_{\R^n}(\cA,\beta)$ generates a $\lambda$-witnessed AGE cone containing an extreme ray of $C_{\R^n}(\cA,\beta)$.
In this way, every $\R^n$-circuit is needed if one requires complete reconstruction of individual AGE cones.
However, Katth\"an, Naumann, and Theobald showed that many extreme rays of AGE cones are \textit{not} extreme when considered in the sum $C_{\R^n}(\cA) = \sum_{\beta \in \cA} C_{\R^n}(\cA,\beta)$.
Specifically, an $\R^n$-circuit $\lambda \in \Lambda_{\R^n}(\cA)$ is only needed in $C_{\R^n}(\cA)$ if exactly one element of $\cA$ hits the relative interior of $\conv(\supp \lambda)$ \cite[Proposition 4.4]{Katthaen:Naumann:Theobald:UnifiedFramework}.
Circuits satisfying this property were called \textit{reduced}.
The goal of this section is to develop a reducedness criterion for $X$-circuits that yields the most efficient construction of $C_X(\cA)$ by $\lambda$-witnessed AGE cones, see 
Theorems~\ref{thm:reduced_x_circuits} 
and~\ref{thm:polyhedra_reduced_x_circuits}.
Achieving this goal is more difficult than obtaining the results from earlier sections.
Therefore we begin by summarizing and discussing the results, and we provide proofs in later subsections.

\subsection{Definitions, results, and discussion}

The definition of a reduced $\R^n$-circuit is of a purely combinatorial nature, involving the circuit's support.
This is appropriate because when speaking of affine-linear simplicial circuits, the normalized vector representation $\lambda$ is completely determined by its support.
In the context of $X$-circuits, we no longer have this property.
Therefore when developing reduced \textit{$X$-circuits} it is useful to have a different characterization of reduced \textit{$\R^n$-circuits}.
Here we can consider how Forsg{\aa}rd and de Wolff defined the \textit{Reznick cone of $\cA$} as the conic hull $R_{\R^n}(\cA) \coloneqq \cone\Lambda_{\R^n}(\cA)$ and  -- in the language of Katth\"an et al. -- subsequently proved that an $\R^n$-circuit $\lambda$ is an edge generator of $R_{\R^n}(\cA)$ if and only if it is reduced \cite{FdW-2019}.

Our definition of reduced $X$-circuits involves edge generators of a certain cone in one higher dimension than the Reznick cone.
To describe the cone and facilitate later analysis, we need the following definition.

\begin{definition}\label{def:func_form}
   The \textit{functional form} of an $X$-circuit $\nu \in \R^{\cA}$ is $\phi_\nu : \R^\cA \to \R$ defined by
    \[
    \phi_\nu(y) = \sum_{\alpha \in \cA} y_\alpha \nu_\alpha + \sigma_X(-\cA \nu).
    \]
\end{definition}

We routinely overload notation and use $\phi_\nu = (\nu,\sigma_X(-\cA \nu)) \in \R^{\cA} \times \R$ to denote the functional form of a given $X$-circuit.
When representing the functional form of an $X$-circuit by a vector in $\R^{\cA} \times \R$, the scalar $\phi_\nu(y)$ can be expressed as an inner product $\phi_\nu(y) = (y,1)^T \phi_\nu$.

\begin{definition}\label{def:circuit_graph}
    The \textit{circuit-generated cone} (shortly, \textit{CG cone}) 
    of $(\cA,X)$ is
    \[
    G_X(\cA) = \cone\left(\{ \phi_\lambda \,:\, \lambda \in \Lambda_X(\cA)\}\cup\{(\zerob,1)\}\right),
    \]
    where $(\zerob,1) \in \R^{\cA} \times \R$.
\end{definition}

The idea of generating a cone from augmented circuit vectors $(\nu,\sigma_X(-\cA \nu)) \in \R^{\cA} \times \R$ clearly parallels Theorem \ref{thm:characterize_circuits}.
While the cones from Theorem \ref{thm:characterize_circuits} are considered for one $\beta\in\cA$ at a time, the CG cone accounts for all $X$-circuits at once.
The CG cone also includes an extra generator that ultimately serves to make the following definition more stringent.

\begin{definition}\label{def:reduced_circuits}
    The \textit{reduced $X$-circuits of $\cA$} are the vectors $\nu$ where $\nu/\|\nu\|_{\infty} \in \Lambda_X(\cA)$ and the corresponding functional form $\phi_\nu$ generates an extreme ray of $G_X(\cA)$. The set of \textit{normalized} reduced $X$-circuits is henceforth denoted $\Lambda_X^\star(\cA)$.
\end{definition}

There is a subtle issue here that in order for reduced $X$-circuits to be of any use to us, the CG cone must be pointed (else $G_X(\cA)$ would have no extreme rays whatsoever).
We show later in this section that our stated assumption of linear independence of $\{\e^{\alpha}\}_{\alpha\in\cA}$ on $X$ ensures $G_X(\cA)$ is pointed.
Regardless of whether or not the CG cone is pointed, we have the following theorem.

\begin{theorem}\label{thm:log_dual_x_sage_valid}
    $C_X(\cA)^* = \cl\{ \exp y \,:\, (y,1) \in G_X(\cA)^* \}$.
\end{theorem}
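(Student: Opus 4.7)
The plan is to dualize the decomposition from Theorem~\ref{thm:primal_x_age_powercone} together with Definition~\ref{def:c_sage} to obtain
\[
  C_X(\cA)^* \ = \ \bigcap_{\lambda \in \Lambda_X(\cA)} C_X(\cA,\lambda)^*,
\]
and then to match this intersection against the exponential image of $G_X(\cA)^*$ via the change of variables $v = \exp y$. The only real subtlety is that $y \mapsto \exp y$ only reaches strictly positive vectors, so a density argument is needed to recover the elements of $C_X(\cA)^*$ that lie on the boundary of $\R^{\cA}_+$; this is the role of the closure operation in the statement.

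For the setup, I first combine Definition~\ref{def:c_sage} with Theorem~\ref{thm:primal_x_age_powercone} and the identity $\sum_i K_i = \conv\bigcup_i K_i$ for convex cones to write $C_X(\cA) = \conv\bigcup_{\lambda \in \Lambda_X(\cA)} C_X(\cA,\lambda)$; its dual is the displayed intersection. Using the explicit form from Proposition~\ref{prop:weighted_dual_x_age_powercone}, a nonnegative vector $v$ lies in $C_X(\cA,\lambda)^*$ exactly when $\exp(\sigma_X(-\cA\lambda))\prod_{\alpha \in \lambda^+} v_\alpha^{\lambda_\alpha} \geq v_\beta$ for $\beta := \lambda^-$. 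Substituting $v = \exp y$ and taking logarithms (using $\lambda_\beta = -1$ to absorb the $v_\beta$ term) rewrites this as $\sum_\alpha y_\alpha \lambda_\alpha + \sigma_X(-\cA\lambda) \geq 0$, i.e.\ $(y,1)^T \phi_\lambda \geq 0$. Since the extra generator $(\zerob,1)$ of $G_X(\cA)$ only contributes the automatic $(y,1)^T(\zerob,1) = 1 \geq 0$, this yields
\[
  \{\exp y \,:\, (y,1) \in G_X(\cA)^*\} \ = \ \{v \in C_X(\cA)^* \,:\, v > \zerob\}.
\]

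The inclusion ``$\subseteq$'' of the theorem then follows by taking closures, since $C_X(\cA)^*$ is closed. For the reverse inclusion, I would establish density of the strictly positive elements in $C_X(\cA)^*$ as follows: pick any $x_0 \in X$ and set $v_0 := \exp(\cA^T x_0)$. Then $v_0 > \zerob$ entrywise, and $v_0 \in C_X(\cA)^*$ because every $f = \sum_\alpha c_\alpha \e^{\alpha} \in C_X(\cA)$ satisfies $v_0^T c = f(x_0) \geq 0$. Hence for any $v \in C_X(\cA)^*$, the perturbations $v + \varepsilon v_0$ are strictly positive, remain in the convex cone $C_X(\cA)^*$, and converge to $v$ as $\varepsilon \downarrow 0$, proving ``$\supseteq$''. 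I expect the translation from the power-cone dual AGE description to the affine functionals $\phi_\lambda$ to be the conceptual heart of the argument; the density step is then routine but essential, and precisely accounts for why the closure in the statement cannot be dropped.
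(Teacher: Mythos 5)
Your proposal is correct and follows essentially the same route as the paper: dualize the decomposition of Theorem~\ref{thm:primal_x_age_powercone} into an intersection of dual $\lambda$-witnessed AGE cones, pass to logarithmic coordinates via Proposition~\ref{prop:weighted_dual_x_age_powercone} to recover the functionals $\phi_\lambda$, and handle the boundary of $\R^{\cA}_+$ by a density argument. The only (immaterial) difference is that you establish $C_X(\cA)^* = \cl\bigl(C_X(\cA)^* \cap \R^{\cA}_{++}\bigr)$ by the explicit perturbation $v + \varepsilon\exp(\cA^T x_0)$, whereas the paper invokes a relative-interior lemma (Lemma~\ref{lem:relint_containment}); your version is, if anything, more elementary.
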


Theorem \ref{thm:log_dual_x_sage_valid} is noteworthy in several respects.
It demonstrates that $C_X(\cA)^*$ is convex in the usual sense and convex under a logarithmic transformation $S \mapsto \log S = \{ y : \exp y \in S \}$.
This second form of convexity is a significant structural property.
For example, if we know that the log of the moment cone $\cl(\cone\{\exp(\cA^T x) : x \in X \})$ is not convex, then it should be that $C_X(\cA)$ does not contain all $X$-nonnegative signomials on $\cA$.
Additionally, Theorem \ref{thm:log_dual_x_sage_valid} can be reverse-engineered to arrive at the concept of a reduced $X$-circuit: the definition is chosen so that $(y,1)$ belongs to $G_X(\cA)^*$ if and only if $\phi_{\lambda}(y) \geq 0$ for all $\lambda$ in $\Lambda_X^\star(\cA)$.
Here, Theorem \ref{thm:log_dual_x_sage_valid} is a tool that we combine with convex duality to obtain the following results.

\begin{theorem}\label{thm:reduced_x_circuits}
    If $\Lambda_X(\cA)$ is empty, then $C_X(\cA) = \R^{\cA}_+$. Otherwise,
    \begin{equation}\label{eq:reduced_x_circuit_thm_conclusion}
        C_X(\cA) = \cl\left(\conv \bigcup \big\{  C_X(\cA,\lambda)~:~ \lambda \in \Lambda_X^\star(\cA) \big\}\right).
    \end{equation}
\end{theorem}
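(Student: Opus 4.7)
My plan is to proceed by convex duality, splitting on whether $\Lambda_X(\cA)$ is empty.

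\emph{Case 1: $\Lambda_X(\cA) = \emptyset$.} Then also $\Lambda_X(\cA,\beta) = \emptyset$ for every $\beta \in \cA$. Theorem~\ref{thm:characterize_circuits} exhibits a pointed closed cone in $\R^{\cA}\times\R$ whose edge generators correspond to $\Lambda_X(\cA,\beta)$; a pointed closed cone in finite dimensions with no edge generators must be trivial. Hence no nonzero $\nu \in N_\beta$ satisfies $\sigma_X(-\cA\nu) < \infty$, so Proposition~\ref{pr:rel-entropy-cond} prevents any $f \in C_X(\cA,\beta)$ from having $c_\beta < 0$. Combined with the sign constraints in~\eqref{eq:def_c_age}, this forces $C_X(\cA,\beta) \subseteq \R^{\cA}_+$; the reverse inclusion is immediate (every posynomial is $X$-AGE for any $\beta$), so summing over $\beta$ gives $C_X(\cA) = \R^{\cA}_+$.

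\emph{Case 2.} Set $K \coloneqq \conv\bigcup\{C_X(\cA,\lambda) : \lambda \in \Lambda_X^\star(\cA)\}$. Since $C_X(\cA,\lambda) \subseteq C_X(\cA,\lambda^-) \subseteq C_X(\cA)$, the inclusion $\cl K \subseteq C_X(\cA)$ follows once one invokes closedness of $C_X(\cA)$. For the reverse direction it suffices to show $K^* \subseteq C_X(\cA)^*$, since combined with the obvious $C_X(\cA)^* \subseteq K^*$ bipolarity then yields $\cl K = C_X(\cA)$. I compute $K^* = \bigcap_{\lambda \in \Lambda_X^\star(\cA)} C_X(\cA,\lambda)^*$ using Proposition~\ref{prop:weighted_dual_x_age_powercone}: each $C_X(\cA,\lambda)^*$ is cut out from $\R^{\cA}_+$ by a single multiplicative inequality which, under $y = \log v$, becomes $\phi_\lambda(y) \geq 0$ for strictly positive $v = \exp y$. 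Intersecting across reduced circuits gives $K^* \cap \R^{\cA}_{++} = \{\exp y : \phi_\lambda(y) \geq 0 \text{ for all } \lambda \in \Lambda_X^\star(\cA)\}$. By the definition of reduced circuits, the $\phi_\lambda$ are precisely the edge generators of the pointed cone $G_X(\cA)$; together with the extra generator $(\zerob,1)$ they cut out $G_X(\cA)^*$, and at height $t=1$ the $(\zerob,1)$-inequality reduces to the trivial $1 \geq 0$. Hence $K^* \cap \R^{\cA}_{++} = \{\exp y : (y,1) \in G_X(\cA)^*\}$, and Theorem~\ref{thm:log_dual_x_sage_valid} then identifies $C_X(\cA)^* = \cl(K^* \cap \R^{\cA}_{++})$. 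To pass from $K^* \cap \R^{\cA}_{++}$ back to all of $K^*$, note that $K \subseteq C_X(\cA)$ sits inside the cone of $X$-nonnegative signomials, so every moment vector $\exp(\cA^T x_0)$ with $x_0 \in X$ lies in $C_X(\cA)^* \subseteq K^*$; adding $\epsilon\exp(\cA^T x_0)$ to an arbitrary $v \in K^*$ keeps us inside the convex cone $K^*$ and produces a strictly positive approximant converging to $v$. Therefore $K^* = C_X(\cA)^*$, which finishes the argument by bipolarity.

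The principal obstacle is the structural step in which reduced circuits together with $(\zerob,1)$ cut out $G_X(\cA)^*$, with non-reduced circuits contributing only redundant dual inequalities. This rests on pointedness of $G_X(\cA)$---a prerequisite to be established separately using the linear-independence hypothesis on $\{\e^{\alpha}\}_{\alpha\in\cA}$---together with a Krein--Milman-type conclusion that every $\phi_\lambda$ (including for non-reduced $\lambda$) lies in the conic hull of the edge generators of $G_X(\cA)$. A secondary prerequisite, used in the final bipolarity step, is closedness of $C_X(\cA)$.
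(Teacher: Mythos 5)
Your proof is correct and follows essentially the same route as the paper: both arguments rest on Theorem~\ref{thm:log_dual_x_sage_valid}, on Proposition~\ref{prop:weighted_dual_x_age_powercone}, and on the fact that $G_X(\cA)$ is generated by $\{\phi_\lambda \,:\, \lambda \in \Lambda_X^\star(\cA)\}\cup\{(\zerob,1)\}$ (the paper's Theorem~\ref{thm:circuit_graph_generation}), followed by conic duality; your handling of the empty case and your explicit passage through $K^*\cap\R^{\cA}_{++}$ are simply unpacked versions of what the paper delegates to Rockafellar's Corollary~16.5.2. One caveat on your stated prerequisites: the Krein--Milman-type conclusion that every $\phi_\lambda$ lies in the conic hull of the edge generators of $G_X(\cA)$ requires not only pointedness but also \emph{closedness} of $G_X(\cA)$, and for non-polyhedral $X$ (where $\Lambda_X(\cA)$ may be infinite) closedness is the genuinely delicate ingredient, which the paper establishes via a compact generating set argument in Lemma~\ref{lem:circuit_graph_closed}.
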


We point out how Theorem \ref{thm:reduced_x_circuits} involves a closure around the union over $\lambda$-witnessed AGE cones, while Theorem \ref{thm:primal_x_age_powercone} has no such closure.
The need for the closure here stems from an application of an infinite version of conic duality in the course of the theorem's proof, while our proof of Theorem \ref{thm:primal_x_age_powercone} required no duality at all.
The requisite use of conic duality is simpler when $X$ is a polyhedron, as the following theorem suggests.

\begin{theorem}\label{thm:polyhedra_reduced_x_circuits}
    If $X$ is a polyhedron and $\Lambda_X(\cA)$ is nonempty, then the associated conditional SAGE cone is given by the finite Minkowski sum
    \begin{equation}\label{eq:polyhedra_reduced_x_circuit_thm_conclusion}
        C_X(\cA) = \sum_{\lambda \in \Lambda^\star_X(\cA)}  C_X(\cA,\lambda).
    \end{equation}
    Moreover, there is no proper subset $\Lambda \subsetneq \Lambda_X^\star(\cA)$ for which $C_X(\cA) = \sum_{\lambda \in \Lambda} C_X(\cA,\lambda)$.
\end{theorem}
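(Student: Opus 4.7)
The plan is to establish the identity and the minimality separately, using the finiteness of $\Lambda_X(\cA)$ for polyhedral $X$ from Theorem \ref{thm:polyhedron_x_finite_circuits}, the structural results of Theorems \ref{thm:primal_x_age_powercone} and \ref{thm:reduced_x_circuits}, the dual description in Proposition \ref{prop:weighted_dual_x_age_powercone}, and the log-transformation identity of Theorem \ref{thm:log_dual_x_sage_valid}.

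For the identity \eqref{eq:polyhedra_reduced_x_circuit_thm_conclusion}, Theorem \ref{thm:polyhedron_x_finite_circuits} first tells us that $\Lambda_X(\cA)$, and hence $\Lambda_X^\star(\cA) \subseteq \Lambda_X(\cA)$, is finite. Theorem \ref{thm:reduced_x_circuits} then yields
\[
    C_X(\cA) \;=\; \cl\!\Bigl(\,\textstyle\sum_{\lambda \in \Lambda_X^\star(\cA)} C_X(\cA,\lambda)\Bigr),
\]
after rewriting the convex hull of the finite union of cones as a Minkowski sum. The work is then to discard the closure. I would combine this with the closure-free identity $C_X(\cA) = \sum_{\lambda \in \Lambda_X(\cA)} C_X(\cA,\lambda)$ supplied by Corollary \ref{cor:primal_x_age_powercone_polyhedra} and the fact that any non-reduced $\lambda \in \Lambda_X(\cA) \setminus \Lambda_X^\star(\cA)$ admits a finite conic decomposition $\phi_\lambda = \sum_i \mu_i \phi_{\lambda_i} + \mu_0(\zerob,1)$ with $\lambda_i \in \Lambda_X^\star(\cA)$, since $G_X(\cA)$ is a pointed finitely generated cone and therefore equals the conic hull of its edge generators. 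Using the explicit power-cone representation in Proposition \ref{prop:weighted_age_basic_powercone}, this decomposition lifts to a primal one that rewrites any $f \in C_X(\cA,\lambda)$ as a finite sum of elements drawn from $\sum_i C_X(\cA,\lambda_i)$, so the finite Minkowski sum over $\Lambda_X^\star(\cA)$ already exhausts $C_X(\cA)$ without any closure.

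For the minimality I argue by contradiction. Assume some $\Lambda \subsetneq \Lambda_X^\star(\cA)$ satisfies $C_X(\cA) = \sum_{\lambda \in \Lambda} C_X(\cA,\lambda)$ and pick $\lambda^\star \in \Lambda_X^\star(\cA) \setminus \Lambda$. Set $G' = \cone(\{\phi_\lambda : \lambda \in \Lambda\} \cup \{(\zerob,1)\})$. Since $\phi_{\lambda^\star}$ generates an edge of $G_X(\cA)$ and is not proportional to $(\zerob,1)$ or to any $\phi_\lambda$ with $\lambda \in \Lambda$, no positive combination of the generators of $G'$ can equal $\phi_{\lambda^\star}$, so $\phi_{\lambda^\star} \notin G'$. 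Separating $\phi_{\lambda^\star}$ from the closed convex cone $G'$, and normalizing the last coordinate (possible because $(\zerob,1) \in G'$ forces the separating functional to be nonnegative there), produces $(y^\star,1) \in (G')^\ast$ with $\phi_\lambda(y^\star) \geq 0$ for every $\lambda \in \Lambda$ but $\phi_{\lambda^\star}(y^\star) < 0$. Taking $v^\star \coloneqq \exp y^\star$, Proposition \ref{prop:weighted_dual_x_age_powercone} translates these sign conditions directly into $v^\star \in C_X(\cA,\lambda)^\ast$ for all $\lambda \in \Lambda$ and $v^\star \notin C_X(\cA,\lambda^\star)^\ast$. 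Dualizing the supposed identity gives $C_X(\cA)^\ast = \bigcap_{\lambda \in \Lambda} C_X(\cA,\lambda)^\ast \ni v^\star$, yet the containment $C_X(\cA,\lambda^\star) \subseteq C_X(\cA)$ forces $C_X(\cA)^\ast \subseteq C_X(\cA,\lambda^\star)^\ast$, contradicting $v^\star \notin C_X(\cA,\lambda^\star)^\ast$.

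The main obstacle is the first half: Theorem \ref{thm:reduced_x_circuits} is proved via an infinite-dimensional conic duality that bakes in a closure operation, and peeling it off under polyhedrality of $X$ depends crucially on the finite generation of $G_X(\cA)$ together with the explicit power-cone representability of each $\lambda$-witnessed AGE cone. The minimality half, by contrast, is a clean duality argument once one recognizes that Definition \ref{def:reduced_circuits} was engineered precisely so that edge generators of $G_X(\cA)$ correspond under Theorem \ref{thm:log_dual_x_sage_valid} to strictly necessary inequalities cutting out $C_X(\cA)^\ast$.
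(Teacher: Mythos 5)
The genuine gap is in the first half, exactly where you flag ``the main obstacle.'' Your assertion that a conic decomposition $\phi_\lambda=\sum_i\mu_i\phi_{\lambda_i}+\mu_0(\zerob,1)$ ``lifts to a primal one'' yielding $C_X(\cA,\lambda)\subseteq\sum_i C_X(\cA,\lambda_i)$ is the entire content of removing the closure, and it does not follow from Proposition \ref{prop:weighted_age_basic_powercone}, which is merely a reformulation of a single $\lambda$-witnessed cone. The $\lambda_i$ generally lie in different cones $N_{\beta_i}$ (see the decomposition of $\delta_{i+1}-\delta_i$ in Example \ref{ex:reduced_extreme_rays}), so the coefficient-splitting device of Lemma \ref{lem:linear_suppfunc_age_decomp} is unavailable, and an explicit primal split of $c$ into $c^{(i)}\in C_X(\cA,\lambda_i)$ requires a nontrivially optimized allocation in which the power-cone inequalities can be tight. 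The statement is true, but the ready proof is dual: the decomposition gives $\bigcap_i C_X(\cA,\lambda_i)^*\subseteq C_X(\cA,\lambda)^*$ over the positive orthant (hence everywhere, via Lemma \ref{lem:relint_containment}), and passing back to the primal \emph{without a closure} needs a common relative-interior point of the dual cones (supplied by the full-dimensional moment cone) together with \cite[Corollary 16.4.2]{rockafellar-book}. Once that machinery is invoked you may as well run the whole first half on the dual side, which is what the paper does: Theorems \ref{thm:log_dual_x_sage_valid} and \ref{thm:circuit_graph_generation} give $C_X(\cA)^*=\bigcap_{\lambda\in\Lambda_X^\star(\cA)}C_X(\cA,\lambda)^*$, a finite intersection of cones all containing the interior of the moment cone, and dualizing yields \eqref{eq:polyhedra_reduced_x_circuit_thm_conclusion} with no closure.

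Your minimality argument, by contrast, is correct and genuinely simpler than the paper's: after producing $(y^\star,1)\in(G')^*$ with $\phi_{\lambda^\star}(y^\star)<0$, you conclude $\exp y^\star\notin C_X(\cA)^*$ directly from $C_X(\cA)^*\subseteq C_X(\cA,\lambda^\star)^*$ and Proposition \ref{prop:weighted_dual_x_age_powercone}, bypassing the paper's explicit exponentiated separating hyperplane (Lemma \ref{lem:exponentiate_separating_hyperplane}) entirely. One small repair: a functional separating $\phi_{\lambda^\star}$ from $G'$ is only guaranteed to have last coordinate $t\geq 0$, not $t>0$, so normalization is not immediate; fix this by averaging with a point $(y_0,1)\in G_X(\cA)^*\subseteq(G')^*$ (which exists by Theorem \ref{thm:log_dual_x_sage_valid} since $X\neq\emptyset$) -- a sufficiently small perturbation keeps the pairing with $\phi_{\lambda^\star}$ negative while making the last coordinate positive.
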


The first part of Theorem \ref{thm:polyhedra_reduced_x_circuits} follows easily from the arguments we use to prove Theorem \ref{thm:reduced_x_circuits}.
The second part of the theorem is much more delicate, and in fact is the reason why $G_X(\cA)$ is defined in the manner of \ref{def:circuit_graph}, rather than merely $\cone\{ \phi_\lambda \,:\, \lambda \in \Lambda_X(\cA) \}$.

The task of actually finding the reduced $X$-circuits of $\cA$ is difficult.
When $X$ is a polyhedron there are finitely many such $X$-circuits, but the naive method for finding them involves Fourier-Motzkin elimination on a set of potentially very high dimension.
There is more hope for this problem when $X$ is a cone.
In that case, $X$-circuits are the extreme rays of $(\ker \cA + \cA^{\dagger} X^*) \cap N_{\beta}$ for $\beta\in\cA$, and no lifting is needed to find these extreme rays with a computer.
The reduced $X$-circuits could then be computed by finding the extreme rays of the convex cone generated by the $X$-circuits.
The following detailed example finds the reduced $X$-circuits of $\cA$ in the univariate case with $X = [0,\infty)$.
The claim made in the example is used in Section \ref{sec:primal_perspective}.

\begin{example}\label{ex:reduced_extreme_rays}
    We continue the running example of $X=[0,\infty)$ from Example~\ref{ex:circuits-univariate1}.
    In particular recall $\cA = \{\alpha_1,\ldots,\alpha_m\}$ for $\alpha_1 < \cdots < \alpha_m$, indexing by $i \in [m]$, and working with standard basis $\delta_i \in \R^m$.
    We claim that
    \begin{equation}\label{eq:Lambda_X_star_univariate_cone}
    \Lambda_{[0,\infty)}^\star(\cA) = \{ \delta_2 - \delta_1 \} \cup \Lambda_{\R}^\star(\cA)
    \end{equation}
    where we have the following formula from \cite[Prop. 4.4]{Katthaen:Naumann:Theobald:UnifiedFramework}
    \[
        \Lambda_{\R}^\star(\cA) = \left\{ \left( \frac{\alpha_{i+1}- \alpha_i}{\alpha_{i+1}-\alpha_{i-1}}\right)\delta_{i-1} + \left( \frac{\alpha_i- 
			\alpha_{i-1}}{\alpha_{i+1}-\alpha_{i-1}}\right)\delta_{i+1} - \delta_i \,:\,  ~ 1 < i < m \right\}.
    \]
    
    As a first step towards seeing this, observe that since $X=[0,\infty)$ is a cone, the functional form of a $[0,\infty)$-circuit $\nu$ is simply
    $\phi_{\nu}(y) = \sum_{i=1}^m y_{i} \nu_{i}$.
    Hence, the reduced $[0,\infty)$-circuits are exactly the edge generators of the cone $\cone\Lambda_{[0,\infty)}$ generated by all the $[0,\infty)$-circuits of types (1) and (2) listed in Example~\ref{ex:circuits-univariate1}.
    Therefore, we have to show that $\{\delta_2-\delta_1\}\cup\Lambda_{\R}^\star(\cA)$ are exactly the normalized edge generators of $\cone\Lambda_{[0,\infty)}$.
    
    For the $X$-circuits $\delta_j-  \delta_i$ ($j > i$) of type~(1) in Example~\ref{ex:circuits-univariate1}, we show
    they decompose if $j>i+1$ or $i>1$.
    For $j>i+1$, this is apparent from the decomposition
    \[
       \delta_{j} - \delta_{i} \ = \ (\delta_{j} - \delta_{j-1}) + (\delta_{j-1} - \delta_{i}).
    \]
    For $j=i+1$ and $i > 1$, we can use the decomposition
    \[
      \delta_{i+1} - \delta_{i} 
      =
        \left( - \frac{\alpha_{i+1}-\alpha_i}{\alpha_i-\alpha_{i-1}} \delta_{i-1} +
          \frac{\alpha_{i+1}-\alpha_i}{\alpha_i-\alpha_{i-1}} \delta_{i} \right)
      + \left( \frac{\alpha_{i+1}-\alpha_i}{\alpha_i-\alpha_{i-1}} \delta_{i-1}
        -  \frac{\alpha_{i+1}-\alpha_{i-1}}{\alpha_i-\alpha_{i-1}} \delta_{i} + \delta_{i+1} 
        \right)
    \]
    into $X$-circuits with three non-vanishing components.
    As final consideration for type~(1), 
    the $X$-circuit $\delta_{2}-\delta_{1}$ cannot be written as a conic combination 
    of $X$-circuits with three non-zero entries, because any conic combination of 
    those  $X$-circuits has a positive entry in its non-vanishing component 
    with maximal index.
    For $X$-circuits of type (2) from Example \ref{ex:circuits-univariate1}, simply note that these are also $\R$-circuits.
    Therefore a necessary condition for a type (2) $X$-circuit $\lambda$ to be extremal in $\cone\Lambda_{[0,\infty)}$ is that $\lambda$ belongs to $\Lambda_{\R}^\star(\cA)$.

    It remains to show that none of the remaining $X$-circuits can be written as a convex combination of the others. 
    First note that an $X$-circuit $\nu \in \Lambda_{\R}^\star(\cA)$ 
    cannot be decomposed into a sum which involves an $X$-circuit $\tilde{\nu}$
    with two non-vanishing components. Namely, since $\mathcal{A} \nu = 0$
    and $\mathcal{A} \tilde{\nu} > 0$, we would obtain for the other summand
    $\nu - \tilde{\nu}$ the property $\mathcal{A} (\nu - \tilde{\nu}) < 0$ and
    thus $\sigma_{[0,\infty)}(-\mathcal{A} (\nu - \tilde{\nu})) = \infty$, a contradiction.
    And of course it is trivially true that no element $\lambda \in \Lambda_{\R}^\star(\cA)$ can be written as a convex combination of other such elements.
    Since $\cone\Lambda_{[0,\infty)}$ is finitely generated and there is no $S \subsetneq \{\delta_2-\delta_1\} \cup \Lambda_{\R}^\star(\cA)$ for which $\cone\Lambda_{[0,\infty)} = \cone S$, we conclude that $\{\delta_2-\delta_1\} \cup \Lambda_{\R}^\star(\cA)$ are the reduced $X$-circuits of $\cA$.
\end{example}

The remainder of this section is organized as follows.
Section \ref{subsec:log_dual_valid} proves Theorem \ref{thm:log_dual_x_sage_valid}, which is instrumental in later subsections.
In Section \ref{subsec:topology_of_circuit_graph} we introduce and prove a certain representation result for the CG cone.
Given the groundwork laid in these two subsections, 
Section \ref{subsec:x_circuits_sage_thm1} proves Theorem \ref{thm:reduced_x_circuits} in very short order.
Section \ref{subsec:x_circuits_sage_thm2} proves Theorem \ref{thm:polyhedra_reduced_x_circuits} by refining the arguments from Section \ref{subsec:x_circuits_sage_thm1}.

\subsection{Proof of Theorem \ref{thm:log_dual_x_sage_valid}}\label{subsec:log_dual_valid}

We begin with the following simple lemma.

\begin{lemma}\label{lem:relint_containment}
    If $S \subset T$ are convex sets where $S$ is closed and $S \cap \relint T \neq \emptyset$, then $S = \cl(S \cap \relint T)$.
\end{lemma}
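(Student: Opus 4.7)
The plan is to establish the two inclusions $\cl(S \cap \relint T) \subset S$ and $S \subset \cl(S \cap \relint T)$ separately, with the second being the substantive one. The main tool will be the classical line-segment principle for convex sets (\cite[Theorem 6.1]{rockafellar-book}): if $T$ is convex, $r \in \relint T$, and $s \in T$, then the half-open segment
\[
\{\, \theta r + (1-\theta) s \,:\, \theta \in (0,1] \,\}
\]
is entirely contained in $\relint T$.

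For the easy inclusion, I would observe that $S \cap \relint T \subset S$, so taking closures gives $\cl(S \cap \relint T) \subset \cl(S)$. The lemma is then used in contexts where $S$ is closed (e.g., closed convex cones such as $C_X(\cA)^*$ in the subsequent proofs), so this reduces to $\cl(S \cap \relint T) \subset S$.

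For the nontrivial inclusion $S \subset \cl(S \cap \relint T)$, the plan is as follows. Use the hypothesis $S \cap \relint T \neq \emptyset$ to fix some $r \in S \cap \relint T$. Given arbitrary $s \in S$, form the approximating sequence
\[
s_n \coloneqq \tfrac{1}{n} r + \bigl(1 - \tfrac{1}{n}\bigr) s, \qquad n \in \N.
\]
Convexity of $S$ together with $s, r \in S$ immediately gives $s_n \in S$ for every $n$. On the other hand, since $S \subset T$ we have $s \in T$ and $r \in \relint T$, so the line-segment principle applied with $\theta = 1/n \in (0,1]$ yields $s_n \in \relint T$. Hence $s_n \in S \cap \relint T$ for each $n$, and since $s_n \to s$, this shows $s \in \cl(S \cap \relint T)$.

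I do not anticipate any real obstacle: the line-segment principle does essentially all of the work, and the only subtle point is the implicit closedness of $S$ needed for the trivial inclusion, which is satisfied in all intended applications.
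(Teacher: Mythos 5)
Your proof is correct, but it takes a genuinely different route from the paper's. You argue via the line-segment principle (Rockafellar's Theorem 6.1): fixing $r \in S \cap \relint T$ and an arbitrary $s \in S$, the points $s_n = \tfrac{1}{n}r + (1-\tfrac{1}{n})s$ lie in $S$ by convexity and in $\relint T$ by the accessibility lemma, giving an explicit sequence in $S \cap \relint T$ converging to $s$. The paper instead invokes \cite[Theorem 18.2]{rockafellar-book} (every relatively open subset of $T$ lies in the relative interior of some face of $T$) to deduce $\relint S \subset \relint T$, and then concludes via the identity $S = \cl\relint S$. Your argument is more elementary and self-contained; the paper's yields the structural byproduct $\relint S \subset \relint T$, which is sometimes useful in its own right. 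Both proofs share the same implicit hypothesis that $S$ is closed --- you flag it explicitly for the inclusion $\cl(S\cap\relint T)\subset S$, while the paper uses it silently through $S=\cl\relint S$ (which for a general convex set only gives $\cl S$) --- and in all the paper's applications $S$ is a dual cone, hence closed, so neither proof is harmed.
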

\begin{proof}
    Rockafellar's \cite[Theorem 18.2]{rockafellar-book} states that every relatively open set contained in $T$ is contained in the relative interior of some face of $T$.
    By our assumption $S \cap \relint T \neq \emptyset$, the only face of $T$ which contains $S$ is $T$ itself.
    Since $\relint S$ is obviously relatively open, we have $\relint S \subset \relint T$, and the claim follows by the identity $S = \cl \relint S$ for closed convex sets.
\end{proof}

\begin{proof}[Proof of Theorem \ref{thm:log_dual_x_sage_valid}]
    Use Rockafellar's \cite[Corollary 16.5.2]{rockafellar-book} to invoke Theorem  \ref{thm:primal_x_age_powercone} from a dual point of view, which gives
    $C_X(\cA,\beta)^* = \bigcap C_X(\cA,\lambda)^*$, where the intersection
    runs over all $\lambda \in \Lambda_X(\cA,\beta)$.
    Then Proposition \ref{prop:weighted_dual_x_age_powercone} implies
    \begin{equation}\label{eq:dual_x_sage_powercone}
        C_X(\cA)^* = \left\{ v \in \R^{\cA}_+ \,:\, \forall \, \lambda \in \Lambda_X(\cA),~ \beta \coloneqq \lambda^-,~ \exp(\sigma_X(-\cA \lambda)) \prod_{\alpha \in \lambda^+} v_\alpha^{\lambda_\alpha} \geq v_\beta  \right\}.
    \end{equation}
    We claim that $C_X(\cA)^*$ can be represented as the closure of its intersection with the positive orthant, that is,
    $C_X(\cA)^* = \cl\left( C_X(\cA)^* \cap \R^{\cA}_{++} \right)$.
    Since $C_X(\cA)$ contains all posynomials and is contained in the nonnegativity cone, the dual $C_X(\cA)^*$ contains the moment cone but is still contained in the nonnegative orthant.
    As we have assumed $X$ is nonempty, $C_X(\cA)^*$ must contain a point $\exp(\cA^T x) \in \R^{\cA}_{++}$, so $C_X(\cA)^* \cap \relint \R^\cA_{+} \neq \emptyset$.
    Applying Lemma \ref{lem:relint_containment} with $S = C_X(\cA)^*$ and $T = \R^\cA_+$ gives
    $C_X(\cA)^* = 
    \cl\left( C_X(\cA)^* \cap \relint \R^{\cA}_+ \right) =
    \cl\left( C_X(\cA)^* \cap \R^{\cA}_{++} \right)$.
      
    When considering $C_X(\cA)^*$ only over the positive orthant, the inequalities
    \[
    \exp(\sigma_X(-\cA \lambda)) \prod_{\alpha \in \lambda^+} v_\alpha^{\lambda_\alpha} \geq v_\beta 
    \]
    appearing in \eqref{eq:dual_x_sage_powercone}
    may be rewritten as
    \[
    \textstyle\sum_{\alpha \in \lambda^+} \lambda_\alpha \log v_\alpha - \log v_\beta + \sigma_X(-\cA \lambda) \equiv \phi_\lambda(y)  \geq 0,
    \]
    where we used $\lambda_\beta = -1$ and $y = \log v \in \R^{\cA}$.
   Hence,
    \begin{align*}
        C_X(\cA)^* 
        & = \cl\{ \exp(y)  \,:\,  \phi_\lambda(y) \geq 0 \; \: \forall \, \lambda  \in \Lambda_X(\cA) \} \\
        & = \cl\{ \exp(y)  \,:\, (y,1) ^T (\lambda, \tau) \geq 0 \; \: \forall  \, \lambda \in \Lambda_X(\cA),\, \tau \geq \sigma_X(-\cA \lambda) \} \\
        & = \cl\{ \exp(y)  \,:\, (y,1)^T(\nu,\tau) \geq 0 \; \: \forall \, (\nu,\tau)  \in G_X(\cA) \}.
    \end{align*}
    By the definition of the dual cone from convex analysis, the property
    $(y,1)^T(\nu,\tau) \geq 0 \; \: \forall \, (\nu,\tau)  \in G_X(\cA)$ is the same as 
    $(y,1) \in G_X(\cA)^*$. This completes the proof.
\end{proof}

The ability to represent $C_X(\cA)^*$ in terms of $G_X(\cA)^*$ is key to our proofs of Theorems \ref{thm:reduced_x_circuits} and  \ref{thm:polyhedra_reduced_x_circuits}.
Note that the theorem remains true when $G_X(\cA)$ is replaced by the smaller set $\cone\{\phi_\lambda \,:\, \lambda \in \Lambda_X(\cA) \}$, because the term $(\zerob,1)$ simply requires $(y,t) \in G_X(\cA)^*$ to have $t \geq 0$.

\subsection{Topological properties of the CG cone}\label{subsec:topology_of_circuit_graph}

We need some topological properties of the CG cone from 
Definition~\ref{def:circuit_graph}.

\begin{theorem}\label{thm:circuit_graph_generation}
    $G_X(\cA) = \cone\left(\{ \phi_\lambda \,:\, \lambda \in \Lambda_X^\star(\cA) \}\cup\{(\zerob,1)\}\right)$.
\end{theorem}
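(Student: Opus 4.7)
The inclusion $\cone\bigl(\{\phi_\lambda : \lambda \in \Lambda_X^\star(\cA)\} \cup \{(\zerob,1)\}\bigr) \subseteq G_X(\cA)$ is immediate from Definitions \ref{def:circuit_graph} and \ref{def:reduced_circuits}, since $\Lambda_X^\star(\cA) \subseteq \Lambda_X(\cA)$. My plan for the reverse inclusion is to verify that $G_X(\cA)$ is a closed, pointed convex cone in $\R^{\cA} \times \R$ and then invoke the standard finite-dimensional fact that such a cone equals the conic hull of its extreme rays; identifying those extreme rays will then finish the proof by appeal to the definition of $\Lambda_X^\star(\cA)$.

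I would begin by decomposing
\[
G_X(\cA) \;=\; \R_+ (\zerob,1) \,+\, \sum_{\beta \in \cA} K_\beta, \qquad K_\beta \coloneqq \cone\{\phi_\lambda : \lambda \in \Lambda_X(\cA,\beta)\},
\]
and recalling from Theorem \ref{thm:characterize_circuits} that each $K_\beta$ is a closed, pointed convex cone whose extreme rays are exactly the $\R_+\phi_\lambda$ for $\lambda \in \Lambda_X(\cA,\beta)$. For pointedness of the sum, suppose $(\nu,t) \in G_X(\cA) \cap (-G_X(\cA))$, expand both $(\nu,t)$ and $-(\nu,t)$ in the above decomposition, and add: writing $\nu^{\beta}$ for the $\R^{\cA}$-projection of the summed $K_\beta$-contribution, the identity $\sum_{\beta \in \cA} \nu^{\beta} = \zerob$ admits a coordinate-by-coordinate analysis using the sign pattern of $N_\beta$. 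The $\beta$-coordinate of $\nu^{\beta}$ is nonpositive (since $\lambda_\beta = -1$ for every $\lambda \in \Lambda_X(\cA,\beta)$) while $(\nu^{\beta'})_{\beta} \geq 0$ for $\beta' \neq \beta$, so each coordinate contribution must vanish; combined with $\mathds{1}^T \nu^{\beta} = 0$ and $\nu^{\beta}_{\setminus \beta} \geq \zerob$, this forces $\nu^{\beta} = \zerob$. The scalar components $t^{\beta}$ are then pinned down via the auxiliary fact $K_\beta \cap (\{\zerob\} \times \R) = \{(\zerob,0)\}$, which holds because a nonnegative combination of $\lambda \in \Lambda_X(\cA,\beta)$ projecting to $\zerob$ is forced to be trivial (again using $\lambda_\beta = -1$).

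Closedness of $G_X(\cA)$ then follows by invoking Rockafellar's Corollary 9.1.3 to the finite collection of closed convex cones $\{K_\beta\}_{\beta \in \cA} \cup \{\R_+(\zerob,1)\}$, whose required recession-cone condition is precisely what the pointedness argument established. With closedness and pointedness in hand, $G_X(\cA)$ equals the conic hull of its extreme rays. Since $G_X(\cA)$ is the conic hull of its generators, every extreme ray is spanned by such a generator, hence by either $(\zerob,1)$ or some $\phi_\lambda$ with $\lambda \in \Lambda_X(\cA)$; in the latter case $\lambda \in \Lambda_X^\star(\cA)$ by Definition \ref{def:reduced_circuits}, yielding the reverse inclusion. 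I expect the main obstacle to be the pointedness verification, where the interaction between the sign pattern of $N_{\beta}$ and the normalization $\lambda_\beta = -1$ is what drives the coordinatewise cancellation.
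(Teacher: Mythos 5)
Your overall skeleton is the same as the paper's (show $G_X(\cA)$ is closed and pointed, conclude it equals the conic hull of its extreme rays, and note that every extreme ray must be spanned by one of the listed generators, hence by $(\zerob,1)$ or by $\phi_\lambda$ with $\lambda \in \Lambda_X^\star(\cA)$); the easy inclusion and the final identification of extreme rays are fine. The genuine gap is in your pointedness argument. From $\sum_{\beta\in\cA}\nu^{\beta} = \zerob$ with $\nu^{\beta} \in N_\beta$ you infer that ``each coordinate contribution must vanish'' because the $\beta$-coordinate of $\nu^{\beta}$ is nonpositive while $(\nu^{\beta'})_{\beta} \geq 0$ for $\beta' \neq \beta$. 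But one nonpositive number plus several nonnegative numbers can sum to zero without all of them vanishing. Concretely, take $\cA = \{\alpha_1,\alpha_2\}$ with $\alpha_1<\alpha_2$ and $X=[0,1]$: then $\lambda^{(1)} = \delta_{\alpha_2}-\delta_{\alpha_1} \in \Lambda_X(\cA,\alpha_1)$ and $\lambda^{(2)} = \delta_{\alpha_1}-\delta_{\alpha_2} \in \Lambda_X(\cA,\alpha_2)$ are both normalized $X$-circuits (each $N_{\alpha_i}$ is a ray, so condition (3) is vacuous), both are nonzero, and $\lambda^{(1)}+\lambda^{(2)} = \zerob$. So the projected cone $\sum_{\beta}N_\beta$ is not pointed, and the sign pattern of the $N_\beta$ alone cannot force $\nu^{\beta}=\zerob$. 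What actually saves $G_X(\cA)$ is the last coordinate: in this example $\phi_{\lambda^{(1)}}+\phi_{\lambda^{(2)}} = \left(\zerob,\,\sigma_X(\alpha_1-\alpha_2)+\sigma_X(\alpha_2-\alpha_1)\right)$, and the scalar part is strictly positive precisely because $\e^{\alpha_1}$ and $\e^{\alpha_2}$ are linearly independent on $X$ (the paper's standing assumption). Pointedness of $G_X(\cA)$ is therefore not a combinatorial fact about the $N_\beta$; it depends on the support-function component together with full-dimensionality of the moment cone, which is how the paper proves it (Lemma \ref{lem:circuit_graph_pointed} shows $G_X(\cA)^*$ is full-dimensional via Theorem \ref{thm:log_dual_x_sage_valid}).

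The closedness step inherits the same gap: the hypothesis of Rockafellar's closedness criterion for the finite sum $\R_+(\zerob,1)+\sum_\beta K_\beta$ is exactly the statement that $\zerob = z_0 + \sum_\beta z_\beta$ with $z_0 \in \R_+(\zerob,1)$, $z_\beta \in K_\beta$ forces all summands to vanish, which is what your coordinate analysis was supposed to deliver. That recession condition is in fact true under the paper's assumptions, but proving it requires the sublinearity estimate $\sum_\beta t^{\beta} \geq \sum_\beta \sigma_X(-\cA\nu^{\beta}) \geq \sigma_X\left(-\cA\sum_\beta\nu^{\beta}\right) = 0$ followed by a strictness argument drawn from the linear-independence assumption; the paper avoids the sum decomposition altogether by exhibiting a compact generating set bounded away from the origin and applying Proposition \ref{prop:compactly_generated_pointed_cones_closed} (Lemma \ref{lem:circuit_graph_closed}). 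To repair your route you would need to carry out that support-function estimate explicitly; otherwise you should fall back on the paper's duality-plus-compactness argument.
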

The proof of this theorem essentially reduces to showing that $G_X(\cA)$ is pointed and closed.
The pointedness of the CG cone is easy to show, but closedness is a more delicate matter.
In fact -- our proof that $G_X(\cA)$ is closed relies on the fact that it is pointed.
We therefore prove pointedness before discussing closedness any further.

\begin{lemma}\label{lem:circuit_graph_pointed}
   The closure of the CG cone contains no lines.
\end{lemma}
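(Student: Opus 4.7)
The plan is to show that $G_X(\cA)^*$ is full-dimensional in $\R^{\cA} \times \R$, which by bipolar duality is equivalent to $\cl G_X(\cA) = (G_X(\cA)^*)^*$ containing no lines. The standing linear independence assumption on $\{\e^\alpha\}_{\alpha \in \cA}$ makes the moment cone $\cone\{\exp(\cA^T x) : x \in X\}$ full-dimensional in $\R^{\cA}$; since $C_X(\cA)$ is contained in the $X$-nonnegativity cone, $C_X(\cA)^*$ contains the moment cone and hence has nonempty interior in $\R^{\cA}$.

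Now let $A = \{y \in \R^{\cA} : (y, 1) \in G_X(\cA)^*\}$. This slice is convex and nonempty: for any $x_0 \in X$, the vector $\cA^T x_0$ lies in $A$, since $\phi_\lambda(\cA^T x_0) = x_0^T \cA \lambda + \sigma_X(-\cA \lambda) \ge 0$ by the definition of $\sigma_X$. From Theorem \ref{thm:log_dual_x_sage_valid} one reads off $C_X(\cA)^* = \cl \exp(A)$, with $\exp$ acting componentwise. The crucial step is to show $A$ has nonempty interior in $\R^{\cA}$. Assume for contradiction that it does not; then by convexity $A$ lies in some affine hyperplane $\{y : a^T y = b\}$ with $a \ne \zerob$, and componentwise exponentiation places $\exp(A)$ inside the level set
\[
L \;=\; \left\{z \in \R^{\cA}_{++} : \textstyle\prod_{\alpha \in \cA} z_\alpha^{a_\alpha} = \e^b\right\},
\]
which is relatively closed in $\R^{\cA}_{++}$. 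A point $z^\star$ in the interior of $C_X(\cA)^*$ satisfies $z^\star \in \R^{\cA}_{++}$ (as $C_X(\cA)^* \subseteq \R^{\cA}_+$), and some Euclidean ball $B$ around $z^\star$ lies in $\interior C_X(\cA)^* \cap \R^{\cA}_{++} \subseteq \cl\exp(A) \cap \R^{\cA}_{++} \subseteq L$. But $L$ is the level set of a non-constant smooth function on $\R^{\cA}_{++}$ and therefore contains no open ball, a contradiction.

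Once $A$ has nonempty interior, the slice $A \times \{1\}$ is full-dimensional inside $\R^{\cA} \times \{1\}$, so $G_X(\cA)^*$ is itself full-dimensional in $\R^{\cA} \times \R$, completing the argument. The main obstacle is precisely the componentwise exponential appearing in Theorem \ref{thm:log_dual_x_sage_valid}: because $\exp$ is nonlinear, the identity $C_X(\cA)^* = \cl\exp(A)$ does not transfer dimension information directly, and one must use the geometric observation that proper monomial level surfaces cannot swallow an open ball in $\R^{\cA}_{++}$ to rule out a degenerate $A$.
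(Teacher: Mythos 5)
Your proof is correct and follows the same overall strategy as the paper: establish that $G_X(\cA)^*$ is full-dimensional by pushing the full-dimensionality of the moment cone through $C_X(\cA)^*$ and the slice $\{y : (y,1)\in G_X(\cA)^*\}$, then conclude via the bipolar. The difference is in rigor at the key step. The paper asserts that $\dim C_X(\cA)^* = m$ forces the preimage slice to have dimension $m$ ``by continuity of the exponential function,'' and then explicitly states ``we claim without proof'' that the induced cone of a full-dimensional set is full-dimensional. You supply a genuine argument for the first of these: if the slice $A$ were contained in a hyperplane $\{y : a^T y = b\}$, then $\exp(A)$ would sit inside the monomial level surface $\{z \in \R^{\cA}_{++} : \prod_\alpha z_\alpha^{a_\alpha} = \e^b\}$, which is relatively closed in the open orthant and contains no open ball, contradicting the existence of an interior point of $C_X(\cA)^*$ in $\R^{\cA}_{++}$. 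You also replace the induced-cone claim with the cleaner observation that the conic hull of a full-dimensional set at height $1$ is full-dimensional. Both refinements are sound; your version is the more complete proof.
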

\begin{proof}
    We focus on proving $G_X(\cA)^*$ is full-dimensional.
    Let $|\cA| = m$.
    We assumed at the outset of the article that the moment cone $M_X(\cA) \coloneqq \cone\{\exp(\cA^T x) \,:\, x \in X\}$ was full-dimensional, i.e., $\dim M_X(\cA) = m$; we use that assumption in this lemma.
    Specifically, since $C_X(\cA)$ is contained within the nonnegativity cone, we have that $M_X(\cA) \subset C_X(\cA)^*$ and so $\dim C_X(\cA)^* = m$.
    By Theorem \ref{thm:log_dual_x_sage_valid} and continuity of the exponential function, we see that if $\dim C_X(\cA)^* = m$, then the preimage
    $S \coloneqq \{ y \,:\, (y,1) \in G_X(\cA)^* \}$
    likewise has dimension $m$.
    Consider the induced cone associated with $S$:
    \[
    \indco S = \cl\{(y,t) \,:\, t > 0,\, y/t \in S \} = \cl\{ (y,t) \,:\, t > 0,\, (y,t) \in G_X(\cA)^* \}.
    \]
    The rightmost expression in the above display tells us $\indco S \subset G_X(\cA)^*$.
    We claim without proof that since $S$ is a full-dimensional convex set, $\indco S$ is similarly full-dimensional.
    Taking this claim as given, $\indco S \subset G_X(\cA)^*$ implies $G_X(\cA)^*$ is full-dimensional.
    Because $G_X(\cA)^*$ is full-dimensional, $\cl G_X(\cA) = G_X(\cA)^{**} \supset G_X(\cA)$ contains no lines.
\end{proof}

In the special case where $X$ is a polyhedron, closedness of $G_X(\cA)$ follows from Theorem \ref{thm:polyhedron_x_finite_circuits}, which tells us that $\Lambda_X(\cA)$ is finite.
To prove closedness for arbitrary convex sets $X$ we need to more carefully appeal to properties of the generating set $\{ \phi_\lambda \,:\, \lambda \in \Lambda_X(\cA) \} \cup \{(\zerob,1)\}$.

\begin{lemma}\label{lem:circuit_graph_closed}
    The CG cone is closed.
\end{lemma}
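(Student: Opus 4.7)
The plan is to exhibit $G_X(\cA)$ as the conic hull of a compact set $B$ with $0 \notin \conv B$, whereupon closedness will follow from \cite[Corollary 9.6.1]{rockafellar-book} (the same tool used to prove closedness of $\cone T$ in Theorem~\ref{thm:characterize_circuits}). The direct approach of taking $B = \{\phi_\lambda : \lambda \in \Lambda_X(\cA)\}$ (suitably normalized) is bound to fail in general: although normalized circuits $\lambda \in \Lambda_X(\cA,\beta)$ are automatically bounded, the scalar $\sigma_X(-\cA\lambda)$ may be unbounded as $\lambda$ ranges over $\Lambda_X(\cA,\beta)$, so the set of augmented vectors $\phi_\lambda$ is typically not compact.

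The first step will be to realize the circuit graph as a Minkowski sum of the pointed closed cones supplied by Theorem~\ref{thm:characterize_circuits}. For each $\beta \in \cA$, that theorem asserts that $\cone T_\beta$ (with $T_\beta$ as defined there) is closed and pointed with edge generators exactly the vectors $\phi_\lambda$ for $\lambda \in \Lambda_X(\cA,\beta)$. A pointed closed convex cone in finite dimension coincides with the conic hull of its edge generators (Minkowski's theorem applied to any compact base), so $\cone T_\beta = \cone\{\phi_\lambda : \lambda \in \Lambda_X(\cA,\beta)\}$, and partitioning $\Lambda_X(\cA)$ according to the location of the negative coordinate gives
$$G_X(\cA) \;=\; \sum_{\beta \in \cA} \cone T_\beta \;+\; \cone\{(\zerob,1)\}.$$

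The second step uses that the base $B_\beta \coloneqq \cone T_\beta \cap S^{d-1}$ (with $d = |\cA|+1$) is compact precisely because $\cone T_\beta$ is closed and pointed, and still satisfies $\cone B_\beta = \cone T_\beta$. Setting $B \coloneqq \bigcup_{\beta \in \cA} B_\beta \cup \{(\zerob,1)\}$ we obtain a compact set with $G_X(\cA) = \cone B = \cone(\conv B)$, and $\conv B$ is compact by Carath\'eodory. The closedness result from \cite[Corollary 9.6.1]{rockafellar-book} then reduces the lemma to verifying $0 \notin \conv B$.

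The final step will invoke Lemma~\ref{lem:circuit_graph_pointed}. If $0 = \sum_i \alpha_i b_i$ were a convex combination of vectors $b_i \in B$ with all $\alpha_i > 0$, then since each $b_i$ has unit norm at least two terms must be active, and for any such $j$ we could solve $-b_j = \sum_{i \neq j}(\alpha_i/\alpha_j) b_i \in \cone B = G_X(\cA)$; together with $b_j \in G_X(\cA)$ this would place the line $\R b_j$ inside $G_X(\cA) \subset \cl G_X(\cA)$, contradicting Lemma~\ref{lem:circuit_graph_pointed}. The main conceptual obstacle is the observation driving the first step --- namely that the natural unbounded generators $\phi_\lambda$ should be replaced by a compact base of the already-known-closed cones $\cone T_\beta$, so that Theorem~\ref{thm:characterize_circuits} can be used as a black box --- after which the remaining arguments are essentially routine.
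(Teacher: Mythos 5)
Your proof is correct and follows essentially the same strategy as the paper's: both reduce to the per-$\beta$ closed pointed cones $\cone T_\beta$ from Theorem~\ref{thm:characterize_circuits}, extract a compact base from each (you take a spherical slice where the paper takes the hyperplane slice $\{\phi_\lambda \in T_\beta : \lambda_\beta = -1\}$), adjoin $(\zerob,1)$, and conclude via pointedness of $\cl G_X(\cA)$ from Lemma~\ref{lem:circuit_graph_pointed} that the conic hull of the resulting compact set is closed. The paper packages the last step as Proposition~\ref{prop:compactly_generated_pointed_cones_closed} in the appendix, whereas you re-derive it from \cite[Corollary 9.6.1]{rockafellar-book} together with an explicit argument that $\zerob \notin \conv B$; these are interchangeable.
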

\begin{proof}
    Let $S_\beta = \{ (\lambda,\sigma_X(-\cA \lambda)) \,:\, \lambda \in \Lambda_X(\cA,\beta) \}$.
    By Theorem \ref{thm:characterize_circuits}, the elements $\phi_\lambda \in S_\beta$ are edge generators for the closed convex cone $T_\beta = \cone\{ (\nu,\sigma_X(-\cA \nu)) \,:\, \nu \in N_\beta,\, \sigma_X(-\cA \nu) < \infty\}$.
    From $S_\beta$ we form $S'_\beta \coloneqq \conv S_\beta$, and find
    $S'_\beta$ is isomorphic to 
    $S'_\beta = \{ \phi_\lambda \in T_\beta \,:\, \lambda_{\beta} = -1\}$.
    Because $S_\beta$ is bounded, $S'_\beta$ is likewise bounded.
    Because $S'_\beta$ is a slice of a closed convex cone $T_\beta$, we have that $S'_\beta$ is closed.
    Therefore we conclude $S'_\beta$ is compact.
    
    Now define $S' = (\bigcup_{\beta \in \cA} S'_\beta) \cup \{(\zerob,1)\}$.
    The set $S'$ is a compact generating set for $G_X(\cA)$ which does not contain the origin.
    Since $\cl G_X(\cA)$ is known to contain no lines (Lemma \ref{lem:circuit_graph_pointed}), we apply Proposition \ref{prop:compactly_generated_pointed_cones_closed} to $S'$, $\cone S'$ to infer that $\cone S' = G_X(\cA)$ is closed.
\end{proof}

\begin{proof}[Proof of Theorem \ref{thm:circuit_graph_generation}]
    Lemmas \ref{lem:circuit_graph_pointed} and \ref{lem:circuit_graph_closed} show $G_X(\cA)$ is closed and pointed.
    By \cite[Corollary 18.5.2]{rockafellar-book}, we have that $G_X(\cA)$ may be expressed as the conic hull of any set of vectors containing all of its extreme rays.
    Since $S = \{ \phi_\lambda \,:\, \lambda \in \Lambda_X(\cA)\} \cup \{(\zerob,1)\}$ is a generating set for $G_X(\cA)$, it must contain all extreme rays of $G_X(\cA)$.
    However, by definition of $\Lambda_X^\star(\cA)$, if $\lambda$ does not belong to $\Lambda_X^\star(\cA)$, then $\phi_\lambda \in S$ does not generate an extreme ray of $G_X(\cA)$.
    We may therefore form $T = S \setminus \{ \phi_\lambda \,:\, \lambda \not\in \Lambda_X^\star(\cA) \}$ and still find
    $G_X(\cA) = \cone T$.
    This proves the theorem.
\end{proof}

\subsection{Proof of Theorem \ref{thm:reduced_x_circuits}.}\label{subsec:x_circuits_sage_thm1}

\begin{proof}[Proof of Theorem \ref{thm:reduced_x_circuits}]
    Using the representation $G_X(\cA) = \cone\left(\{ \phi_\lambda \,:\, \lambda \in \Lambda_X^\star(\cA) \}\cup\{(\zerob,1)\}\right)$ provided by Theorem \ref{thm:circuit_graph_generation}, we can express
    \begin{equation}\label{eq:repr_log_dual_reduced_circuits}
        (y,1) \in G_X(\cA)^* \Leftrightarrow (y,1)^T(\lambda,\sigma_X(-\cA \lambda)) \geq 0 ~\forall~ \lambda \in \Lambda_X^\star(\cA).
    \end{equation}
    We obtain the following refinement of Equation \eqref{eq:dual_x_sage_powercone}, by combining \eqref{eq:repr_log_dual_reduced_circuits} with Theorem \ref{thm:log_dual_x_sage_valid}:
    \begin{equation}\label{eq:repr_dual_sage_reduced_circuits}
        C_X(\cA)^* = \left\{ v \in \R^{\cA}_+ \,:\, \forall \, \lambda \in \Lambda_X^\star(\cA),~ \beta \coloneqq \lambda^-,~ \exp(\sigma_X(-\cA \lambda)) \prod_{\alpha \in \lambda^+} v_\alpha^{\lambda_\alpha} \geq v_\beta  \right\}.
    \end{equation}
    Of course, Equation \eqref{eq:repr_dual_sage_reduced_circuits} can be written as $C_X(\cA)^* = \bigcap_{\lambda \in \Lambda_X^\star(\cA)} C_X(\cA,\lambda)^*$. We appeal to conic duality principles (again, \cite[Corollary 16.5.2]{rockafellar-book}) to obtain the claim of the theorem.
\end{proof} 

\subsection{Proof of Theorem \ref{thm:polyhedra_reduced_x_circuits}}\label{subsec:x_circuits_sage_thm2}

A conceptual message from the last section is that it can be very useful to analyze $C_X(\cA)$ in terms of the vectors $y$ where $\exp y$ belongs to $C_X(\cA)^*$.
This section will hammer that message home.
We begin with the lemma that ultimately led us to define $G_X(\cA)$ as per Definition \ref{def:circuit_graph}, rather than as the simpler set $\cone\{\phi_\lambda \,:\, \lambda \in \Lambda_X(\cA)\}$.

\begin{lemma}\label{lem:separate_log_moment_by_circuit}
    If $X$ is polyhedral and $\Lambda \subsetneq \Lambda_X^\star(\cA)$, then there must exist a $\tilde{y} \in \R^{\cA}$ satisfying $\phi_{\lambda'}(\tilde{y}) \geq 0$ for all $ \lambda' \in \Lambda$, yet for some $\lambda \in \Lambda_X^\star(\cA) \setminus \Lambda$ we have $\phi_\lambda(\tilde{y}) < 0$.
\end{lemma}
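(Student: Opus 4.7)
The plan is to fix $\lambda \in \Lambda_X^\star(\cA) \setminus \Lambda$ and construct $\tilde y$ via cone separation applied to the auxiliary cone $G' \coloneqq \cone(\{\phi_{\lambda'} \,:\, \lambda' \in \Lambda\} \cup \{(\zerob,1)\})$. Since $X$ is polyhedral, Theorem \ref{thm:polyhedron_x_finite_circuits} ensures $\Lambda_X^\star(\cA)$ is finite, so $G'$ is a finitely generated polyhedral cone inside $G_X(\cA)$, and in particular is closed.

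First I would show $\phi_\lambda \notin G'$. Suppose toward a contradiction that $\phi_\lambda = \sum_{\lambda' \in \Lambda} c_{\lambda'} \phi_{\lambda'} + c_0(\zerob,1)$ with nonnegative coefficients. By Definition \ref{def:reduced_circuits}, $\phi_\lambda$ generates an extreme ray of $G_X(\cA) \supseteq G'$, so each nonzero summand must be a positive multiple of $\phi_\lambda$. But $(\zerob,1)$ is not a positive multiple of $\phi_\lambda$ (since $\lambda \neq \zerob$), and no $\phi_{\lambda'}$ with $\lambda' \in \Lambda$ can be one either: positive proportionality combined with the common normalization $\lambda_\beta = \lambda'_{\beta'} = -1$ forces $\lambda' = \lambda$, contradicting $\lambda \notin \Lambda$.

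Second, because $G'$ is closed convex and $\phi_\lambda \notin G'$, standard cone separation yields $(z,s) \in (G')^*$ with $(z,s)^T \phi_\lambda < 0$. The generator $(\zerob,1) \in G'$ automatically gives $s \geq 0$. If $s > 0$, the vector $\tilde y \coloneqq z/s$ works: $\phi_{\lambda'}(\tilde y) = (1/s)\,(z,s)^T \phi_{\lambda'} \geq 0$ for every $\lambda' \in \Lambda$, while $\phi_\lambda(\tilde y) < 0$.

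The main obstacle is the degenerate case $s = 0$, which I would handle by perturbation. Since $G' \subseteq G_X(\cA)$ and $G_X(\cA)$ is pointed (Lemma \ref{lem:circuit_graph_pointed}), the cone $G'$ is pointed, so $(G')^*$ is full-dimensional. Any interior point $(z_0, s_0)$ of $(G')^*$ must satisfy $s_0 > 0$, since $(G')^* \subseteq \{s \geq 0\}$ and interior points cannot lie on the boundary of this half-space. For sufficiently small $\epsilon > 0$, the perturbed vector $(z + \epsilon z_0,\; \epsilon s_0)$ lies in $(G')^*$, has positive final coordinate $\epsilon s_0$, and pairs strictly negatively with $\phi_\lambda$ (the strictly negative term $z^T \lambda$ dominates the $\epsilon$-term). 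Setting $\tilde y \coloneqq (z + \epsilon z_0)/(\epsilon s_0)$ then completes the construction.
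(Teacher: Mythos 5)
Your proof is correct and follows essentially the same route as the paper's: both arguments hinge on the fact that extremality of $\phi_\lambda$ in $G_X(\cA)$ forces $\phi_\lambda \notin \cone\bigl(\{\phi_{\lambda'} : \lambda' \in \Lambda\}\cup\{(\zerob,1)\}\bigr)$, followed by polyhedral cone separation and a step ensuring the separating functional has positive last coordinate. The only cosmetic difference is that the paper separates on the dual side (choosing a relative-interior point of the larger dual cone outside the smaller one), whereas you separate $\phi_\lambda$ from the primal cone and repair the degenerate case $s=0$ by perturbing toward an interior point of $(G')^*$; you also spell out the extreme-ray decomposition argument that the paper leaves implicit.
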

\begin{proof}
    Let $T_1 = \{ \phi_\lambda \,:\, \lambda \in \Lambda_X^\star(\cA)\} \cup \{ (\zerob,1) \}$ and $T_2 = \{ \phi_\lambda \,:\, \lambda \in \Lambda \} \cup \{(\zerob,1)\}$.
    Of course, a vector $\tilde{y}$ satisfies $\phi_{\lambda'}(\tilde{y}) \geq 0$ for all $\lambda' \in \Lambda$ if and only if $(\tilde{y},1) \in (\cone T_2)^*$.
    We will show that given the polyhedrality of $X$ and the assumption on $\Lambda$, there exists a vector $\tilde{y}$ for which $(\tilde{y},1) \in (\cone T_2)^* \setminus (\cone T_1)^*$.
    The result will follow since membership of vectors $(y,1) \in (\cone T_1)^*$ is equivalent to $\phi_\lambda(y) \geq 0$ for all $\lambda \in \Lambda_X^\star(\cA)$.
    
    Since $X$ is polyhedral, the cones $T_1$ and $T_2$ are also polyhedral (both are finitely generated by Theorem \ref{thm:polyhedron_x_finite_circuits}).
    Meanwhile, Theorem \ref{thm:circuit_graph_generation} tells us that $G_X(\cA) = \cone T_1$, and the definition of reduced circuits is such that every $\phi_\lambda \in T_1 \setminus \{(\zerob,1)\}$ generates an extreme ray in $G_X(\cA)$.
    Since $\Lambda \subsetneq \Lambda_X^\star(\cA)$, there exists a 
    $\phi_\lambda \in T_1 \setminus T_2$ which generates an extreme ray of $G_X(\cA)$.
    Therefore $\cone T_2$ is a strict subset of $\cone T_1 \equiv G_X(\cA)$.
    We may take dual cones to find $(\cone T_2)^* \supsetneq (\cone T_1)^*$.
    Note that since $T_1$ and $T_2$ contain $\{(\zerob,1)\}$, the dual cones must be contained in $K = \R^{\cA} \times \R_+$.
    Furthermore, since $X$ is presumed nonempty, Theorem \ref{thm:log_dual_x_sage_valid} tells us there exists a point $(y,1) \in (\cone T_1)^*$, so the relative interiors of $(\cone T_1)^*$ and $(\cone T_2)^*$ are contained within the relative interior of $K$.
    As our last step, use the fact that if one closed polyhedral cone strictly contains another closed polyhedral cone, then there exists a point in the relative interior of the larger cone which may be separated from the smaller cone; apply this to $(\cone T_2)^* \supsetneq (\cone T_1)^*$ to find a point $(y',t') \in \relint((\cone T_2)^*) \setminus (\cone T_1)^*$ with $t' > 0$.
    From this $(y',t')$ we rescale $\tilde{y} = y' / t'$ so that $(\tilde{y},1) \in (\cone T_2)^* \setminus (\cone T_1)^*$.
\end{proof}

\begin{remark}
    We take a moment to unpack the technical dependencies in 
    Lemma \ref{lem:separate_log_moment_by_circuit}.
    We explicitly cited Theorem \ref{thm:circuit_graph_generation}.
    Our proof of that result relied on Lemma \ref{lem:circuit_graph_closed}, which states that the CG cone is closed, and which we proved by appeal to Theorem \ref{thm:characterize_circuits}.
    However, when $X$ is a polyhedron, Lemma \ref{lem:circuit_graph_closed} can alternatively be proven by appeal to Theorem \ref{thm:polyhedron_x_finite_circuits}.
\end{remark}

Our next lemma shows how to take a condition stated in terms of Lemma \ref{lem:separate_log_moment_by_circuit}, and deduce a statement about $C_X(\cA)^*$.
The lemma's proof requires only that $X$ be nonempty and convex.

\begin{lemma}\label{lem:exponentiate_separating_hyperplane}
    If $\tilde{y} \in \R^{\cA}$ satisfies $\phi_\lambda(\tilde{y}) < 0$ for some $\lambda \in \Lambda_X(\cA)$, then $\exp \tilde{y} \not\in C_X(\cA)^*$.
\end{lemma}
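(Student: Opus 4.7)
The plan is to use the $\lambda$-witnessed AGE cone as a ``sandwich'' layer between $C_X(\cA)$ and the individual power-cone inequality that $\lambda$ encodes. By Theorem \ref{thm:primal_x_age_powercone}, every $\lambda \in \Lambda_X(\cA)$ satisfies $C_X(\cA, \lambda) \subseteq C_X(\cA, \lambda^-) \subseteq C_X(\cA)$, so after taking duals I get the inclusion $C_X(\cA)^\ast \subseteq C_X(\cA, \lambda)^\ast$. Consequently, to show $\exp \tilde y \notin C_X(\cA)^\ast$ it suffices to show $\exp \tilde y \notin C_X(\cA, \lambda)^\ast$ for the particular $\lambda$ with $\phi_\lambda(\tilde y) < 0$.

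Next, I would apply the explicit description of the dual $\lambda$-witnessed AGE cone from Proposition \ref{prop:weighted_dual_x_age_powercone}: with $\beta \coloneqq \lambda^-$, the vector $v = \exp\tilde y$ lies in $C_X(\cA, \lambda)^\ast$ if and only if $v \in \R^{\cA}_+$ and
\[
    \exp(\sigma_X(-\cA \lambda)) \prod_{\alpha \in \lambda^+} v_\alpha^{\lambda_\alpha} \;\geq\; v_\beta.
\]
Since $v = \exp \tilde y$ has strictly positive entries, I can take logarithms on both sides without issue. Using $\lambda_\beta = -1$ and the fact that $\lambda_\alpha = 0$ for $\alpha \notin \supp \lambda$, the log-transformed inequality becomes
\[
    \sigma_X(-\cA \lambda) + \sum_{\alpha \in \cA} \tilde y_\alpha \lambda_\alpha \;\geq\; 0,
\]
which is exactly $\phi_\lambda(\tilde y) \geq 0$.

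Combining the two steps, the hypothesis $\phi_\lambda(\tilde y) < 0$ rules out membership of $\exp \tilde y$ in $C_X(\cA, \lambda)^\ast$, and hence also in the smaller cone $C_X(\cA)^\ast$. There is no genuine obstacle here beyond carefully bookkeeping the logarithm/exponentiation and the sign convention $\lambda_\beta = -1$; the real work has already been done in Proposition \ref{prop:weighted_dual_x_age_powercone}, and the lemma itself is essentially a direct translation of its power-cone inequality into the exponential coordinates $\tilde y$.
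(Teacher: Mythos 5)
Your proof is correct, but it takes a genuinely different route from the paper's. You reduce the lemma to the dual inclusion $C_X(\cA)^* \subseteq C_X(\cA,\lambda)^*$ (which follows from Theorem \ref{thm:primal_x_age_powercone}, or even more directly from Equation \eqref{eq:dual_x_sage_powercone} in the proof of Theorem \ref{thm:log_dual_x_sage_valid}) and then observe that on the strictly positive vector $v = \exp\tilde y$ the power-cone inequality of Proposition \ref{prop:weighted_dual_x_age_powercone} is, after taking logarithms, exactly the condition $\phi_\lambda(\tilde y) \geq 0$; all the steps check out, including the applicability of Proposition \ref{prop:weighted_dual_x_age_powercone} (a normalized $X$-circuit satisfies $\lambda_\beta = -1$ and $\sigma_X(-\cA\lambda) < \infty$ by definition), and there is no circularity since the results you invoke precede the lemma. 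The paper instead constructs an explicit \emph{linear} separating functional $z \in \R^{\cA}$ with $z^T v \geq 0$ for all $v \in C_X(\cA)^*$ and $z^T \exp\tilde y < 0$, using convexity of the exponential to linearize the power-cone inequality around $\tilde y$. For the lemma as stated, and for its use in Theorem \ref{thm:polyhedra_reduced_x_circuits} (where non-membership in the closed convex set $C_X(\cA)^*$ is all that is needed, since that is equivalent to separability), your shorter argument suffices. What the paper's constructive proof buys is the explicit hyperplane itself, which the authors advertise in the introduction as a tool of independent interest for simplifying systems of power-cone constraints on the nonnegative orthant; your argument certifies non-membership without exhibiting such a certificate.
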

\begin{proof}
    We will find a vector $z \in \R^{\cA}$ where $0 \leq z^T \exp y$ for all $\exp y \in C_X(\cA)^*$, and yet $z^T \exp \tilde{y} < 0$.
    By continuity, the condition that $0 \leq z^T \exp y$ for all $\exp y \in C_X(\cA)^*$ will imply the slightly stronger statement that $0 \leq z^T v$ for all $v \in C_X(\cA)^*$.
    Therefore $z$ will evidently serve as a separating hyperplane to prove the desired claim.
    Let $\beta \coloneqq \lambda^-$.
    
    Since $\lambda \in \Lambda_X(\cA)$, Theorem \ref{thm:log_dual_x_sage_valid} says that $\phi_\lambda(y) \geq 0$ whenever $\exp y \in C_X(\cA)^*$.
    Combine $\phi_\lambda(\tilde{y}) < 0$ with strict monotonicity of the exponential function to conclude
    \begin{equation}\label{eq:sep_hyp_y}
    \exp(\phi_\lambda(\tilde{y})) < 1 \leq \exp(\phi_\lambda(y)) \quad\text{ for all }\quad \exp y \in C_X(\cA)^*.
    \end{equation}
    Notice that taking a difference $\phi_\lambda(y) - \phi_\lambda(\tilde{y}) = \lambda_{\setminus \beta}^T(y_{\setminus \beta} + \tilde{y}_{\setminus \beta}) - y_\beta + \tilde{y}_\beta$ eliminates the support function term appearing in $\phi_\lambda$.
    Defining $u = \phi_\lambda(\tilde{y})$, we multiply both sides of the non-strict inequality in \eqref{eq:sep_hyp_y} by $\exp(-u-\tilde{y}_\beta+y_\beta)$ to obtain
    \begin{align}
    0 &\leq \exp\left(\lambda_{\setminus \beta}^T(y_{\setminus \beta}-\tilde{y}_{\setminus \beta})\right) - \exp(-u-\tilde{y}_\beta+y_\beta). \label{eq:sep_hyp_expy_1}
    \end{align}
    Convexity of the exponential function tells us that $\exp\left(\lambda_{\setminus \beta}^T(y_{\setminus \beta}-\tilde{y}_{\setminus \beta})\right) \leq \lambda_{\setminus \beta}^T\exp(y_{\setminus \beta}-\tilde{y}_{\setminus \beta})$,
    where the right-hand-side may be rewritten using the Hadamard product
    \[
    \lambda_{\setminus \beta}^T\exp(y_{\setminus \beta}-\tilde{y}_{\setminus \beta}) = 
    \left(\lambda_{\setminus \beta} \circ \exp(-\tilde{y}_{\setminus \beta})\right)^T\exp(y_{\setminus \beta}).
    \]
    Applying these observations to \eqref{eq:sep_hyp_expy_1} gives
    \begin{equation}\label{eq:sep_hyp_expy_2}
        0 \leq \left(\lambda_{\setminus \beta} \circ \exp(-\tilde{y}_{\setminus \beta})\right)^T\exp(y_{\setminus \beta}) - (\exp(-u-\tilde{y}_\beta))\exp(y_\beta).
    \end{equation}
    
    Inequality \eqref{eq:sep_hyp_expy_2} is essentially what we need to prove the lemma.
    Defining $z \in \R^\cA$ by $z_\alpha = \lambda_\alpha \exp(-\tilde{y}_\alpha)$ for $\alpha \neq \beta$ and $z_\beta = -\exp(-u-\tilde{y}_\beta)$, we have that $0 \leq z^T \exp y$ for all $\exp y \in C_X(\cA)^*$.
    As explained at the beginning of this proof, we appeal to continuity to establish $0 \leq z^T v$ for all $v \in C_X(\cA)^*$.
    One may use $\lambda_{\setminus \beta}^T 1 = 1$ to trivially evaluate $z^T \exp(\tilde{y}) = 1 - \exp(-u)$, and since $u < 0$ by assumption on $\tilde{y}$, we conclude $z^T\exp(\tilde{y}) < 0$.
\end{proof}

\begin{proof}[Proof of Theorem \ref{thm:polyhedra_reduced_x_circuits}]
    By Theorem \ref{thm:log_dual_x_sage_valid}, we have the dual description $C_X(\cA)^* = \cl \{ \exp y \, : \, (y,1) \in G_X(\cA)^*\}$.
    Applying Theorem \ref{thm:circuit_graph_generation} then gives
    \begin{align*}
      C_X(\cA)^* & = \cl \{ \exp y \, : \, \phi_\lambda(y) \geq 0 \; \: \forall \, \lambda
      \in \Lambda_X^\star(\cA) \}.
    \end{align*}
    We rewrite the condition on $\phi_\lambda(y)$ as a condition on $v = \exp y$ using the power-cone formulation in Proposition \ref{prop:weighted_dual_x_age_powercone}.
    Since $X$ is polyhedral, Theorem \ref{thm:polyhedron_x_finite_circuits} tells us there are finitely many normalized $X$-circuits $\Lambda_X(\cA)$.
    We may therefore express $C_X(\cA)^*$ as a finite intersection of dual $\lambda$-witnessed AGE cones,
    \[
      C_X(\cA)^* = \bigcap_{\lambda \in \Lambda_X^\star(\cA)}
      C_X(\cA,\lambda)^*.
    \]
    Moreover, each dual $\lambda$-witnessed AGE cone $C_X(\cA,\lambda)^*$ is an outer-approximation of the full-dimensional moment cone $\cone\{ \exp(\cA^T x) \,:\, x \in X \}$, hence there exists a point $v_0$ in the interior of the moment cone where $v_0 \in \interior C_X(\cA,\lambda)^*$ for all $\lambda \in \Lambda_X^\star(\cA)$.
    Therefore, by \cite[Corollary 16.4.2]{rockafellar-book} we have
    \[
        C_X(\cA) = (C_X(\cA)^*)^* = \sum_{\lambda \in \Lambda_X^\star(\cA)} (C_X(\cA,\lambda)^*)^* = \sum_{\lambda \in \Lambda_X^\star(\cA)} C_X(\cA,\lambda),
    \]
    which establishes the first part of the theorem.
    
    For the second part of the theorem, suppose $\Lambda$ is a proper subset of $\Lambda_X^\star(\cA)$.
    Consider the set $C = \sum_{\lambda \in \Lambda} C_X(\cA,\lambda)$ and its
    dual $C^* = \bigcap\{ C_X(\cA,\lambda)^* \,:\, \lambda \in \Lambda \}$.
    Clearly, since $C \subset C_X(\cA)$ we have $C^* \supset C_X(\cA)^*$ -- we will show that this containment is \textit{strict}, i.e., $C^* \supsetneq C_X(\cA)^*$.
    Once this is done, duality will tell us that $C \subsetneq C_X(\cA)$.
    
    Since $C$ is contained within the signomial nonnegativity cone we again have that $C^*$ contains the moment cone and so by Lemma \ref{lem:relint_containment} we have $C^* = \cl(C^* \cap \R^{\cA}_{++})$.
    Work with $C^*$ over the positive orthant using Proposition \ref{prop:weighted_dual_x_age_powercone} to express it as $C^* = \cl\{ \exp y \,:\, y \in Y\}$ for $Y \coloneqq \{ y \,:\, \phi_\lambda(y) \geq 0 \; \: \forall \, \lambda \in \Lambda \}$.
    By Lemma \ref{lem:separate_log_moment_by_circuit} there exists an element $\tilde{y} \in Y$ for which some $\lambda \in \Lambda_X^\star(\cA) \setminus \Lambda$ satisfies $\phi_\lambda(\tilde{y}) < 0$.
    Apply Lemma \ref{lem:exponentiate_separating_hyperplane} to this pair $(\phi_\lambda, \tilde{y})$ to see that $\exp \tilde{y}$ can be separated from the closed convex set $C_X(\cA)^*$.
    We have therefore found a point $\tilde{y}$ where $\exp \tilde{y} \in C^*$ and yet $\exp \tilde{y}$ can be separated from $C_X(\cA)^*$, so we conclude $C^* \supsetneq C_X(\cA)^*$.
\end{proof}

Before concluding this section we would like to point out a more general way to frame our analysis.
Given a pair $(\lambda,a) \in \R^m \times \R$ where $\lambda$ sums to zero and has exactly one negative component $\lambda_i = -1$, we have a power cone constraint $v_i \leq \exp(a) \prod_{j\neq i}v_j^{\lambda_j}$ which may be rewritten to $1 \leq \exp(a) v^{\lambda}$.
Given a \textit{set} of such pairs $P \subset \R^m \times \R$, we obtain the convex set
\[
    F(P) = \left\{ v \in \R^m_{+} \,:\, 1 \leq \exp(a) v^{\lambda} ~\forall~ (\lambda, a) \in P \right\}.
\]
We have effectively shown that if $K = \cone(P \cup \{(\zerob,1)\})$ is pointed and $F(P)$ intersects the positive orthant, then the unique minimum $P^\star \subset P$ for which $F(P^\star) = F(P)$ can be read off from the extreme rays of the polyhedral cone $K$.

\section{Extreme rays of half-line SAGE cones}\label{sec:primal_perspective}

In the previous section, we showed that by appropriate appeals to convex duality, one may derive representations of $C_X(\cA)$ with little to no redundancy. Here we build upon those results to completely characterize the extreme rays of the $X$-SAGE cone for the univariate case $X=[0,\infty)$.

\begin{proposition}\label{prop:extremalsupportsconicdim1_alt}
	For $\alpha_1 < \cdots < \alpha_m$, the extreme rays of $C_{[0,\infty)}(\{\alpha_1,\ldots,\alpha_m\})$ are:
	\begin{itemize}
		\item[(1)] $\R_+\cdot \exp(\alpha_1 x) $,
		\item[(2)] $\R_+\cdot \{\exp(\alpha_2 x)-\exp(\alpha_1 x)\}$,
		\item[(3)] $\R_+ \cdot \{c_{i+1} \exp(\alpha_{i+1}x)+
		c_{i} \exp(\alpha_i x)+c_{i-1}\exp(\alpha_{i-1}x) \, : \, 
		2 \le i \le m-1 \}$ with
		\[
		c_{i+1} > 0, \qquad c_{i-1} > 0, \quad \text{and} \quad c_{i} = - \left( \frac{c_{i-1}}{\lambda_{i-1}} \right)^{\lambda_{i-1}}
		\left( \frac{c_{i+1}}{\lambda_{i+1}} \right)^{\lambda_{i+1}},
		\]
		where
		\[
		\lambda_{i+1} = \frac{\alpha_i- 
			\alpha_{i-1}}{\alpha_{i+1}-\alpha_{i-1}}, \quad 
		\lambda_{i-1} = \frac{\alpha_{i+1}- \alpha_i}{\alpha_{i+1}-\alpha_{i-1}}, \quad\text{and}\quad \frac{c_{i-1}}{c_{i+1}} \geq \frac{\lambda_{i-1}}{\lambda_{i+1}}.
		\]
	\end{itemize}
\end{proposition}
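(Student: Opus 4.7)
The plan is to combine Theorem~\ref{thm:polyhedra_reduced_x_circuits} with the enumeration of reduced $[0,\infty)$-circuits from Example~\ref{ex:reduced_extreme_rays}, which yields the Minkowski decomposition
\[
C_X(\cA) \;=\; C_X(\cA,\delta_2-\delta_1) + \sum_{i=2}^{m-1} C_X(\cA,\lambda^{(i)}),
\]
where $\lambda^{(i)}$ is the reduced $\R$-circuit supported on $\{\alpha_{i-1},\alpha_i,\alpha_{i+1}\}$ with the weights displayed in the statement. Since every extreme ray of a Minkowski sum of cones is an extreme ray of some summand, the first step is to list the extreme rays of each $\lambda$-witnessed AGE cone using Proposition~\ref{prop:weighted_age_basic_powercone}: the polyhedral cone $C_X(\cA,\delta_2-\delta_1)$ has extreme rays $e^{\alpha_1 x}$, $e^{\alpha_2 x}-e^{\alpha_1 x}$, and $e^{\alpha_j x}$ for $j\ge 3$; each $C_X(\cA,\lambda^{(i)})$ has extreme rays consisting of the basis functions $e^{\alpha_j x}$ together with the three-term signomials saturating the power-cone inequality $c_i=-(c_{i-1}/\lambda_{i-1})^{\lambda_{i-1}}(c_{i+1}/\lambda_{i+1})^{\lambda_{i+1}}$ with $c_{i-1},c_{i+1}>0$.

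The next step is to prune the candidates by testing extremality in the sum. The basis functions $e^{\alpha_j x}$ with $j\ge 2$ are eliminated via the telescoping identity $e^{\alpha_j x}=(e^{\alpha_j x}-e^{\alpha_{j-1}x})+e^{\alpha_{j-1}x}$, whose first summand is AGE via the non-reduced $X$-circuit $\delta_j-\delta_{j-1}$ and hence in $C_X(\cA)$ by the decomposition in Example~\ref{ex:reduced_extreme_rays}. By contrast $e^{\alpha_1 x}$ and $e^{\alpha_2 x}-e^{\alpha_1 x}$ survive: starting from the observation that $\alpha_m$ is never of the form $\lambda^{-}$ for a reduced circuit, the sign constraints $c_k(f_\lambda)\ge 0$ for $k\ne \lambda^{-}$ force the reduced-circuit contributions at $\alpha_j$ to vanish iteratively for $j=m,m-1,\ldots,3$, reducing any purported SAGE decomposition to the two-dimensional polyhedral slice on $\{\alpha_1,\alpha_2\}$, where these two rays are vertices.

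For the three-term candidates from $\lambda^{(i)}$, the heart of the argument is recognizing that the ratio condition $c_{i-1}/c_{i+1}\ge \lambda_{i-1}/\lambda_{i+1}$ is equivalent to the unique zero of $g$ lying in $X$. Weighted AM-GM equality together with the $\R$-circuit relation $\lambda_{i-1}\alpha_{i-1}+\lambda_{i+1}\alpha_{i+1}=\alpha_i$ shows that $g$ has a double zero at
\[
x^\star=(\alpha_{i+1}-\alpha_{i-1})^{-1}\log\bigl((c_{i-1}\lambda_{i+1})/(c_{i+1}\lambda_{i-1})\bigr),
\]
and $x^\star\ge 0$ is precisely the ratio condition. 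When the condition fails I would exhibit the non-proportional decomposition $g=g'+\mu(e^{\alpha_{i+1}x}-e^{\alpha_i x})$ with $\mu>0$ chosen maximally so that $g'\in C_X(\cA,\lambda^{(i)})$; a direct power-cone calculation (generalizing $(1-2e^x)^2=(1-e^x)^2+(3e^{2x}-2e^x)$) confirms $\mu>0$ exactly when $x^\star<0$, and the second summand lies in $C_X(\cA)$ via the non-reduced $X$-circuit $\delta_{i+1}-\delta_i$.

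When the ratio condition holds, any decomposition $g=g_1+g_2$ in $C_X(\cA)$ forces $g_l(x^\star)=g_l'(x^\star)=0$ by nonnegativity, and the main obstacle is to conclude from this double vanishing that each $g_l$ is proportional to $g$. My strategy is to expand each $g_l$ via the reduced-circuit Minkowski decomposition and argue that each AGE summand vanishing to second order at $x^\star$ is either proportional to $g$ or identically zero: an AGE function from $\lambda^{(j)}$ with $j\ne i$ has its own circuit-specific double-zero location determined by its own coefficient ratio, incompatible with $x^\star$ unless the piece is zero, and the $\delta_2-\delta_1$ piece cannot vanish at any $x^\star>0$ without being trivial. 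Handling the overlap of supports between $\lambda^{(i)}$ and its neighbors $\lambda^{(i\pm 1)}$, together with the boundary case $x^\star=0$, is the most technical part of the argument.
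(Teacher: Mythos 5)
Your proposal follows essentially the same route as the paper: it combines Theorem~\ref{thm:polyhedra_reduced_x_circuits} with the reduced-circuit enumeration of Example~\ref{ex:reduced_extreme_rays}, prunes the monomials by telescoping, and characterizes the surviving three-term functions via the location $x^\star$ of the double zero and the ratio condition, exactly as in the paper's proof. The only differences are cosmetic: you argue extremality of the one- and two-term candidates by a support/sign iteration where the paper uses leading-term asymptotics, and the subcases you flag as the remaining technical work (sums consisting solely of three-term pieces, and the boundary case $x^\star=0$) are disposed of in the paper by citing \cite[Proposition 4.4]{Katthaen:Naumann:Theobald:UnifiedFramework} and by the stationarity condition $f'(0)=0$.
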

\begin{proof}
    Let $\cA = \{\alpha_1,\ldots,\alpha_m\}$.
	By Theorem \ref{thm:polyhedra_reduced_x_circuits}, all edge generators of $C_{[0,\infty)}(\cA)$ are either monomials or $\lambda$-witnessed AGE functions where $\lambda$ is a reduced $[0,\infty)$-circuit.
	By Example \ref{ex:reduced_extreme_rays}, $\Lambda_{[0,\infty)}^\star(\cA) = \{\delta_2 - \delta_1\} \cup \Lambda_{\R}^\star(\cA)$.
	Since $n=1$, Proposition \ref{prop:x_circuit_aff_indep} says all circuits $\lambda$ have $|\supp \lambda| \leq 3$.
	We therefore divide the proof into considering cases of monomials, and $X$-AGE functions with two or three terms.
	
	First we address the monomials.
	Given $f(x) = \exp(\alpha_i x)$ with $i > 1$, we can write $f = f_1 + f_2$ with $f_1(x) = \exp(\alpha_{i}x) - \exp(\alpha_{i-1}x)$ and $f_2(x) = \exp(\alpha_{i-1}x)$ -- 
	the summand $f_1$ is nonnegative on $[0,\infty)$ because $\alpha_i > \alpha_{i-1}$, and $f_2$ is globally nonnegative.
	Therefore the only possible extremal monomial in $C_{[0,\infty)}(\cA)$ is $f(x) = \exp(\alpha_1 x)$.
	Since $X = [0,\infty)$, the leading term of any $g \in C_X(\cA)$ must have positive coefficient.
	Moreover, if $g$ is not proportional to $f$, the leading term of $g$ must have exponent greater than $\alpha_1$.
	Therefore any convex combination of AGE functions $g \in C_{[0,\infty)}(\cA)$ which are not proportional to $f$ must disagree with $f(x)$ in the limit as $x$ tends to infinity. 
	We conclude $f$ is extremal in $C_{[0,\infty)}(\cA)$.
	
	Now we consider the 2-term case, where, by
	Example \ref{ex:reduced_extreme_rays}, we have to consider signomials of the
	form
	$f(x) = c_2 \exp(\alpha_2 x) - c_1 \exp(\alpha_1 x)$. We observe that $f$
	is nonnegative on $[0,\infty)$ if and only if $c_2 \geq c_1 \geq 0$, and furthermore that such signomials are nonextremal unless $c_1 = c_2$. 
    To see that $f(x) = \exp(\alpha_2 x) - \exp(\alpha_1 x)$ is indeed extremal, note that $f$ cannot be written as a convex combination involving any 3-term AGE functions, because any conic combination of 3-term AGE functions has a leading term with positive coefficient on $\exp(\alpha_i x)$ for some $i \geq 3$.
	
	We have already proven cases (1) and (2) of the proposition. 
	Using Example \ref{ex:reduced_extreme_rays}, we know that any extremal 3-term $X$-AGE function belongs to a $\lambda$-witnessed AGE cone where $\lambda$ is a reduced $\R$-circuit.
	These reduced $\R$-circuits have the property $\supp \lambda = \{i-1,i,i+1\}$ $\alpha_{i-1}\lambda_{i-1}+\alpha_{i+1}\lambda_{i+1} = \alpha_i$, $\lambda_i = -1$.
	Any $X$-AGE function with such a witness is nonnegative on all of $\R$.
	Therefore any 3-term $X$-AGE function $f$ that is \textit{extremal} in $C_{[0,\infty)}(\cA)$ is also extremal in $C_{\R}(\cA) \subset C_{[0,\infty)}(\cA)$, which (by  \cite[Prop. 4.4]{Katthaen:Naumann:Theobald:UnifiedFramework}) implies
	\begin{equation}\label{eq:univariate_extremal_age_sig}
	    f(x) = c_{i+1} \exp({\alpha_{i+1} x}) - \left(\left[\frac{c_{i+1}}{\lambda_{i+1}}\right]^{\lambda_{i+1}}\left[\frac{c_{i-1}}{\lambda_{i-1}}\right]^{\lambda_{i-1}}\right)\exp(\alpha_i x) +  c_{i-1} \exp(\alpha_{i-1} x).
	\end{equation}
	
	We have arrived at the final phase of proving part (3) of this proposition.
	By the equality case in the AM/GM inequality and using $\exp(\alpha_i x) = \left(\exp(\alpha_{i+1} x)^{\lambda_{i+1}}\right)\left(\exp(\alpha_{i-1} x)^{\lambda_{i-1}}\right)$, one finds the unique minimizer $x^\star$ for functions \eqref{eq:univariate_extremal_age_sig} satisfies
	\[
	\left[\frac{c_{i+1}\exp(\alpha_{i+1} x^\star)}{\lambda_{i+1}}\right] = \left[\frac{c_{i-1} \exp(\alpha_{i-1} x^\star)}{\lambda_{i-1}}\right] \quad\Leftrightarrow\quad x^\star = \ln\left(\frac{c_{i-1}}{c_{i+1}}\frac{\lambda_{i+1}}{\lambda_{i-1}}\right)/(\alpha_{i+1} - \alpha_{i-1}).
	\]
	If $V_i(\lambda,c) \coloneqq (c_{i-1}\lambda_{i+1})/(c_{i+1}\lambda_{i-1})$ satisfies $V_i(\lambda,c) < 1$, then $x^\star < 0$ and by continuity we have $\inf\{ f(x) \,:\, x \geq 0 \} > 0$ -- hence the condition $V_i(\lambda,c) \geq 1$ is necessary for extremality.
	Furthermore, if $V_i(\lambda,c) > 1$, then the unique minimizer of $f$ given by \eqref{eq:univariate_extremal_age_sig} occurs at $x^\star > 0$.
	Such $f$ cannot be decomposed as a convex combination which involves 1-term or 2-term AGE functions (which have $f(x) > 0$ for $x > 0$), and cannot be written as a convex combination consisting solely of 3-term AGE functions \cite[Proposition 4.4]{Katthaen:Naumann:Theobald:UnifiedFramework}, therefore any $f$ given by \eqref{eq:univariate_extremal_age_sig} with $V_i(\lambda,c) > 1$ is extremal in $C_{[0,\infty)}(\cA)$.
	All that remains is to show extremality of functions \eqref{eq:univariate_extremal_age_sig} with $V_i(\lambda,c) = 1$. This follows from the same argument as $V_i(\lambda,c) > 1$, but we must use the stationarity condition $f'(0) = 0$ to preclude using 2-term extremal AGE functions in a decomposition of $f$.
\end{proof}

\section{Discussion and Conclusion}
\label{se:discussion}

In this article we have introduced a convex-geometric notion of an $X$-circuit, which mediates a relationship between point sets $\cA \subset \R^n$ and convex sets $X \subset \R^n$.
By showing that this notion of an $X$-circuit allows an alternative construction of $X$-SAGE cones (Theorems \ref{thm:primal_x_age_powercone} and \ref{thm:reduced_x_circuits}) which cannot be relaxed (Theorem \ref{thm:polyhedra_reduced_x_circuits}), we have demonstrated that conditional SAGE cones exhibit a substantially richer theory than ordinary SAGE cones.
An essential property of this theory is that for general sets $X$ it is not possible to recover an $X$-circuit $\lambda \in \Lambda_X(\cA,\beta)$ given only information on the signs of its components.
As a consequence of this last point -- it is not possible to arrive at the concept of conditional SAGE certificates while relying on a ``circuit number'' approach using only the support of a given polynomial or signomial.

Two lines of theoretical investigations stand out for future work.
First, there is the task of formally situating $X$-circuits in the context of matroid theory (in the case when $X$ is a polyhedron).
Here one can use an interpretation from Theorem \ref{thm:polyhedron_x_finite_circuits}, that $X$-circuits $\lambda \in \Lambda_X(\cA,\beta)$ are outer normal vectors to facets of $-\cA^T X +  N_\beta^\circ$.
A broader area of follow-up work is in-depth analysis of multiplicatively-convex sets $S \subset \R^m_+$ for which $\log(S) = \{ t \,:\, \exp t \in S \}$ is convex.
Some properties of this class of sets include closure under intersection, and closure under the induced-cone operation.

It is of interest to explore the use of the cones $C_X(\cA,\lambda)$ when $\lambda$ is not an $X$-circuit.
Given a signomial $\sum_{\alpha\in\cA}c_{\alpha}\e^{\alpha}$ with numerical $X$-SAGE certificate $\{(c^{(\beta)},\nu^{(\beta)})\}_{\beta\in\cA}$, $c^{(\beta)}\in C_X(\cA,\beta)$, $c \approx \sum_{\beta\in\cA} c^{(\beta)}$, one could refine this certificate to higher precision by solving the power-cone program to decompose $c$ as a sum of vectors in $C_X(\cA,\lambda^{(\beta)})$ for $\lambda^{(\beta)} = \nu^{(\beta)}/|\nu^{(\beta)}_{\beta}|$. 
This would be helpful for large scale problems where $\{(c^{(\beta)},\nu^{(\beta)})\}_{\beta\in\cA}$ is computed with a first-order solver, or when $X$ is an especially complicated spectrahedron.
In the latter case, the standard description of $C_X(\cA)$ would be a mixed semidefinite and relative entropy program, while the formulations for $C_X(\cA,\lambda^{(\beta)})$ would be pure power cone programs. 

The two obstacles to using Theorem \ref{thm:polyhedra_reduced_x_circuits} in computation are that $|\Lambda_X^\star(\cA)|$ can be exponential in $|\cA|$ even when $X=\R^n$, and that finding $X$-circuits requires a procedure to identify extreme rays of a polyhedral cone.
It is not known how severe this first problem is in practice.
For the second problem one could focus on $X$-SAGE polynomials where $X=[-1,1]^n$ or $X=[0,1]^n$.
The cones of such polynomials on $\cA \subset \N^n$ are represented by $C_Y(\cA)$ for $Y = \{ y \in \R^{n} : y \leq \zerob \}$, and finding $\Lambda_Y^\star(\cA)$ is made easier by the fact that $Y$ is a cone. 
The main benefit of this approach for polynomials is the prospect of computing conditional SAGE decompositions in exact arithmetic, especially for sparse polynomials of high degree.

We conclude by noting that although the ``pure'' conditional SAGE methodology is used only for convex constraint sets, additional nonconvex constraints can be accommodated with algebraic techniques.
This can partly be seen in the original work of Chandrasekaran and Shah \cite{chandrasekaran-shah-2016} and more so in the recent work \cite{dressler-murray-2021}.

\bibliography{bibConstrainedSage}
\bibliographystyle{plain}

\section{Appendix}

\subsection{Propositions regarding convex analysis}

The following pair of propositions are used in the proofs of
Theorem~\ref{thm:primal_x_age_powercone}
and Lemma~\ref{lem:circuit_graph_closed}.

\begin{proposition}\label{prop:misc:powercone_relentr}
For fixed $\lambda$ in the interior of the $m$-dimensional probability simplex and $c = (c_0,c_1,\ldots,c_m) \in \R^{m+1}$ with $(c_1,\ldots,c_m) \geq \zerob$, we have
\[
-c_0 \leq \prod_{i=1}^m \left[ c_i / \lambda_i  \right]^{\lambda_i} \quad \Leftrightarrow \quad \text{ some } \nu \in \R^m_+ \text{ satisfies } \nu \parallel \lambda \text{ and } \relentr(\nu,c_{\setminus 0}) - \mathds{1}^T \nu \leq c_0
\]
-- where $\nu \parallel \lambda$ means $\nu$ is proportional to $\lambda$.
\end{proposition}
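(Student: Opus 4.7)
The plan is to reduce both sides of the equivalence to the same one-variable inequality via the parametrization $\nu = s\lambda$ with $s \geq 0$, which is forced by $\nu \geq \zerob$, $\nu \parallel \lambda$, and the assumption that $\lambda$ lies in the interior of the probability simplex.

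First I would substitute $\nu = s\lambda$ into the functional $\nu \mapsto \relentr(\nu, c_{\setminus 0}) - \mathds{1}^T \nu$ and simplify. Using $\sum_{i=1}^m \lambda_i = 1$ and $\mathds{1}^T \nu = s$, one obtains
\begin{equation*}
  \relentr(s\lambda, c_{\setminus 0}) - s \;=\; s \log s - s - s a, \qquad \text{where } a \coloneqq \sum_{i=1}^m \lambda_i \log(c_i/\lambda_i).
\end{equation*}
Thus the right-hand side of the equivalence is exactly the solvability (in $s \geq 0$) of $g(s) \leq c_0$ with $g(s) = s\log s - s - sa$.

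Second, I would minimize the convex function $g$ on $[0,\infty)$. A direct differentiation gives $g'(s) = \log s - a$, so the unique minimizer is $s^\star = e^a$ with $g(s^\star) = -e^a$. Hence a feasible $s$ exists if and only if $-e^a \leq c_0$, and exponentiating the identity $e^a = \prod_{i=1}^m (c_i/\lambda_i)^{\lambda_i}$ shows this is exactly the left-hand side $-c_0 \leq \prod_{i=1}^m [c_i/\lambda_i]^{\lambda_i}$.

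Finally I would handle the boundary cases where some $c_i = 0$ for $i \in \{1,\dots,m\}$ using the standard continuous extension of relative entropy. In that case the product on the left side vanishes, so the left-hand inequality reduces to $c_0 \geq 0$; on the right-hand side any $s > 0$ forces $\relentr(s\lambda, c_{\setminus 0}) = +\infty$ (since $\lambda_i > 0$), so only $s = 0$ is admissible, and the inequality again reduces to $c_0 \geq 0$. The main point requiring care is precisely this boundary bookkeeping together with the interiority assumption on $\lambda$, which guarantees the parametrization $\nu = s\lambda$ captures all $\nu \in \R^m_+$ proportional to $\lambda$; the rest is a short calculus exercise.
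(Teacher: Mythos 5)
Your proposal is correct and follows essentially the same route as the paper: restrict to $\nu = s\lambda$, reduce to the scalar convex function $s \mapsto \relentr(s\lambda,c_{\setminus 0}) - s$, and observe that its minimum over $s \geq 0$ equals $-\prod_{i=1}^m [c_i/\lambda_i]^{\lambda_i}$. Your explicit treatment of the boundary case $c_i = 0$ (and of $s=0$) is slightly more careful than the paper's, which simply dismisses $c_0 \geq 0$ as trivial, but the substance is identical.
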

\begin{proof}
The claim is trivial when $c_0 \geq 0$, and so we consider $c_0 < 0$. Note that in this case, $\prod_{i=1}^m \left[ c_i / \lambda_i \right]^{\lambda_i}$ must be positive, and $\relentr(\nu,c_{\setminus 0})$ must be finite: both of these conditions occur precisely when $c_i > 0$ for all $1 \leq i \leq m$. We therefore can rewrite $-c_0 = |c_0 | \leq \prod_{i=1}^m \left[ c_i / \lambda_i \right]^{\lambda_i}$ as $1 \leq \prod_{i=1}^m \left[ c_i / (|c_0|\lambda_i) \right]^{\lambda_i}$, and by taking the log of both sides, obtain $\relentr(\nu,c_{\setminus 0}) - \mathds{1}^T \nu \leq c_0$ for $\nu = |c_0|\lambda$. For the other direction, one may write the proportionality relationship $\nu \parallel \lambda$ as $\nu = s \lambda$, and minimize $\relentr(s \lambda,c_{\setminus 0}) - s$ over $s \geq 0$ to obtain $-\prod_{i=1}^m \left[ c_i / \lambda_i  \right]^{\lambda_i}$.
\end{proof}

\begin{proposition}\label{prop:compactly_generated_pointed_cones_closed}
    Suppose $S \subset \R^{m} \setminus \{\zerob\}$ is compact (not necessarily convex) and set $T = \cone S$.
    If it is known a-priori that $\cl T$ contains no lines, then $T = \cl T$ is closed.
\end{proposition}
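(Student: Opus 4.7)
The plan is to reduce to a compact convex generating set and then verify closedness by an elementary sequential compactness argument, with the no-lines hypothesis entering at only one point, namely to rule out an unbounded scaling.

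First I would pass from $S$ to $K \coloneqq \conv S$. Because $S \subset \R^m$ is compact, $K$ is compact (Carath\'eodory's theorem plus continuity of convex combinations of at most $m+1$ points), and clearly $T = \cone S = \cone K$. So every $t \in T$ has the form $t = \lambda c$ for some $\lambda \geq 0$ and $c \in K$.

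The key intermediate claim is that $\zerob \notin K$. Suppose toward contradiction that $\zerob = \sum_{i=1}^{k}\mu_i s_i$ with $\mu_i > 0$ and $s_i \in S$. Since $\zerob \notin S$ we must have $k \geq 2$, and rearranging yields $-s_1 = (1/\mu_1)\sum_{i \geq 2}\mu_i s_i \in T$. Together with $s_1 \in T$, this forces the convex cone $T$ (and hence $\cl T$) to contain the entire line $\R s_1$, contradicting the hypothesis. This is the one and only place the no-lines assumption is used, and I expect it to be the main obstacle: it is where the qualitative global hypothesis on $\cl T$ has to be converted into a combinatorial statement about convex combinations of elements of $S$.

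Closedness of $T$ then follows from sequential compactness. Given a convergent sequence $t_k = \lambda_k c_k \to t^\star$ with $\lambda_k \geq 0$ and $c_k \in K$, either $\{\lambda_k\}$ is bounded, in which case a subsequence has $\lambda_k \to \lambda^\star \geq 0$ and $c_k \to c^\star \in K$ by compactness, giving $t^\star = \lambda^\star c^\star \in T$; or $\lambda_k \to \infty$ along a subsequence, in which case $c_k = t_k / \lambda_k \to \zerob$, contradicting the strict positivity of $\inf_{c \in K}\|c\|$ ensured by compactness of $K$ and $\zerob \notin K$. Once the intermediate claim is in hand, this closure step is routine.
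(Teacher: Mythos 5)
Your proof is correct, and it takes a somewhat different route from the paper's. The paper exploits pointedness of $\cl T$ by producing a vector $t^\star$ with $(t^\star)^T t > 0$ on $(\cl T)\setminus\{\zerob\}$, slices $T$ with the hyperplane $(t^\star)^T t = 1$ to get a bounded generating set $H$, and then proves $H$ closed by a Carath\'eodory decomposition into elements of $S^m$ with coefficients bounded by $1/\min_{s\in S}(t^\star)^T s$. You instead convexify: you pass to the compact set $K = \conv S$ and convert the no-lines hypothesis into the combinatorial statement $\zerob \notin K$, via the observation that a vanishing convex combination of points of $S$ forces $\pm s_1 \in T$ and hence a line in $T \subseteq \cl T$. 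This is a clean reduction, and once $\zerob\notin K$ is known your sequential argument (bounded $\lambda_k$ versus $\lambda_k\to\infty$ forcing $c_k\to\zerob$) finishes it; alternatively you could at that point simply cite \cite[Corollary 9.6.1]{rockafellar-book}, the same fact the paper invokes elsewhere, since the conic hull of a compact convex set not containing the origin is closed. A minor remark: your argument only uses that $T$ itself contains no lines, which is formally weaker than the stated hypothesis on $\cl T$; that is harmless (you prove a slightly stronger statement), and it buys a proof in which the geometric hypothesis enters at exactly one, easily auditable, point.
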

\begin{proof}
    Since $\cl T$ is pointed, there exists a distinguished element $t^\star \in T$ for which $(t^\star)^T t > 0$ for all $t \in (\cl T) \setminus \{\zerob\}$.
    Consider the set $H = \{ t \in T \,:\, (t^{\star})^T t = 1 \}$ -- it is clear that $H$ is bounded, $\cone H = T$, and $0 \not\in H$.
    If $H$ is closed, then by \cite[Corollary 9.6.1]{rockafellar-book} we will have that $\cone H = T$ is also closed.
    We show that $H$ is closed by directly considering sequences in $H$.
    We express these sequences with the help of the $m$-fold Cartesian product $S^{m} = S \times \cdots \times S$.
    
    Let $(h^{(k)})_{k \in \N} \subset H$ have a limit in $\R^m$.
    Since $H$ is of dimension at most $m-1$ and is generated by $S$, Carath\'eodory's Theorem tells us that there exists a vector $\lambda^{(k)} \in \R^m_+$ and a block vector $q^{(k)} = (s^{(k)}_1,\ldots,s^{(k)}_m) \in S^m$ where
    \[
     h^{(k)} = \textstyle\sum_{i=1}^m\lambda^{(k)}_i s^{(k)}_i.
    \]
    Since $S$ is compact, the continuous function $s \mapsto (t^\star)^T s$ attains a minimum on $s^\star \in S$ -- since $S$ does not contain zero, we have that $(t^\star)^T(s^\star) = a > 0$.
    It follows that each $\lambda^{(k)}_i$ appearing in the expression for $h^{(k)}$ is bounded above by $1/a < \infty$.
    The sequences $(\lambda^{(k)})_{k \in \N} \subset [0,1/a]^m$ and $(q^{(k)})_{k \in \N} \subset S^m$ are bounded, and therefore $((\lambda^{(k)},q^{(k)}))_{k \in \N}$ has a convergent subsequence.
    The limits $\lambda^{(\infty)}$ and $q^{(\infty)}$ of these convergent subsequences must belong to $[0,1/a]^m$ and $S^m$, respectively.
    By continuity, we have
    \[
    h^{(\infty)} \coloneqq \lim_{k\to\infty} h^{(k)} = \textstyle\sum_{i=1}^m\lambda^{(\infty)}_i s^{(\infty)}_i,
    \]
    hence $h^{(\infty)} \in H$.
    Since we have shown that all convergent sequences in $H$ converge to a \textit{point in $H$}, we have that $H$ is closed.
\end{proof}

Our next proposition is provided for the reader's convenience.
\begin{proposition}\label{prop:convex_analysis:dual_of_linear_image}
    Let $X \subset \R^n$ be a convex cone and consider a matrix $A$ in $\R^{n \times m}$.
    We have $(A^T X)^* = \ker A + A^\dagger X^*$, where $A^\dagger \in \R^{m \times n}$ is the Moore-Penrose pseudo-inverse of $A$.
\end{proposition}
\begin{proof}
    Because $A^T X$ is contained in the subspace $\range A^T$, its dual cone is invariant under translation by vectors in the orthogonal complement $(\range A^T)^\perp = \ker A$.
    In particular, $(A^T X)^* = \ker A + K$ for a convex cone $K \subset \range A^T$.
    We need to show that $K = A^\dagger X^*$.
    
    The definition of the Moore-Penrose pseudo-inverse ensures that $y \in \range A^T$ holds if and only if $A^\dagger A y = y$.
    We can therefore compute $K$ as follows
    \begin{align*}
        K &= \{ y \in \R^m \,:\, y^T z \geq 0 \,\forall\, z \in A^T X \} \cap \{ y \,:\, A^\dagger A y = y\} \\
          &= \{ A^\dagger A y \,:\, (A y)^T x \geq 0 \,\forall\, x \in X \} \cap \{ y \,:\, A^\dagger A y = y\} \\
          &= \{ A^\dagger w \,:\, w \in\R^n, w^T x \geq 0 \,\forall\, x \in X \} \cap (\range A^T) \\
          &= \{ A^\dagger w \,:\, w \in X^* \}.
    \end{align*}
    The transitions from line to line are as follows.
    First, substitute $A^\dagger A y$ for $y$, express $z = A^T x$ for some $x \in X$, and rewrite $y^T (A^T x) = (A y)^T x$.
    Then, substitute $w \coloneqq A y$ and simplify the expression for the range of $A^T$.
    Finally, apply the definition of the dual cone $X^*$ and use the pseudo-inverse identity $A^\dagger A A^\dagger x = A^\dagger x$ for all $x \in \R^n$.
\end{proof}

\subsection{Definitions from convex analysis}

A \emph{face} of a convex set $S \subset \R^n$ is any closed convex $F \subset S$ with the following property: if the line segment $[s_1,s_2] \coloneqq \{ \lambda s_1 + (1-\lambda) s_2 \,:\, 0 \leq \lambda \leq 1\}$ is contained in $S$ and the relative interior of $[s_1,s_2]$ hits $F$, then the entirety of $[s_1,s_2]$ is contained in $F$.
The \emph{dimension} $\dim S$ of a convex set $S$ is the dimension of the smallest affine space containing $S$.
Every nonempty convex set $S$ has a nonempty \textit{relative interior} $\relint S$, which is the interior of $S$ under the topology induced by its affine hull.
A set $K \subset \R^n$ is called a \emph{cone} if it is closed under dilation: $\{ \lambda x \,:\, x \in K\} \subset K$ for all $\lambda > 0$.
The \emph{extreme rays} of a pointed convex cone $K$ are its faces of dimension one.
To any convex cone $K$ we associate the \emph{dual cone} $K^* \coloneqq \{ y\,:\, y^T x \geq 0 \,\forall\,x\in K\}$ and the \textit{polar} $K^\circ = -K^*$.
The \emph{conic hull} of a set $S$, denoted $\cone S$, is the set formed by adjoining the origin to the smallest convex cone containing $S$.

\end{document}